\newtheorem{theorem}{Theorem}
\newtheorem{corollary}[theorem]{Corollary}
\newtheorem{proposition}[theorem]{Proposition}
\newtheorem{lemma}[theorem]{Lemma}
\theoremstyle{definition}
\newtheorem{remark}{Remark}
\newtheorem*{conjecture}{Cartan-Hadamard conjecture in dimension $ \boldsymbol{n} $}
\numberwithin{theorem}{section}
\numberwithin{remark}{section}
\numberwithin{equation}{section}
\newcommand{\R}{\mathbb{R}}
\newcommand{\N}{\mathbb{N}}
\renewcommand{\div}{\,{\rm div}\,}
\newcommand{\dist}{{\rm dist}}
\newcommand{\eps}{\varepsilon}
\newcommand{\beq}{\begin{equation}}
\newcommand{\eeq}{\end{equation}}
\DeclareMathOperator{\loc}{loc}
\newcommand{\Mb}{{\mathbb{M}}}
\title[Rigidity results for Sobolev inequalities and related PDEs on CH manifolds]{Some rigidity results for Sobolev inequalities \\ and related PDEs on Cartan-Hadamard manifolds}
\author[M. Muratori and N. Soave]{Matteo Muratori and Nicola Soave}\thanks{}
\address{Matteo Muratori and Nicola Soave \newline \indent
	Dipartimento di Matematica,  Politecnico di Milano  \newline \indent
	Via Edoardo Bonardi 9, 20133 Milano, Italy}
\email{matteo.muratori@polimi.it; nicola.soave@polimi.it}
\keywords{Cartan-Hadamard manifolds; Sobolev inequality; Rigidity results; $p$-Laplace equation.}
\subjclass[2020]{Primary: 58J05; 35B53; 35J92. Secondary: 58J70, 46E35}
\begin{document}

\begin{abstract}
The Cartan-Hadamard conjecture states that, on every $n$-dimensional Cartan-Hadamard manifold $ \Mb^n $, the isoperimetric inequality holds with Euclidean optimal constant, and any set attaining equality is necessarily isometric to a Euclidean ball. This conjecture was settled, with positive answer, for $n \le 4$. It was also shown that its validity in dimension $n$ ensures that every $p$-Sobolev inequality ($ 1 < p < n $) holds on $ \Mb^n $ with Euclidean optimal constant. In this paper we address the problem of classifying all Cartan-Hadamard manifolds supporting an optimal function for the Sobolev inequality. We prove that, under the validity of the $n$-dimensional Cartan-Hadamard conjecture, the only such manifold 
is $ \R^n $, and therefore any optimizer is an Aubin-Talenti profile (up to isometries). In particular, this is the case in dimension $n \le 4$. 

Optimal functions for the Sobolev inequality are weak solutions to the critical $p$-Laplace equation. Thus, in the second part of the paper, we address the classification of radial solutions (not necessarily optimizers) to such a PDE. Actually, we consider the more general critical or supercritical equation
\[
-\Delta_p u = u^q \, , \quad u>0 \, , \qquad \text{on } \Mb^n \, ,
\]
where $q \ge p^*-1$. We show that if there exists a radial finite-energy solution, then $\Mb^n$ is necessarily isometric to $\R^n$, $q=p^*-1$ and $u$ is an Aubin-Talenti profile. Furthermore, on model manifolds, we describe the asymptotic behavior of radial solutions not lying in the energy space $\dot{W}^{1,p}(\Mb^n)$, studying separately the $p$-stochastically complete and incomplete cases.
\end{abstract}

\maketitle

\section{Introduction and main results}
\normalcolor 
Given an integer $n \ge 2$ and $ p \in [1,n) $, it is well known that, on any $ n $-dimensional Cartan-Hadamard manifold $ \Mb^n $ (that is a complete and simply connected Riemannian manifold with nonpositive sectional curvature) the Sobolev inequality
\begin{equation}\label{generic-sobolev-gen}
\left\| f \right\|_{L^{p^\ast}(\Mb^n)}  \le C_{n,p} \left\| \nabla f \right\|_{L^p\left( \Mb^n \right)} \quad \forall f \in \dot{W}^{1,p}(\Mb^n) \, , \qquad p^\ast := \frac{np}{n-p}   \, ,
\end{equation}
holds, with a constant $C_{n,p}>0 $ that depends only on $n $ and $ p $ (see for example \cite[Lemma 8.1 and Theorem 8.3]{Hebey}). Here $ \dot{W}^{1,p}(\Mb^n) $ denotes the closure of $ C^1_c(\Mb^n) $ with respect to $  \| \nabla(\cdot) \|_{L^{p}(\Mb^n)} $. In particular, for $ p=1 $ we have
\begin{equation*}\label{sobolev-one}
\left\| f \right\|_{L^{\frac{n}{n-1}}\!(\Mb^n)}  \le C_{n,1} \left\| \nabla f \right\|_{L^1\left( \Mb^n \right)} \qquad \forall f \in \dot{W}^{1,1}(\Mb^n) \, ,
\end{equation*}
which by standard approximation arguments turns out to be equivalent to the \emph{isoperimetric inequality}
\begin{equation}\label{sobolev-isop}
\mathrm{Per}(\Omega) \ge \frac{1}{C_{n,1}} \, [V(\Omega)]^{\frac{n-1}{n}} \, ,
\end{equation}
where $ \Omega \subset \Mb^n $ is an arbitrary \emph{bounded} measurable set, $ dV $ stands for the volume measure on $\mathbb{M}^n$, and
$$
\mathrm{Per}(\Omega) := \sup \left\{  \int_{\Mb^n} \chi_{\Omega} \, \operatorname{div} \Phi \, dV : \ \Phi \in C^1_c(\Mb^n ; T\Mb^n) \, , \ \left\| \Phi \right\|_{L^\infty(\Mb^n)} \le 1 \right\} 
$$
is the perimeter function induced by $ dV $. 

\smallskip
A first nontrivial question concerns the exact value of the \emph{optimal constant} in \eqref{generic-sobolev-gen}, namely the smallest constant for which the inequality is true. That such constant must be larger than or equal to the Euclidean one is a standard fact due to the infinitesimally Euclidean structure of any (smooth) Riemannian manifold. Whether it is \emph{equal} to the latter is a much harder problem; the special case $ p=1 $ is known in the literature as the \emph{Cartan-Hadamard conjecture}, that we now recall. 

\begin{conjecture} \it
	Let $\mathbb{M}^n$ be an $n$-dimensional Cartan-Hadamard manifold. Then the \emph{Euclidean isoperimetric inequality} holds on $\mathbb{M}^n$, that is for every bounded measurable set $\Omega \subset \mathbb{M}^n$ it holds that
	\begin{equation}\label{ispo-ineq}
	\mathrm{Per}(\Omega) \ge n \,  \omega_n^{\frac1n} \, [V(\Omega)]^{\frac{n-1}{n}} \, ,
	\end{equation}
where $ \omega_n $ is the volume of the unit ball in $ \R^n $. Furthermore, equality holds if and only if $\Omega$ is isometric to a ball in $\R^n$ (up to a set of volume zero).
\end{conjecture}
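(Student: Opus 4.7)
The plan is to follow the classical \emph{symmetrization plus curvature comparison} scheme that has successfully resolved the conjecture in dimensions $n\le 4$. First, by a standard approximation argument (regularizing $\chi_\Omega$ via mollification and applying the coarea formula), I reduce to the case where $\Omega$ is a smooth bounded open set with smooth boundary, so that $\mathrm{Per}(\Omega)=\mathcal{H}^{n-1}(\partial\Omega)$. By the Cartan-Hadamard theorem, $\exp_o$ is a global diffeomorphism at every $o\in\mathbb{M}^n$; in geodesic polar coordinates the pulled-back metric satisfies Rauch/Hessian comparison estimates against the Euclidean model, with inequalities reversed relative to the positively curved case. This sets up all the ingredients needed to compare with $\mathbb{R}^n$.

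Second, and this is the heart of the matter, one must show that the perimeter-to-volume ratio in $\mathbb{M}^n$ cannot improve over the Euclidean one. In dimension $n=2$ it follows from \emph{Gauss-Bonnet}: for any smooth simply connected $\Omega$,
\[\int_\Omega K\,dA+\int_{\partial\Omega}\kappa_g\,ds=2\pi\,,\]
and $K\le 0$ forces $\int_{\partial\Omega}\kappa_g\,ds\ge 2\pi$; combining this with Cauchy-Schwarz and the first variation of arclength along the equidistants from $\partial\Omega$ yields \eqref{ispo-ineq}. For $n=3$ I would follow Kleiner's strategy, controlling the total mean curvature of $\partial\Omega$ through the Gauss equation and the sign of $\sec$ on $\mathbb{M}^3$. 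For $n=4$ I would rely on integral-geometric Santal\'{o}-type identities on the unit tangent bundle, in the spirit of Croke, together with a careful estimate of the total scalar curvature of the boundary hypersurface.

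Third, the rigidity statement would follow by tracing back the equality cases at each step: equality in the Gauss-Bonnet / curvature comparison forces $\sec\equiv 0$ along the foliation of $\Omega$ by equidistants from $\partial\Omega$, while equality in Heintze-Karcher / Hessian comparison forces $\partial\Omega$ to be totally umbilical with exactly the second fundamental form of a Euclidean sphere. Combining these rigidity statements one constructs an explicit isometry between $\Omega$ and a ball of $\mathbb{R}^n$.

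The main obstacle, and the reason the statement is still only a conjecture for $n\ge 5$, lies at the second step. In higher dimension the natural global curvature term coming out of a Chern-Gauss-Bonnet type identity is a Pfaffian of the Riemann tensor, whose sign under the sole hypothesis $\sec\le 0$ is \emph{not} determined (this is intimately tied to Chern's conjecture on closed aspherical manifolds of even dimension). No currently known technique bypasses this sign issue, which is precisely why the authors take \eqref{ispo-ineq} as a hypothesis, rather than as a theorem, throughout the subsequent PDE analysis.
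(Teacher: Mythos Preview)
The statement you were asked to prove is the \emph{Cartan--Hadamard conjecture} itself, and the paper does not prove it: it is explicitly labeled a conjecture, and the authors take its validity (in dimension $n$) as a standing hypothesis for their main rigidity result, Theorem~\ref{rigidity-optimal}. There is therefore no ``paper's own proof'' to compare your proposal against.

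You recognize this in your final paragraph, which is exactly right: the paper uses \eqref{ispo-ineq} as an input, not an output. What precedes that paragraph is not really a proof attempt but a competent sketch of the known low-dimensional arguments (Weil/Beckenbach--Rad\'o via Gauss--Bonnet for $n=2$, Kleiner for $n=3$, Croke for $n=4$) together with a correct identification of the obstruction in higher dimension. A couple of minor corrections to that sketch: the historical attributions are slightly scrambled (Kleiner's paper \cite{Kleiner} is the $n=3$ case and Croke's \cite{Croke} is the $n=4$ case, as you state, but the integral-geometric Santal\'o-type approach you describe for $n=4$ is closer in spirit to Croke's argument, while Kleiner's $n=3$ proof is more of a direct comparison with a model cone); and the $n=2$ argument you outline is essentially correct but the passage from $\int_{\partial\Omega}\kappa_g\,ds\ge 2\pi$ to the isoperimetric inequality is cleaner via Bol--Fiala than via ``Cauchy--Schwarz plus first variation.''

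In short: there is no gap to name because there is no proof to assess. Your write-up is an accurate summary of the state of the art and of why the conjecture functions as a hypothesis in the paper rather than a result.
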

So far, the conjecture has been settled, with positive answer, only up to dimension $4$ (see \cite{Beck,Weil,Croke,Kleiner}). Although we will discuss in more detail these issues in Section \ref{sec: optimal}, it is worth mentioning here that, as shown in \cite[Proposition 8.2]{Hebey}, if the optimal constant in \eqref{sobolev-isop} is Euclidean (namely the Cartan-Hadamard conjecture holds) then also the optimal constant in \eqref{generic-sobolev-gen} is Euclidean. 

\smallskip
A further related question concerns \emph{optimal functions}, that is, functions $  u \in \dot{W}^{1,p}(\Mb^n)$ attaining the optimal constant in \eqref{generic-sobolev-gen}. In the Euclidean space $\R^n$, it is well known since the celebrated results by Aubin \cite{Aubin} and Talenti \cite{Talenti} that such functions do exist, and have the explicit expression 
\begin{equation}\label{optimizer}
u(x) = a  \left( b +  \left|x-x_o\right|^\frac{p}{p-1}\right)^{-\frac{n-p}{p}} \qquad \text{for a.e.~} x \in \R^n
\end{equation}
for some $x_o \in \R^n$, $a \in \R \setminus \{ 0 \} $, and $ b>0 $. One of the main purposes of the present paper is to address the problem on a general Cartan-Hadamard manifold. More precisely, supposing that the Cartan-Hadamard conjecture holds in dimension $n$, we can completely characterize all the Cartan-Hadamard manifolds supporting an optimal function. In fact, up to isometries, the only possibility is that $\Mb^n = \R^n$. 

\begin{theorem}\label{rigidity-optimal}
	Let $\mathbb{M}^n$ be a Cartan-Hadamard manifold and $ 1<p<n $. Suppose that the Cartan-Hadamard conjecture in dimension $n$ holds. Let $ u \in \dot{W}^{1,p}(\mathbb{M}^n)$ be a nontrivial optimal function for the Sobolev inequality \eqref{generic-sobolev-gen}, in the sense that $ u \not \equiv 0 $ and 
	$$
	\frac{\left\|\nabla u \right\|_{L^p\left( \Mb^n \right)}}{\|u\|_{L^{p^*}\!\left( \Mb^n \right)}} = \inf_{f \in \dot{W}^{1,p}(\mathbb{M}^n), \, f \not \equiv 0 } \frac{\left\|\nabla f \right\|_{L^p\left( \Mb^n \right)}}{\| f \|_{L^{p^*}\!\left( \Mb^n \right)}}   \, .
	$$
	Then $\Mb^n$ is isometric to $\R^n$, and
	\beq\label{instant}
	u(x) = a  \left( b +  \mathrm{dist}\!\left(x,x_o\right)^\frac{p}{p-1}\right)^{-\frac{n-p}{p}} \qquad \text{for a.e.~} x \in \Mb^n
	\eeq
	for some $x_o \in \Mb^n$, $a \in \R \setminus \{ 0 \} $, and $ b>0 $, where $\mathrm{dist}\!\left(x,x_o\right)$ denotes the Riemannian distance of $x$ from $x_o$.
\end{theorem}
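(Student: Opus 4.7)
The plan is to combine Schwarz symmetrization with the rigidity part of the Cartan-Hadamard conjecture in order to force $\Mb^n$ itself to be isometric to $\R^n$, and then to invoke the classical Aubin-Talenti classification on Euclidean space.

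First, since $\|\nabla |u|\|_{L^p} = \|\nabla u\|_{L^p}$ and $\||u|\|_{L^{p^*}} = \|u\|_{L^{p^*}}$, we may assume $u \ge 0$. As an optimizer, $u$ satisfies the Euler-Lagrange equation $-\Delta_p u = \lambda u^{p^*-1}$ for some $\lambda>0$, and the strong maximum principle of V\'azquez for the $p$-Laplacian ensures $u>0$ on all of $\Mb^n$. Next, let $u^\ast:\R^n \to [0,\infty)$ be the Schwarz rearrangement of $u$, defined so that $u^\ast$ is radially symmetric nonincreasing and $V_{\R^n}(\{u^\ast>t\}) = V_{\Mb^n}(\{u>t\})$ for every $t>0$. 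By construction $\|u^\ast\|_{L^{p^*}(\R^n)} = \|u\|_{L^{p^*}(\Mb^n)}$, and combining the coarea formula with the Euclidean isoperimetric inequality \eqref{ispo-ineq} (which holds on $\Mb^n$ by the Cartan-Hadamard conjecture) yields the P\'olya-Szeg\H{o} inequality $\|\nabla u^\ast\|_{L^p(\R^n)} \le \|\nabla u\|_{L^p(\Mb^n)}$.

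Applying the Euclidean Sobolev inequality to $u^\ast$ gives the chain
\[
\|u\|_{L^{p^*}(\Mb^n)} = \|u^\ast\|_{L^{p^*}(\R^n)} \le C^{\mathrm{Eucl}}_{n,p} \, \|\nabla u^\ast\|_{L^p(\R^n)} \le C^{\mathrm{Eucl}}_{n,p} \, \|\nabla u\|_{L^p(\Mb^n)}.
\]
As will be recalled in Section \ref{sec: optimal}, under the Cartan-Hadamard conjecture the optimal Sobolev constant on $\Mb^n$ coincides with $C^{\mathrm{Eucl}}_{n,p}$ (\cite[Proposition 8.2]{Hebey}); since $u$ attains it, every inequality above is an equality. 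Hence $u^\ast$ is itself an extremal of the Euclidean Sobolev inequality on $\R^n$, so by the classical Aubin-Talenti classification it is an instanton \eqref{optimizer} centred at the origin: in particular, smooth and strictly radially decreasing.

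Tracking the equality case backwards, equality in P\'olya-Szeg\H{o} together with the coarea formula forces equality in the isoperimetric inequality \eqref{ispo-ineq} for the superlevel set $\{u>t\}$, for a.e.\ $t \in (0, \sup u)$. The rigidity statement in the Cartan-Hadamard conjecture then yields that each such $\{u>t\}$ is isometric to a Euclidean ball $B_{r(t)} \subset \R^n$, with $r(t)$ ranging over all of $(0,\infty)$ because $u^\ast$ is strictly positive everywhere. Since $u>0$ on $\Mb^n$, the family $\{\{u>t\}\}_{t>0}$ exhausts $\Mb^n$. Choosing the isometries $\phi_{t}:B_{r(t)}\to \{u>t\}$ compatibly (postcomposing with suitable rigid motions so that the ``centres'' match, e.g.\ via the unique point $x_o \in \Mb^n$ realising $\max u$) and passing to the limit $t\to 0^+$ via an Arzel\`a-Ascoli argument for isometric embeddings, we obtain a global surjective isometry $\Phi:\R^n\to \Mb^n$. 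Then \eqref{instant} follows by transporting the explicit form of $u^\ast$ through $\Phi^{-1}$.

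The main obstacle is the last step: the equality-case analysis must treat carefully possible flat parts and null sets of the rearrangement, and one must then glue the individual model isometries $\phi_t$ into a single global isometry. The right normalization uses the maximum point of $u$ (and strict monotonicity of $u^\ast$) to pin down the centres of the Euclidean balls $B_{r(t)}$, after which the nested compatibility of the $\phi_t$ and a compactness argument furnish $\Phi$. The remaining points -- existence of a unique maximum, regularity of $u$ needed to apply the strong maximum principle, and sharpness of the P\'olya-Szeg\H{o} equality -- are standard once the symmetrization machinery is set up correctly on $\Mb^n$.
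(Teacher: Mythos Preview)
Your strategy---Schwarz symmetrization, equality in P\'olya--Szeg\H{o} forcing equality in the isoperimetric inequality on superlevel sets, then gluing the resulting local isometries via Arzel\`a--Ascoli---is exactly the route the paper takes. However, you misidentify where the real work lies.

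You list ``regularity of $u$'' among the ``standard'' leftover points, but the paper singles out precisely this as the crucial nontrivial step. The Cartan--Hadamard conjecture, as stated, applies only to \emph{bounded} sets $\Omega$; hence to invoke both the isoperimetric inequality and, more delicately, its rigidity case on the superlevel sets $\{u>t\}$, you must first know that these sets are bounded, i.e.\ that $u(x)\to 0$ as $\mathrm{dist}(x,o)\to\infty$. Finite volume (which follows from $u\in L^{p^\ast}$) is not enough. On a Cartan--Hadamard manifold with no lower Ricci bound, this decay does \emph{not} follow from the usual gradient estimates or Calder\'on--Zygmund theory, which may simply fail in this setting. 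The paper devotes Lemma~\ref{lemma-bk} and Proposition~\ref{thm-vanishing} to this issue, running a localized Moser iteration over balls $B_{R_k}(o_i)$ with $o_i\to\infty$ that uses only the Sobolev inequality on $\Mb^n$ (no curvature input beyond that). Without this, your appeal to the rigidity clause of the conjecture on $\{u>t\}$ is unjustified, and your normalization via ``the unique point $x_o$ realising $\max u$'' likewise presupposes that the supremum is attained.

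A secondary point: the paper does not pin the isometries at a maximum of $u$ (whose uniqueness you would also have to prove), but simply fixes an arbitrary base point and applies Ascoli--Arzel\`a to the distance-preserving maps. This avoids an unnecessary detour.
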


In particular, thanks to the validity of the Cartan-Hadamard conjecture in low dimension, we deduce the following.

\begin{corollary}
 Let $ n = 2,3,4 $, $ p \in (1,n) $, and let $\mathbb{M}^n$ be a  Cartan-Hadamard manifold. 	
	Suppose that there exists a nontrivial optimal function $ u \in \dot{W}^{1,p}(\mathbb{M}^n)$ for the Sobolev inequality \eqref{generic-sobolev-gen}. Then $\Mb^n$ is isometric to $\R^n$, and $u$ is of type \eqref{instant}.
\end{corollary}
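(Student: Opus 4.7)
The Corollary follows immediately by combining Theorem~\ref{rigidity-optimal} with the known resolution of the Cartan-Hadamard conjecture in low dimensions. My plan is essentially a citation and a single invocation of the previous theorem.

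First, I would recall that, as noted earlier in the introduction, the Cartan-Hadamard conjecture---including its full rigidity clause---is known to hold for $n \in \{2,3,4\}$, via the results assembled in~\cite{Beck,Weil,Croke,Kleiner}. In particular, for each such $n$, both the isoperimetric bound~\eqref{ispo-ineq} and the statement that equality forces isometry with a Euclidean ball are available. It is worth emphasizing the rigidity clause here: some expositions of these low-dimensional results state the isoperimetric inequality without spelling out the characterization of equality, but that characterization is present in the cited works and is essential for the application below.

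Granted this, for any $n \in \{2,3,4\}$ and any $p \in (1,n)$, the hypotheses of Theorem~\ref{rigidity-optimal} are satisfied. I would then apply that theorem verbatim to the given nontrivial optimal function $u \in \dot{W}^{1,p}(\mathbb{M}^n)$; its conclusion yields at once that $\mathbb{M}^n$ is isometric to $\R^n$ and that $u$ takes the Aubin-Talenti form~\eqref{instant}, which is exactly what has to be shown.

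There is no technical obstacle specific to the Corollary: its entire content is a specialization of Theorem~\ref{rigidity-optimal}, and all the genuine difficulty is concentrated in that theorem (where the only conceivable hard step is converting the equality cases in the Schwarz-symmetrization / P\'olya-Szeg\H{o} chain into a global isometry $\mathbb{M}^n\cong\R^n$, exploiting the rigidity clause of~\eqref{ispo-ineq} on every superlevel set of $u$). As a side remark, the very same argument would automatically upgrade the statement to any higher dimension in which the Cartan-Hadamard conjecture becomes available, since the reduction uses no dimension-specific ingredient beyond the conjecture itself.
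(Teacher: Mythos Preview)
Your proposal is correct and matches the paper's approach: the corollary is stated as an immediate consequence of Theorem~\ref{rigidity-optimal} together with the validity of the Cartan-Hadamard conjecture for $n\le 4$ (see the sentence preceding the corollary and Remark~\ref{rem: reg optimal sets}), and no separate proof is given. Your observation about the rigidity clause is also anticipated by the paper in Remark~\ref{rem: reg optimal sets}.
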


\begin{remark}\label{rem: reg optimal sets}
In the above quoted papers where the Cartan-Hadamard conjecture was proved for $n \le 4$, the result is typically established for \emph{smooth} sets (actually submanifolds). That is, it is shown that 
	\begin{equation}\label{iso-smooth}
	\mathrm{Per}(\Omega) \ge n \,  \omega_n^{\frac1n} \, [V(\Omega)]^{\frac{n-1}{n}} 
	\end{equation}
for all bounded smooth sets $ \Omega \subset \Mb^n $, and moreover, if equality holds for some $ \Omega $ within this class, then $\Omega$ is isometric to a Euclidean ball. By approximation, it is straightforward to extend \eqref{iso-smooth} to general bounded measurable sets. It is less obvious that a bounded measurable set attaining equality in \eqref{iso-smooth} is smooth, and thus isometric to a Euclidean ball. Nevertheless, this is surely true up to dimension $n=7$, as a consequence of subsequent regularity results for the so-called \emph{isoperimetric hypersurfaces} (see for example \cite[Theorem 2]{GMT} or \cite[Corollary 3.7]{Morgan}). 
\end{remark}

{In proving} Theorem \ref{rigidity-optimal}, a key point lies in the fact that any (nonnegative) optimal function of the $p$-Sobolev inequality weakly solves, up to a multiplicative constant, the following \emph{critical $p$-Laplace equation}: 
\begin{equation}\label{equation-pl}
-\Delta_p u = u^{p^\ast-1}, \quad u > 0 \, , \qquad \text{on } \Mb^n  \, ,
\end{equation}
where we recall that $ \Delta_p u := \div\!\big( \left| \nabla u \right|^{p-2} \nabla u \big) $. A first crucial step consists in showing that such optimal functions are globally bounded and decay uniformly to zero at infinity. Since we do not assume curvature bounds on $ \Mb^n $ other than $ \mathrm{Sect} \le 0$ (where $ \mathrm{Sect} $ denotes the sectional curvature), it does not seem possible to obtain such properties by means of the usual Euclidean-like techniques. Therefore, we need to set up a specific argument that can also be extended to 
frameworks more general than Cartan-Hadamard manifolds; we refer to the proof of Proposition \ref{thm-vanishing} for the details. Once that decay and regularity of the solutions are proved, we are able to adapt the symmetrization technique, originally developed in \cite{Talenti}, on the manifold $\Mb^n$, obtaining the rigidity result of both the manifold $\Mb^n$ and the optimal function $u$. 

Recently, similar rigidity results regarding \emph{interpolation inequalities} were proved in the papers \cite{Kristaly-jmpa,Kristaly-potential}, either upon requiring or not the validity of the Cartan-Hadamard conjecture (see also \cite{Farkas}). As for the Sobolev inequality with $p=2$, it was shown in \cite[Theorem 1.1]{KM} that no radial optimal function can exist unless $ \Mb^n \equiv \R^n $, actually without assuming the Cartan-Hadamard conjecture. However, this result will now follow as a particular case of Theorem \ref{thm: rig} below. 

We also mention \cite{Le,PiVe,Xi}, which concern rigidity results for Sobolev inequalities on manifolds with \emph{nonnegative} or \emph{asymptotically nonnegative} curvature, that is, in the somehow complementary setting with respect to ours.

\smallskip
Having discussed the relation between optimizers of the Sobolev inequality and solutions to \eqref{equation-pl}, a further natural step consists in studying rigidity results regarding solutions to \eqref{equation-pl} that are not necessarily optimal functions. Again, in $\R^n$, the problem is essentially understood: if $p=2$, then the only solutions to \eqref{equation-pl} are of type \eqref{optimizer}, see \cite{CGS}; if $1<p<n$ with $p \neq 2$, then the same holds under the additional assumption that $u \in \dot{W}^{1,p}(\R^n)$, see \cite{DMMS, Sc, Ve} and the recent paper \cite{CFR} for a different original approach. In all these contributions, a key step consists in proving the radial symmetry of positive solutions. However, when working on a manifold, this step becomes particularly involved since powerful tools available in the Euclidean context, such as the moving planes method, do not work (with the exception of some particular cases for which we refer to \cite{AlDaGe}). Therefore, in the above generality the problem remains open, and we focus instead on radial solutions to \eqref{equation-pl}. 

In fact, we consider the more general \emph{critical} or \emph{supercritical equation}
\beq\label{LE}
-\Delta_p u = u^q \, , \quad u>0 \, ,  \qquad \text{on $\Mb^n$, with $q \ge p^\ast-1$} \, ,
\eeq
addressing existence and asymptotic properties of $W^{1,p}_{\loc}(\Mb^n) \cap L^\infty_{\loc}(\Mb^n)$ radial weak solutions (from now on we will simply write ``radial solution" for the sake of brevity). Given the radiality assumption, it is natural to suppose further that $\Mb^n$ is a Cartan-Hadamard \emph{model manifold}: namely, there exists a pole $o \in \Mb^n$ such that the metric is given, in polar (or spherical) global coordinates about $o$, by 
\beq\label{metric}
g \equiv dr^2 + \psi^2(r) \, g_{\mathbb{S}^{n-1}} \, , 
\eeq
where $r$ is the Riemannian distance of a point of coordinates $(r,\theta) \in \mathbb{R}^+ \times \mathbb{S}^{n-1} $ from $o$, $g_{\mathbb{S}^{n-1}}$ stands for the usual round metric on the unit sphere, and $\psi:[0,+\infty) \to [0,+\infty)$ is a regular function with $\psi(0) = 0$ and $\psi'(0) = 1$. The Cartan-Hadamard assumption turns out to be equivalent to the fact that $ \psi $ is in addition \emph{convex}. A prototypical example is represented by the choice $\psi(r) = \sinh r$, which gives rise to a well-known realization of the hyperbolic space $\mathbb{H}^n$. 

For notational convenience, from here on and without further mention, we set
\begin{equation}\label{def-theta}
\Theta(r) := \frac{\int_0^r \psi^{n-1} \, ds}{\psi^{n-1}(r)} \qquad \forall r > 0 \, ,
\end{equation}
that is $\Theta$ accounts for the volume-surface ratio of geodesic balls centered at the pole $o$. This function, as we will see below, takes a primary role in our radial results. 

\smallskip
When $p=2$, existence and qualitative properties of radial solutions to the \emph{Lane-Emden equation} \eqref{LE} on the hyperbolic space and on more general model manifolds was recently investigated in \cite{BFG, BGGV}, also for subcritical powers. In particular, in \cite[Proposition 2.1]{BFG} it is proved that when $p=2$ and $q \ge 2^*-1$ there exist infinitely many radial solutions to \eqref{LE}, under fairly general assumptions on $\psi$. Moreover, under stronger assumptions, the authors were able to completely characterize the asymptotic behavior of the solutions, see \cite[Theorem 2.4]{BFG}, showing also that radial solutions with \emph{finite energy} cannot exist. 
Here we generalize these results in two directions (provided $\psi$ is of Cartan-Hadamard type): on one hand, we weaken the asymptotic assumptions on $\psi$ which are needed in \cite[Theorem 2.4]{BFG} (see the discussion below Theorem \ref{thm: decay}); on the other hand, we extend the results to any $1<p<n$. 

\smallskip
In the sequel, if $u$ is a radial function with respect to a point $o \in \mathbb{M}^n$, that is $u(x) = \varphi(\dist(x,o))$ for some real function $\varphi$, for simplicity we adopt the notation $u \equiv u(r)$, with $ r \equiv r(x) := \dist(x,o) \in [0,+\infty)$.

\begin{theorem}\label{thm: rig}
Let $\Mb^n$ be a Cartan-Hadamard manifold, $1<p<n$ and $q\ge p^*-1$. Suppose that there exists a radial solution $u$ to \eqref{LE} such that
\beq\label{hp grad}
\int_{\Mb^n} \left|\nabla u\right|^p d{V} <+\infty \, .
\eeq
Then $\Mb^n$ is isometric to $\R^n$, $q=p^*-1$ and $u$ is of type \eqref{instant}.
\end{theorem}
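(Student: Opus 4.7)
The plan is to combine two integrated identities with a sharp pointwise inequality that captures the Cartan-Hadamard convexity of $\psi$. Since $u$ is radial about a pole $o \in \Mb^n$, the manifold is effectively a model with warping function $\psi$, and \eqref{LE} reduces to the ODE
\[
(\psi^{n-1}|u'|^{p-2}u')' = -\psi^{n-1}u^q
\]
on $(0,\infty)$ with $u'(0)=0$. Direct integration yields $u' < 0$ together with $\psi^{n-1}|u'|^{p-1} = \int_0^r \psi^{n-1} u^q\, ds$. First I would establish, using the finite energy hypothesis, the integrated ODE, and the Cartan--Hadamard comparison $\psi(r) \ge r$, that $u(r) \to 0$ as $r \to \infty$ with enough quantitative decay to make the boundary terms arising in the integrations by parts below harmless.

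Next I derive two identities. Testing the equation against $u$ (via a cut-off procedure legitimated by the previous decay) gives the energy identity
\[
A := \int_{\Mb^n} |\nabla u|^p\, dV = \int_{\Mb^n} u^{q+1}\, dV =: B.
\]
Testing instead against the adapted multiplier $n\Theta(r)\, u'(r)$, which is the natural Pohozaev-type test function on a model manifold (it reduces to $r\, u'$ in Euclidean space and satisfies the clean relation $(\psi^{n-1}\Theta)' = \psi^{n-1}$), and performing two integrations by parts using $|u'|^{p-2} u' u'' = \tfrac{1}{p}(|u'|^p)'$, I obtain
\[
(n-1)\int_{\Mb^n} \frac{\psi'}{\psi}\, \Theta\, |\nabla u|^p\, dV = \frac{p-1}{p}\, A + \frac{B}{q+1},
\]
and combining with $A = B$ yields
\[
(n-1)\int_{\Mb^n} \frac{\psi'}{\psi}\, \Theta\, |\nabla u|^p\, dV = \left(\frac{p-1}{p} + \frac{1}{q+1}\right) A. \tag{$\star$}
\]

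The crucial ingredient is then the pointwise estimate $(\psi'/\psi)(r)\, \Theta(r) \ge 1/n$ on $(0,\infty)$. Indeed, by convexity of $\psi$ the function $\psi'$ is non-decreasing, so $\int_0^r \psi'(s)\, \psi^{n-1}(s)\, ds \le \psi'(r) \int_0^r \psi^{n-1}(s)\, ds$; rewriting the left-hand side as $\psi^n(r)/n$ and dividing through by $\psi^n(r)$ produces the claim, with equality on $[0,r]$ only if $\psi'$ is constant there, i.e.\ (using $\psi'(0)=1$) $\psi(s) \equiv s$ on $[0,r]$. Plugging this bound into $(\star)$ yields $(n-1)/n \le (p-1)/p + 1/(q+1)$, equivalently $q+1 \le p^\ast$; combined with the standing assumption $q \ge p^\ast-1$, we conclude $q = p^\ast-1$. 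Moreover, equality must then hold throughout $(\star)$, and since $|\nabla u| > 0$ on $\Mb^n \setminus \{o\}$ by the strict monotonicity of $u$, we get $(\psi'/\psi)\Theta \equiv 1/n$ on all of $(0,\infty)$, which forces $\psi(r) \equiv r$ and hence $\Mb^n$ is isometric to $\R^n$. At this point $u$ is a positive radial finite-energy solution to the critical $p$-Laplace equation on $\R^n$, and the classification results of \cite{DMMS,Sc,Ve} (see also \cite{CFR}) identify $u$ as an Aubin--Talenti profile \eqref{instant}. The hard part of the plan is the initial asymptotic control: only the bound $\int |\nabla u|^p\, dV < \infty$ is directly at our disposal, so obtaining the decay of $u$ and $u'$ needed to kill all boundary terms at infinity will require a careful quantitative use of the integrated ODE combined with standard regularity estimates for degenerate elliptic equations; once that technical step is in place, the rest of the argument is essentially algebraic.
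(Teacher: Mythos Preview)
Your approach is correct and takes a genuinely different route from the paper. The paper argues by contradiction through a case split: it introduces the pointwise Pohozaev function
\[
P_u(r)=\Big(\int_0^r\psi^{n-1}\,ds\Big)F_u(r)+\frac{\psi^{n-1}(r)}{q+1}\,|u'|^{p-2}u'\,u,\qquad F_u=\frac{p-1}{p}|u'|^p+\frac{u^{q+1}}{q+1},
\]
shows $P_u'\le 0$ and $P_u(0)=0$, and then separates the $p$-stochastically complete case (where $u\to 0$ is proved independently of the energy, and $P_u(\bar r)<0$ yields a logarithmic blow-up of $\int|u'|^p\psi^{n-1}$) from the $p$-stochastically incomplete case (where $u\to\lambda>0$ is shown first, and a direct ODE comparison forces the energy to diverge). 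Your plan instead integrates the Pohozaev computation globally to obtain $(\star)$ as an exact identity and then exploits the sharp pointwise bound $(\psi'/\psi)\,\Theta\ge 1/n$ (which is a clean, $q$-independent consequence of the convexity of $\psi$, while the paper bundles this with $q\ge p^\ast-1$ into the sign condition $K\le 0$). What your approach buys is a transparent algebraic rigidity step once the boundary terms are controlled; what the paper's approach buys is that it never needs the boundary term at infinity to vanish exactly, only $P_u(\bar r)<0$ at one point.

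Be aware, however, that the ``hard part'' you flag is where essentially all of the analysis lives, and it is not lighter than the paper's argument. Concretely: to obtain $(\star)$ as an \emph{equality} you need $V(r)F_u(r)\to 0$ and $\psi^{n-1}|u'|^{p-1}u\to 0$. The comparison $\psi(r)\ge r$ alone is not what forces $u\to 0$; you must first exclude $u\to\lambda>0$ via an energy blow-up (this is precisely the paper's $p$-stochastically incomplete argument, which uses a differential inequality of the type $V'\ge c\,V^{(2p-1)/p}$). Once $u\to 0$, the vanishing of $\psi^{n-1}|u'|^{p-1}u$ follows from a logarithmic-divergence argument (if it tended to $c_1>0$ then $\psi^{n-1}|u'|^p\sim c_1(-u'/u)$ would make $A=\infty$), and then $V F_u\to 0$ comes for free from $P_u\le 0$. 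So your ``technical step'' in effect reproduces both contradiction arguments of the paper; after that, your algebraic closing is indeed cleaner.
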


Note that here we do not require $ \Mb^n $ to be a model manifold, although for convenience we will carry out complete proofs in that case only (see Remark \ref{rem: radial-non-model} below on the modifications needed so as to treat the general case). Moreover, we do not even require $u \in \dot{W}^{1,p}(\Mb^n)$, but only that the gradient is integrable. This, for instance, allows us to include solutions with \emph{positive limit at infinity}. It is worth mentioning that such solutions do exist, under suitable assumption on $\Mb^n$ (this was already observed in \cite{BFG}). In fact, in proving Theorem \ref{thm: rig} (for model manifolds) an interesting dichotomy arises according to different integrability properties of the function $\Theta$ defined through \eqref{def-theta}. More precisely, we have the following. 

\begin{theorem}\label{thm: sc vs si}
Let $\Mb^n$ be a Cartan-Hadamard model manifold, $1<p<n$ and $q \ge p^*-1$. Then there exist infinitely many radial solutions to \eqref{LE} and the following alternative occurs:
\begin{itemize}
\item[($i$)] If 
\beq\label{p-sc}
\Theta^\frac1{p-1} \not \in L^1(\R^+),
\eeq
then any such solution is decreasing and tends to $0$ as $r \to +\infty$.

\smallskip

\item[($ii$)] If instead
\beq\label{p-si}
\Theta^\frac1{p-1} \in L^1(\R^+),
\eeq
then any such solution is decreasing and tends to a positive constant as $r \to +\infty$.
\end{itemize}
\end{theorem}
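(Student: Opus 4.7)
The plan is to pass to polar coordinates and recast \eqref{LE} as a singular ODE, from which the asymptotics of $u$ at infinity can be read off from a first-order integral identity. On a model manifold \eqref{metric}, a radial $u = u(r)$ solves \eqref{LE} if and only if
\[
\big(\psi^{n-1}(r)\, |u'|^{p-2} u'\big)' = -\psi^{n-1}(r) \, u^q \qquad \text{on } (0,+\infty),
\]
with initial conditions $u(0) = \alpha$, $u'(0) = 0$ for some $\alpha > 0$ dictated by regularity at the pole. I would produce a unique local solution $u_\alpha$ for every $\alpha > 0$ via standard singular ODE theory; integrating once gives
\[
\psi^{n-1}(r) \, \big(-u_\alpha'(r)\big)^{p-1} = \int_0^r \psi^{n-1}(s) \, u_\alpha^q(s) \, ds,
\]
whose right-hand side is strictly positive for $r > 0$, so $u_\alpha$ is strictly decreasing as long as it stays positive. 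To secure global positivity --- and hence the infinite family of radial solutions --- I would combine the Cartan-Hadamard bound $\psi(r) \ge r$ with the non-increasing pseudo-energy $E(r) := \tfrac{p-1}{p} |u_\alpha'|^p + u_\alpha^{q+1}/(q+1)$ (one checks $E' = -(n-1)(\psi'/\psi)|u_\alpha'|^p \le 0$) and the critical/supercritical assumption $q \ge p^\ast - 1$; this extends the analogous step carried out for $p=2$ in \cite[Proposition 2.1]{BFG} to general $1 < p < n$.

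Next, writing $w := -u_\alpha'$ and $L := \lim_{r \to +\infty} u_\alpha(r) \in [0, \alpha)$, the integrated equation above becomes
\[
w(r) = \left[\frac{\int_0^r \psi^{n-1}(s) \, u_\alpha^q(s) \, ds}{\psi^{n-1}(r)}\right]^{\!1/(p-1)}\!\!, \qquad \int_0^{+\infty} w(r) \, dr = \alpha - L,
\]
and plugging the two-sided bound $L \le u_\alpha(s) \le \alpha$ into the numerator, together with the definition \eqref{def-theta} of $\Theta$, yields
\[
L^{q/(p-1)}\, \Theta(r)^{1/(p-1)} \le w(r) \le \alpha^{q/(p-1)}\, \Theta(r)^{1/(p-1)}.
\]
Integrating on $(0,+\infty)$ leaves me with the fundamental sandwich
\[
L^{q/(p-1)} \, \big\|\Theta^{1/(p-1)}\big\|_{L^1(\mathbb{R}^+)} \le \alpha - L \le \alpha^{q/(p-1)} \, \big\|\Theta^{1/(p-1)}\big\|_{L^1(\mathbb{R}^+)}.
\]
In case (i), the left-hand member equals $+\infty$ unless $L = 0$, and since $\alpha - L$ is finite we must have $L = 0$, i.e., $u_\alpha \to 0$; in case (ii), the inequality $q/(p-1) > 1$ (which holds since $q \ge p^\ast - 1 > p-1$) forces $L \ge \alpha - \alpha^{q/(p-1)} \|\Theta^{1/(p-1)}\|_{L^1} > 0$ for every sufficiently small $\alpha$, producing an infinite family of solutions with $L > 0$.

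The hard part will be the unconditional statement in case (ii), namely that \emph{every} globally positive radial solution has $L > 0$, not only those corresponding to small $\alpha$. To close this gap I would argue by contradiction: assuming $L = 0$, for $R$ large one has $u_\alpha \le \delta$ on $[R,+\infty)$, whence $\int_R^r \psi^{n-1} u_\alpha^q \, ds \le \delta^q \psi^{n-1}(r) \Theta(r)$, and I would feed this sharper estimate back into the integral formula for $w$ and exploit the Cartan-Hadamard bound $\psi(r) \ge r$ --- which ensures integrability of $\psi^{-(n-1)/(p-1)}$ at infinity, since $(n-1)/(p-1) > 1$ --- to contradict $\int_0^\infty w = \alpha$ in a $\delta \to 0$ limit. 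Completing this technical step, alongside the nonlinear global-existence argument for $p \ne 2$, is the delicate heart of the proof.
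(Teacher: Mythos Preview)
Your sandwich argument for part ($i$) is correct and in fact a bit more direct than the paper's route, which instead uses a Pohozaev-type function to obtain the pointwise bound $(-u')^{p-1}/u^q \ge \Theta$ before integrating. Your observation that $L \ge \alpha - \alpha^{q/(p-1)}\|\Theta^{1/(p-1)}\|_{L^1} > 0$ for small $\alpha$ in case ($ii$) is also valid and gives a quick proof of the existence of infinitely many solutions with positive limit.

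The genuine gap is the contradiction argument you sketch for \emph{arbitrary} $\alpha$ in case ($ii$). Feeding the bound $u_\alpha \le \delta$ on $[R,\infty)$ into the integral formula produces only an \emph{upper} bound on $w$, hence on $\int_R^\infty w = u_\alpha(R)$; but $u_\alpha(R) \le \delta$ is precisely what you assumed, so nothing is contradicted as $\delta \to 0$. To rule out $L = 0$ one needs a \emph{lower} bound on $u_\alpha$ at infinity, and this is where the paper introduces the Pohozaev function
\[
P_u(r) := \left(\int_0^r \psi^{n-1}\,ds\right) \left(\tfrac{p-1}{p}|u'|^p + \tfrac{1}{q+1}u^{q+1}\right) + \tfrac{\psi^{n-1}}{q+1}\,|u'|^{p-2}u\,u'\,,
\]
which is nonincreasing (here is where $q \ge p^\ast-1$ and the convexity of $\psi$ are used) and strictly negative for large $r$ whenever $\Mb^n \not\equiv \R^n$. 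From $P_u(r) \le -C < 0$ one extracts $u^{1/(p-1)}(-u') \ge C'\,\psi^{-(n-1)/(p-1)}$, and integrating on $(r,\infty)$ gives the lower bound $u^{p/(p-1)}(r) \ge C''\int_r^\infty \psi^{-(n-1)/(p-1)}\,ds$. The paper then establishes a further $\limsup$ claim on $(u'/u)\,\psi^{(n-1)/(p-1)}\int_r^\infty \psi^{-(n-1)/(p-1)}\,ds$ and, along a suitable sequence $r_m$, integrates the equation once more to force $u(r) \ge c\,u(r_m)$ for all $r > r_m$, contradicting $L=0$. None of this is reachable from upper bounds on $w$.

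The same Pohozaev function is also what closes the global-positivity step in your first paragraph: the energy $E$ alone only yields boundedness of $u$ and $u'$, not that $u$ never hits zero. If $u(R)=0$ with $u'(R)<0$ at some finite $R$, then $P_u(R) = \tfrac{p-1}{p}|u'(R)|^p \int_0^R \psi^{n-1}\,ds > 0$, contradicting $P_u \le 0$. So both gaps in your proposal trace back to the missing Pohozaev identity.
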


To sum up, the critical or supercritical $p$-Laplace equation on a Cartan-Hadamard model manifold always admits infinitely-many solutions. Such solutions may vanish at infinity or not, according to the dichotomy entailed by \eqref{p-sc} and \eqref{p-si}. However, they never satisfy the integrability condition \eqref{hp grad}, unless $\Mb^n$ is isometric to $\R^n$, $q=p^*-1$, and $u$ is of type \eqref{instant}.

When $p=2$, assumption \eqref{p-sc} is equivalent to the \emph{stochastic completeness} of the model manifold at hand. This property is originally related to the fact that the trajectories of the Brownian motion acting on $ \Mb^n $, almost surely, do not blow up in finite time. In fact, such a property turns out to bear several analytic equivalent formulations, regarding both elliptic and parabolic equations (see \cite{Gryg,PRS,GIM}). When $ p \neq 2 $, it was already observed in \cite{MarVal,BPS} that \eqref{p-sc} can still be interpreted, at least from the point of view of elliptic PDEs, as a nonlinear version of stochastic completeness, to which we will refer as \emph{$p$-stochastic completeness} in analogy with the previous literature. 
Our Theorem \ref{thm: sc vs si} then connects the vanishing at infinity of radial solutions with this global property of the ambient model manifold $\Mb^n$.

Our last result concerns a more detailed study of the asymptotic behavior of radial solutions at infinity. 

\begin{theorem}\label{thm: decay}
Let $\Mb^n$ be a Cartan-Hadamard model manifold, $1<p<n$ and $q \ge p^* - 1 $. 
\begin{itemize}
\item[($i$)] Under assumption \eqref{p-sc}, suppose further that either there exists $\gamma \in [0,1)$ such that 
\beq\label{hp add 1}
\lim_{r \to +\infty}  \frac{r^\gamma \, \psi'(r)}{\psi(r)} =: \ell \in (0,+\infty) 
\eeq
or 
\beq\label{hp add 2}
\lim_{r \to +\infty}  \frac{\psi'(r)}{\psi(r)} = + \infty \qquad \text{and} \qquad \lim_{r \to +\infty} \frac{\psi(r)}{\psi'(r)} \left[ \log \left( \frac{\psi'(r)}{\psi(r)}\right) \right]' = 0  \, .
\eeq
If $u$ is a radial solution to \eqref{LE}, then 
\beq \label{hp add 3}
\lim_{r \to +\infty} \left( \int_0^r \Theta^\frac1{p-1}\,ds\right)^\frac{p-1}{q+1-p} u(r) =  \left(\frac{p-1}{q+1-p}\right)^\frac{p-1}{q+1-p} .
\eeq

\item[($ii$)] Under assumption \eqref{p-si}, if $u$ is a radial solution to \eqref{LE} with $\lambda:= \lim_{r \to +\infty} u(r)>0$, then
\[
\lim_{r \to +\infty} \left( \int_r^{+\infty} \Theta^\frac1{p-1} \, ds \right)^{-1}  \left( u(r)-\lambda \right) = \lambda^\frac{q}{p-1} \, .
\]
Moreover, the limit value $\lambda$ satisfies the universal bound
\beq\label{limit-lambda}
\lambda \le \left( \frac{p-1}{q+p-1} \right)^{\frac{p-1}{q+p-1}} \left(\int_0^{+\infty} \Theta^\frac{1}{p-1} \, ds \right)^{-\frac{p-1}{q+1-p}} \, .
\eeq
\end{itemize}
\end{theorem}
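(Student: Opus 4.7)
The plan is to derive, from equation \eqref{LE} written in radial form, a single integro-differential identity for $u$, and then extract in each of the two regimes an asymptotic ODE whose explicit solutions yield the stated profiles. In polar coordinates on the model manifold, \eqref{LE} reads
$$\bigl(\psi^{n-1}(r)\,|u'(r)|^{p-2}u'(r)\bigr)' \;=\; -\,\psi^{n-1}(r)\,u^q(r),$$
and since $u$ is decreasing by Theorem \ref{thm: sc vs si} (so $|u'|^{p-2}u' = -(-u')^{p-1}$), integrating from $0$ to $r$ gives the fundamental identity
$$(-u'(r))^{p-1} \;=\; \frac{1}{\psi^{n-1}(r)}\int_0^r \psi^{n-1}(s)\,u^q(s)\,ds.$$

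Part (ii) is the easier case. Since $u(s)\to\lambda>0$, for large $s$ we have $u^q(s)\to\lambda^q$, and a de l'Hopital argument applied to the ratio on the right-hand side yields $(-u'(r))^{p-1}\sim \lambda^q\,\Theta(r)$, i.e.\ $-u'(r)\sim \lambda^{q/(p-1)}\,\Theta^{1/(p-1)}(r)$. Integrating from $r$ to $\infty$ (using $u(\infty)=\lambda$) produces the first limit of (ii). For the universal bound \eqref{limit-lambda}, the sharper monotonicity input $u(s)\ge u(r)$ for $s\le r$ turns the identity into the pointwise inequality
$$-u'(r)\;\ge\;u(r)^{q/(p-1)}\,\Theta(r)^{1/(p-1)}.$$
Separating variables and integrating over $(0,\infty)$, the left-hand side telescopes to $\tfrac{1}{\beta-1}\bigl(\lambda^{1-\beta}-u(0)^{1-\beta}\bigr)$ with $\beta=q/(p-1)>1$; dropping the $u(0)$ term and rearranging yields the desired upper bound on $\lambda$.

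Part (i) follows the same scheme but is genuinely harder, because now $u(r)\to 0$ and the na\"ive ``$u(s)\approx u(r)$'' at the endpoint fails in the absence of a growth assumption on $\psi$. The central claim is the asymptotic equivalence
$$\int_0^r \psi^{n-1}(s)\,u^q(s)\,ds \;\sim\; u^q(r)\int_0^r \psi^{n-1}(s)\,ds \qquad \text{as } r\to+\infty,$$
from which the fundamental identity reduces to $(-u'(r))^{p-1}\sim u^q(r)\,\Theta(r)$, and integrating $-u'/u^{q/(p-1)}\sim\Theta^{1/(p-1)}$ between a large $r$ and $+\infty$ exactly as in Part (ii) gives \eqref{hp add 3}. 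By a de l'Hopital argument the claimed equivalence boils down to verifying that $-u'(r)\,\Theta(r)/u(r)\to 0$; this is the self-referential point, which I would resolve by bootstrapping: first establish rough a priori two-sided bounds for $u$ and $-u'$ from the identity using $u(r)\le u(s)\le u(0)$ in the integrand, and then feed these into the logarithmic-derivative control provided by the hypotheses. Assumption \eqref{hp add 1} forces $\psi$ to be of sub-exponential/exponential type and, via a direct de l'Hopital on $\Theta$, pins down the asymptotics of $\Theta$ itself; assumption \eqref{hp add 2} plays the analogous role in the super-exponential regime, the second condition being precisely the slow-variation statement needed to differentiate through $\log(\psi'/\psi)$ when applying de l'Hopital.

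The main obstacle is exactly this self-referential L'Hopital step in Part (i). The two alternative hypotheses \eqref{hp add 1}--\eqref{hp add 2} are tailored so that $\Theta$ is regularly varying (in a suitable sense) at infinity, which is what allows the bootstrap to close; once the equivalence is established, both parts of the theorem flow in parallel from the single asymptotic ODE $-u'\sim u^{q/(p-1)}\,\Theta^{1/(p-1)}$, whose explicit integration unifies the vanishing and non-vanishing cases of Theorem \ref{thm: sc vs si}.
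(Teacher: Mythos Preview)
Your treatment of Part~($ii$) is correct and matches the paper almost line for line: the integrated identity, the $\varepsilon$-sandwich (or equivalently L'H\^opital) on the $u^q$ weight, integration from $r$ to $+\infty$, and for \eqref{limit-lambda} the differential inequality $-u'/u^{q/(p-1)}\ge\Theta^{1/(p-1)}$ integrated on $(0,r)$.

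For Part~($i$) you have correctly located the heart of the matter, namely that everything reduces to proving
\[
\frac{u'(r)}{u(r)}\,\Theta(r)\;\longrightarrow\;0,
\]
but your proposed bootstrap does not close. Using only $u(r)\le u(s)\le u(0)$ in the integral identity gives you $-u'\le C\,\Theta^{1/(p-1)}$ and $-u'\ge u^{q/(p-1)}\Theta^{1/(p-1)}$; the second integrates to an \emph{upper} bound on $u$ (this is \eqref{24021}), but neither produces a \emph{lower} bound on $u$, which is exactly what you need to control $(-u')\Theta/u$ from above. The missing ingredient is the Pohozaev function
\[
P_u(r)=\Bigl(\textstyle\int_0^r\psi^{n-1}\,ds\Bigr)F_u(r)+\frac{\psi^{n-1}(r)}{q+1}\,|u'|^{p-2}u\,u',
\]
whose nonpositivity (Lemma~\ref{lem: on P}) yields, after dropping the $|u'|^p$ piece of $F_u$, the inequality $-\frac{u'}{u}\,\Theta\le\frac{p}{(p-1)(q+1)}$. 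This is what feeds the argument, but note that it only gives \emph{boundedness}, not convergence to zero.

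Upgrading boundedness to $u'\Theta/u\to 0$ is where the paper departs sharply from a straight iteration. It first rules out any decay of the form $u\le C\psi^{-\beta}$ (Lemma~\ref{lem: no very fast decay}, proved by a genuine bootstrap \emph{combined with} the Pohozaev lower bound $u\ge C_\varepsilon\psi^{-p(n-1+\varepsilon)/((p-1)(q+1))}$); with this in hand, if the ratio $\frac{u'}{u}\frac{\psi}{\psi'}$ failed to converge to $0$ there would be a sequence of local extremal points $r_m$ of this $C^1$ function, and evaluating the ODE \eqref{rad eq} \emph{at those extremal points} (where a second-order relation is available) forces the ratio along $\{r_m\}$ to tend to $0$ after all --- a contradiction. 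This is Lemma~\ref{lem: decay 2}, and the hypotheses \eqref{hp add 1}--\eqref{hp add 2} enter precisely to control the error terms in this step. The same extremal-point device is then run a second time (Lemma~\ref{lem: decay 3}) to pin down the exact constant in $(-u')^{p-1}/(u^q\Theta)\to 1$. Neither the Pohozaev input nor this ODE-at-extrema technique is visible in your sketch, and without them the ``rough a priori two-sided bounds'' you invoke are simply not available.
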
 

Assumptions \eqref{hp add 1} and \eqref{hp add 2} entail growth conditions, which to some extent ensure that $\psi$ has at least an \emph{exponential-like} behavior at infinity (see below). On one hand, as already mentioned, the case $p=2$ in Theorems \ref{thm: rig}, \ref{thm: sc vs si} and \ref{thm: decay}-($i$) was partially covered by \cite[Proposition 2.1 and Theorem 2.4]{BFG}, for a class of model manifolds that is slightly more general than the Cartan-Hadamard one. On the other hand, besides the fact that we also consider the case $ p \neq 2 $, our assumptions include certain manifolds that were not covered therein. Indeed, in \cite{BFG} it is required that either \eqref{hp add 2} holds (but without the $ \psi/\psi' $ term in the rightmost limit) or that $\psi'(r)/\psi(r) \to \ell \in (0,+\infty)$ as $r \to+ \infty$, which is a particular case of \eqref{hp add 1}; this latter condition allows us to treat model functions of type
\beq\label{esempi-gamma}
\psi(r) \sim e^{c \, r^{1-\gamma}} \qquad \text{as $r \to +\infty$\,, with $\gamma \in (0,1)$ and $c>0$\,,}
\eeq
which do not fulfill the assumptions in \cite{BFG}. Note that these kinds of manifolds  have a relevant role both as concerns \emph{radial} Sobolev inequalities and nonlinear diffusion PDEs, as discussed in a series of recent papers \cite{MurBum,MRon,GMV}. On top of that, we stress that in Theorems \ref{thm: rig} and \ref{thm: sc vs si}-($i$) we do not need any additional assumption, whereas in \cite{BFG} similar results are obtained still under the aforementioned conditions on $\psi'/\psi$.

It is not difficult to check that \eqref{hp add 1} actually implies \eqref{p-sc}, whereas \eqref{hp add 2} in general does not (one can take for instance model functions as in \eqref{esempi-gamma} with $ \gamma = 1-p-\eps $ for $ \eps>0 $).

We point out that, without assuming \eqref{hp add 1} and \eqref{hp add 2}, the thesis of Theorem \ref{thm: decay} may fail. In fact, in the next proposition, we show that
if $\psi$ has a \emph{power-like} growth at infinity then the asymptotic behavior described in Theorem \ref{thm: decay} cannot hold. In addition, we can show the existence of Cartan-Hadamard model manifolds, whose function $ \psi $ does not have a power-like growth, where it is not even possible to describe precise asymptotics of solutions at infinity.

\begin{proposition}\label{prop: oscillazioni}
Let $\Mb^n$ be a Cartan-Hadamard model manifold, $1<p<n$ and $q \ge p^*-1$. Suppose that 
\beq\label{hp fail}
\liminf_{r \to +\infty} \frac{ \int_0^r \Theta^{\frac{p}{p-1}} \, \psi^{n-1} \, ds  }{ \psi^{n-1}(r) \, \Theta(r) \int_0^r \Theta^{\frac{1}{p-1}} \, ds } > 0 \, .
\eeq
Then \eqref{p-sc} holds, but formula \eqref{hp add 3} fails for radial solutions to \eqref{LE} . In particular, this is the case if \eqref{hp add 1} is satisfied with $ \gamma=1 $. 

Furthermore, for every $ 1<p<n $, $ q \ge p^\ast-1 $ and $\alpha>0$, one can construct a Cartan-Hadamard model manifold satisfying \eqref{p-sc},
\beq\label{hp fail exp}
\limsup_{r \to +\infty} \frac{\psi(r)}{e^{\ell r}} = + \infty \qquad \forall \ell>0 \, ,
\eeq
and a corresponding radial solution $u$ to \eqref{LE} with $ u(0)=\alpha $ such that the limit in \eqref{hp add 3} does not exist.
\end{proposition}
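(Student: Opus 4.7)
Since any radial solution is decreasing (Theorem \ref{thm: sc vs si}), integrating the radial form of \eqref{LE} once gives
\begin{equation*}
\psi^{n-1}(r) \, |u'(r)|^{p-1} = \int_0^r \psi^{n-1}(s) \, u^q(s) \, ds \, .
\end{equation*}
First, \eqref{p-sc} itself can be recovered from \eqref{hp fail}: with $F(r) := \int_0^r \Theta^{1/(p-1)} \, ds$, the integration by parts $\int_0^r \Theta^{p/(p-1)} \psi^{n-1} \, ds = F(r)\,\psi^{n-1}(r)\,\Theta(r) - \int_0^r F\,\psi^{n-1} \, ds$ shows that the ratio in \eqref{hp fail} equals $1 - \int_0^r F\,\psi^{n-1} \, ds /[F(r)\,\psi^{n-1}(r)\,\Theta(r)]$, and this tends to $0$ whenever $F(\infty) < +\infty$, contradicting the strict positivity of the $\liminf$. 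Hence $F(r) \to +\infty$. For the failure of \eqref{hp add 3}, set $\sigma := (p-1)/(q+1-p)$, $C_0 := \sigma^{\sigma}$, and argue by contradiction, assuming $u(r) \sim C_0 \, F(r)^{-\sigma}$ as $r \to +\infty$. Formally differentiating this asymptotic (the identities $(\sigma+1)(p-1) = \sigma q$ and $C_0^q = (\sigma C_0)^{p-1}$ ensure leading-order consistency) gives $\psi^{n-1}(r) \, |u'(r)|^{p-1} \sim C_0^q \, \psi^{n-1}(r) \, \Theta(r) \, F(r)^{-\sigma q}$, whereas inserting $u^q \sim C_0^q \, F^{-\sigma q}$ into the integral identity and integrating by parts via $\psi^{n-1} = (\psi^{n-1}\Theta)'$ rewrites it as
\begin{equation*}
C_0^q \, \psi^{n-1}(r) \, \Theta(r) \, F(r)^{-\sigma q} + \sigma q \, C_0^q \int_0^r \psi^{n-1} \, \Theta^{p/(p-1)} \, F^{-\sigma q - 1} \, ds \, .
\end{equation*}
Matching asymptotics forces the last integral to be $o(\psi^{n-1}(r) \, \Theta(r) \, F(r)^{-\sigma q})$; bounding $F(s)^{-\sigma q - 1} \ge F(r)^{-\sigma q - 1}$ on $[0,r]$ reduces this requirement to demanding that the $\liminf$ in \eqref{hp fail} be zero, a contradiction. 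For the "In particular" statement, \eqref{hp add 1} with $\gamma = 1$ yields $\psi(r) = r^{\ell (1+o(1))}$ by integration of $\psi'/\psi \sim \ell/r$, and a direct computation shows that each of the four factors in \eqref{hp fail} has power-like growth with matching overall exponent in numerator and denominator, so the ratio admits a finite positive limit.

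\textbf{Part 2 (construction).} Given $p$, $q$, $\alpha$, I would build $\psi$ by alternating two regimes along a sequence $\{a_k\} \uparrow +\infty$. On long intervals $[a_{2k}, a_{2k+1}]$, impose $\psi'(r)/\psi(r) \sim \mu/r$ for a small fixed $\mu > 0$, so that $\psi$ grows almost linearly and $\Theta^{1/(p-1)}$ accumulates enough mass to preserve \eqref{p-sc}. On short transition intervals $[a_{2k+1}, a_{2k+2}]$, let $\psi'(r)/\psi(r)$ be very large, allowing $\psi$ to pick up multiplicative jumps whose cumulative effect produces \eqref{hp fail exp}. A smoothing of this piecewise specification keeps $\psi$ smooth, positive, and convex, hence of Cartan--Hadamard type. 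The Cauchy problem for the radial form of \eqref{LE} at $r=0$ with $u(0) = \alpha$, $u'(0) = 0$ produces a global decreasing solution by the argument behind Theorem \ref{thm: sc vs si}. On each long interval, the Part 1 analysis approximately stabilizes $u(r) \, F(r)^{\sigma}$ near $C_0$; on each burst, the sharp drop of $\Theta$ (hence of $F'$) decouples $u$ from that asymptotic and drives $u(r) \, F(r)^{\sigma}$ toward a different value. Tuning $\{a_k\}$ to be sufficiently separated ensures that both stabilizations occur infinitely often, so the limit in \eqref{hp add 3} fails to exist.

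\textbf{Main obstacle.} The hardest step is the Part 2 construction: the growth rates on each regime and the sequence $\{a_k\}$ must be tuned so that simultaneously (i) $\psi$ remains convex and $p$-stochastically complete, (ii) the super-exponential $\limsup$ condition \eqref{hp fail exp} is met, and (iii) $u(r) \, F(r)^{\sigma}$ exhibits a persistent nonvanishing oscillation. In particular, each burst must perturb the solution strongly enough to dominate the stabilization accumulated during the subsequent plateau, but not so strongly as to push $u$ into a qualitatively different global regime.
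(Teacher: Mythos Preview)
Your Part 1 is close to the paper's argument and shares the same backbone (integration by parts plus contradiction with the positive $\liminf$), but there is a genuine gap: the step ``formally differentiating this asymptotic gives $\psi^{n-1}|u'|^{p-1}\sim C_0^q\,\psi^{n-1}\Theta\,F^{-\sigma q}$'' is illegitimate, since $u(r)\sim C_0 F(r)^{-\sigma}$ does not in general imply the corresponding relation for $u'$. The paper avoids this by never isolating $u'$: it writes $u(r)=\int_r^\infty(-u')\,ds=\int_r^\infty\big(\psi^{1-n}(s)\int_0^s\psi^{n-1}u^q\,dt\big)^{1/(p-1)}ds$, so that the assumed asymptotic \eqref{hp add 3} for $u$ translates directly into an asymptotic for this double integral (their \eqref{ee2}). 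After a technical estimate (their \eqref{ee2-bis}) showing that the contribution of any finite portion of the inner integral is negligible, one is reduced to \eqref{ee4}, and \emph{then} the integration-by-parts step you wrote (with the bound $F(s)^{-\sigma q-1}\ge F(r)^{-\sigma q-1}$) yields the contradiction with \eqref{hp fail}. Your $p$-stochastic completeness argument and the $\gamma=1$ verification are fine and essentially match the paper's.

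Your Part 2 has the right architecture (alternating regimes, recursive choice of transition radii) but the roles of the two regimes are reversed. On power-like stretches ($\psi'/\psi\sim \mu/r$), Part 1 does \emph{not} stabilize $u\,F^\sigma$ near $C_0$; quite the opposite, the paper shows (via an estimate of the form \eqref{asymp-1} in the proof) that along such stretches $u\,F^\sigma$ drops \emph{below} $C_0\,(1-1/n)^{1/(q+1-p)}$. Conversely, on stretches where $\psi'/\psi\to\text{const}>0$ (exponential-like), Theorem \ref{thm: decay} applies and drives $u\,F^\sigma$ \emph{toward} $C_0$, hence above $C_0\,(1-1/(2n))^{1/(q+1-p)}$. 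The paper exploits precisely this: it alternates linear pieces $\psi_k(r)\sim c\,r$ (even $k$) with exponential pieces satisfying $\psi_k'/\psi_k\to 2k$ (odd $k$), and at each step chooses $r_{k+1}$ large enough (after fixing $\psi_k$) so that the solution has already entered the corresponding asymptotic window; the growing slopes $2k$ also deliver \eqref{hp fail exp}. Your sketch does not supply these two quantitative inputs (the sub-$C_0$ bound on linear pieces and the convergence-to-$C_0$ on exponential pieces), and without them there is no mechanism guaranteeing a persistent oscillation.
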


We finally mention that \emph{radial} and \emph{weighted} Euclidean equations, to some extent, can be related to the radial version of \eqref{LE} (see also problem \eqref{radial pb} below), by means of a change of variable introduced in \cite[Section 7]{GMV}. In this regard, at least when $p=2$, such problems have largely been studied previously: see for example \cite{cheng-li} for nonexistence results, \cite{NiKa,Mana,Wei-Ni} for existence of solutions that do not vanish at infinity and \cite{Ni-Yo,Yana,Yana-Yotsu} for the analysis of zeros and asymptotics of radial solutions. 

\begin{remark}\label{rem: confronto hp}
On one hand, the fact that the asymptotic behavior of solutions, in the power-like case, cannot be of type \eqref{hp add 3} is not surprising. Indeed, if $ \Mb^n \equiv \mathbb{R}^n $, apart from the well-known special case $ q=p^\ast-1 $, in \cite[Theorem 9.1]{QS} (for $ p=2 $ and $q>2^\ast-1$) it was established that the limit constant is different from the one appearing on the right-hand side of \eqref{hp add 3}. Moreover, for analogous {weighted} Euclidean equations (recall the above discussion), it was proved in \cite[Theorem 5.33]{DN} (for $p=2$ and $ q=2^\ast-1 $) that actually solutions tend to ``oscillate'' around the expected asymptotic behavior. On the other hand, a general condition valid for all $ p $ and $ q $ such as \eqref{hp fail} seemed to be unknown, as well as the fact that there are non-power-like model functions $ \psi $ for which the limit in \eqref{hp add 3} does not exist.
\end{remark}
 
 \subsection*{Structure of the paper} Section \ref{sec: optimal} is entirely devoted to the proof of Theorem \ref{rigidity-optimal}, along with crucial preliminary results dealing with a priori estimates for optimal functions. In Section \ref{sec: radial} we focus on radial solutions, proving Theorems \ref{thm: rig}, \ref{thm: sc vs si}, \ref{thm: decay} and Proposition \ref{prop: oscillazioni} after a series of technical lemmas.

\subsection*{Acknowledgments:} The authors are partially supported by the INdAM-GNAMPA group (Italy). The first author is also supported by the PRIN 2017 project ``Direct and Inverse Problems for Partial Differential Equations: Theoretical Aspects and Applications'' (Italy).

\section{Optimal functions for the $p$-Sobolev inequality}\label{sec: optimal}

The goal of this section is to establish that, upon assuming the validity of the \emph{Cartan-Hadamard conjecture}, optimal functions for the Sobolev inequality \eqref{generic-sobolev-gen} on a Cartan-Hadamard manifold $ \Mb^n $ cannot exist unless $ \Mb^n $ is (isometric to) $ \R^n $. 

By means of a classical variational argument, it is plain that any nonnegative optimal function for \eqref{generic-sobolev-gen} satisfies in a weak sense, up to a multiplication by a positive constant, the following $p$-Laplace equation:
\begin{equation}\label{equation-pl-apriori}
-\Delta_p u = u^{p^\ast-1}  \qquad \text{on } \Mb^n  \, .
\end{equation}
In the next two results, which are stated under more general assumptions and may have an independent interest, we establish global boundedness and vanishing at infinity for general nonnegative \emph{energy solutions} to \eqref{equation-pl-apriori}, namely nonnegative (weak) solutions that in addition belong to the Sobolev space $ \dot{W}^{1,p}(\Mb^n) $, such as optimal functions. As a consequence, we will in particular deduce that actually \eqref{equation-pl-apriori} implies \eqref{equation-pl}. It is worth mentioning that, without further bounds on the Ricci curvature of $\mathbb{M}^n$, it does not seem possible to derive gradient estimates, starting from the $L^\infty$ bounds, as in the Euclidean case. In general, also Calder\'on-Zygmund-type results may fail (see \cite{Pigola-survey}). Therefore, the fact that energy solutions to \eqref{equation-pl-apriori} decay at infinity does not follow from standard arguments, and we need to devise an ad hoc method which may be useful in different contexts.

\smallskip
First of all, we prove that energy solutions are globally bounded. Here and in the sequel, for the sake of readability, for all $ q \in [1,\infty] $ we set $ \| \cdot \|_q  := \| \cdot \|_{L^q(\Mb^n)} $.

\begin{lemma}\label{lemma-bk}
 Let $ 1<p<n $ and $\mathbb{M}^n$ be any complete, noncompact Riemannian manifold supporting the Sobolev inequality \eqref{generic-sobolev-gen}. Let $ u $ be an energy solution to \eqref{equation-pl-apriori}. Then $ u \in L^\infty(\Mb^n) $. 
\end{lemma}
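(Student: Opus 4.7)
The plan is a global Brezis--Kato/Moser bootstrap driven solely by the Sobolev inequality \eqref{generic-sobolev-gen}. Rewriting \eqref{equation-pl-apriori} in the form $-\Delta_p u = V\, u^{p-1}$ with $V := u^{p^\ast - p}$, we observe that $V \in L^{n/p}(\Mb^n)$ since $u \in L^{p^\ast}(\Mb^n)$ by Sobolev. The exponent $n/p$ is precisely the critical one for the Sobolev inequality to yield a gain of integrability, so a naive H\"older estimate on all of $\Mb^n$ is insufficient; the key observation is that $V \, \chi_{\{u > K\}}$ has arbitrarily small $L^{n/p}$-norm, provided $K$ is chosen sufficiently large in terms of $u$.

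The main analytic step is to test \eqref{equation-pl-apriori} against the truncated function
\[
\varphi_L = u \, u_L^{\alpha} \, , \qquad u_L := \min\{u,L\} \, , \qquad L>0 \, , \ \alpha > 0 \, ,
\]
which is an admissible element of $\dot{W}^{1,p}(\Mb^n)$ because $u \in \dot{W}^{1,p}(\Mb^n)$ and $u_L$ is bounded. Splitting the computation into the sets $\{u \le L\}$ and $\{u > L\}$, one obtains
\[
\int_{\Mb^n} |\nabla v_L|^p \, dV \leq C(p,\alpha) \int_{\Mb^n} u^{p^\ast} u_L^{\alpha} \, dV \, , \qquad v_L := u \, u_L^{\alpha/p} \, .
\]
Combining this with the Sobolev inequality applied to $v_L$, and then splitting $\int u^{p^\ast} u_L^\alpha \, dV$ according to whether $u \le K$ or $u > K$, via H\"older with conjugate exponents $n/p$ and $p^\ast/p$ on the second piece, one arrives at
\[
\|v_L\|_{p^\ast}^p \leq C'(p,\alpha) \left[ K^{p^\ast - p} \int_{\Mb^n} u_L^\alpha u^p \, dV + \Bigl( \int_{\{u > K\}} u^{p^\ast} \, dV \Bigr)^{p/n} \|v_L\|_{p^\ast}^p \right] .
\]
Choosing $K = K(\alpha)$ so large that the coefficient in front of $\|v_L\|_{p^\ast}^p$ on the right-hand side is at most $1/2$, absorbing, and letting $L \to \infty$ by monotone convergence upgrades $u$ from $L^{p^\ast}(\Mb^n)$ to $L^{p^\ast (1 + \alpha/p)}(\Mb^n)$.

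Iterating this step along a sequence of exponents $\alpha_k$ for which the Lebesgue exponent $p^\ast(1+\alpha_k/p)$ grows in geometric progression of ratio $p^\ast/p > 1$ yields $u \in L^q(\Mb^n)$ for every finite $q$. The final passage to $L^\infty$ is then standard: either one tracks the compounding of the constants $C'(p,\alpha_k)$ along the iteration in the spirit of Moser's scheme, obtaining an estimate of the form $\|u\|_\infty \le C \|u\|_{p^\ast}^{\theta}$; or, once $u \in L^q$ for some $q$ so large that $u^{p^\ast - 1} \in L^s(\Mb^n)$ with $s > n/p$, one concludes in finitely many steps via a De Giorgi--Stampacchia truncation based on the test functions $(u-k)_+$ and a further application of \eqref{generic-sobolev-gen}. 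The expected main obstacle is exactly the criticality of $V = u^{p^\ast - p}$ in $L^{n/p}(\Mb^n)$: the very first iteration produces no gain from a direct H\"older estimate on the whole manifold, and it is essential to localize on the superlevel set $\{u > K\}$, where the $L^{n/p}$-mass of $V$ becomes arbitrarily small. Once this initial hurdle is cleared, no further input beyond the global Sobolev inequality is required, in accordance with the generality of the hypotheses of the lemma.
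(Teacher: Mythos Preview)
Your argument is correct and is essentially the same Brezis--Kato/Moser bootstrap that the paper has in mind: the paper omits the details entirely, referring to \cite[Lemma~2.1]{Ve} and \cite[Appendix~E]{Pe}, whose proofs proceed exactly via the truncated test functions $u\,u_L^\alpha$, the splitting on $\{u>K\}$ to handle the critical potential $V=u^{p^\ast-p}\in L^{n/p}$, and the subsequent iteration. One small imprecision: the asserted bound ``$\|u\|_\infty\le C\,\|u\|_{p^\ast}^\theta$'' cannot hold with a universal constant (on $\R^n$ the equation is scaling invariant while $\|u\|_\infty$ is not), so the final $L^\infty$ estimate necessarily depends on $u$ through the choice of $K$; this does not affect the qualitative conclusion $u\in L^\infty(\Mb^n)$, and your De~Giorgi--Stampacchia alternative for the last step is clean and correct.
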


\begin{proof}
The proof can be carried out exactly as in the Euclidean case, for which we refer to \cite[Lemma 2.1]{Ve} and \cite[Appendix E]{Pe}. We omit the details.
\end{proof}

We can then show that in fact solutions vanish at infinity. 

\begin{proposition}\label{thm-vanishing}
 Let $ 1<p<n $ and $\mathbb{M}^n$ be any complete, noncompact Riemannian manifold supporting the Sobolev inequality \eqref{generic-sobolev-gen}. Let $ u $ be an energy solution to \eqref{equation-pl-apriori}. 
	Then $u$ is of class $C^1(\Mb^n)$ and, given any $ o \in \Mb^n $, it holds
	\begin{equation*}\label{eq-vanish}
	\lim_{\dist(x,o)\to +\infty} u(x) = 0. 
	\end{equation*}
	\end{proposition}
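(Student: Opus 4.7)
The $C^1$ regularity is a purely local question. Since $u \in L^\infty(\Mb^n)$ by Lemma \ref{lemma-bk} and the right-hand side $u^{p^*-1}$ is therefore also in $L^\infty_{\loc}$, in any coordinate chart the equation becomes a uniformly elliptic quasilinear equation with bounded inhomogeneity, and standard interior regularity theory for the $p$-Laplacian (DiBenedetto--Tolksdorf) delivers $u \in C^{1,\alpha}_{\loc}(\Mb^n)$. The core of the proof is therefore the decay at infinity. The strategy is to transfer the integrability fact that
\[
\| u \|_{L^{p^*}(\Mb^n \setminus B_R(o))} \longrightarrow 0 \qquad \text{as } R \to \infty
\]
(which holds because $\dot W^{1,p}(\Mb^n)\hookrightarrow L^{p^*}(\Mb^n)$) into pointwise decay, via a Moser iteration driven solely by the \emph{global} Sobolev inequality \eqref{generic-sobolev-gen}. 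No local Sobolev inequality on geodesic balls, and hence no curvature or volume lower bound, will enter.

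For $R>0$ large and $\eta$ a Lipschitz cutoff with $\eta=0$ on $B_R(o)$ and $0\le\eta\le 1$, I would test the equation with $\eta^p u^\beta$ (for $\beta \ge 1$; admissibility follows from $u\in L^\infty\cap \dot W^{1,p}$ and truncation--approximation). After integration by parts, Young's inequality on the cross gradient term, and the substitution $v:=u^{(\beta+p-1)/p}$, one obtains a Caccioppoli-type bound
\[
\int_{\Mb^n} \eta^p \, |\nabla v|^p \, dV \;\le\; C(\beta,p)\left[ \int_{\Mb^n} \eta^p\, u^{\beta+p^*-1}\, dV + \int_{\Mb^n} u^{\beta+p-1}\, |\nabla \eta|^p\, dV \right].
\]
Applying \eqref{generic-sobolev-gen} to $\eta v$ and using Hölder on the critical term,
\[
\int_{\Mb^n} \eta^p u^{\beta+p^*-1}\, dV \;\le\; \Bigl(\int_{\Mb^n} (\eta v)^{p^*} dV\Bigr)^{p/p^*} \| u \|_{L^{p^*}(\supp \eta)}^{p^*-p},
\]
the smallness of the tail $\| u \|_{L^{p^*}(\Mb^n \setminus B_R(o))}$ allows to \emph{absorb} the critical term into the left-hand side, provided $R$ is chosen large enough (independently of $\beta$). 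The resulting clean reverse Hölder inequality reads
\[
\Bigl(\int_{\Mb^n} \eta^{p^*} u^{(\beta+p-1)p^*/p}\, dV\Bigr)^{p/p^*} \;\le\; C'(\beta,p) \int_{\Mb^n} u^{\beta+p-1}\, |\nabla \eta|^p\, dV.
\]

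The iteration is then set up in the standard fashion: choose radii $R_k\uparrow 2R$ with $R_0=R$, cutoffs $\eta_k$ with $\eta_k=0$ on $B_{R_k}(o)$, $\eta_k=1$ outside $B_{R_{k+1}}(o)$, $|\nabla \eta_k|\le C\,2^k/R$, and exponents $q_k := p^* \alpha^k$, $\alpha := p^*/p>1$, corresponding to $\beta_k + p - 1 = q_k$. Writing $\Phi_k := \| u \|_{L^{q_k}(\Mb^n \setminus B_{R_k}(o))}$, the inequality above yields $\Phi_{k+1} \le [C_1\, 2^{kp}/R^p]^{1/q_k}\, \Phi_k$. The product telescopes since $\sum_k 1/q_k < \infty$, and passing to the limit $k\to\infty$ produces the key estimate
\[
\| u \|_{L^\infty(\Mb^n \setminus B_{2R}(o))} \;\le\; C(n,p)\, R^{-(n-p)/p}\, \| u \|_{L^{p^*}(\Mb^n \setminus B_R(o))}.
\]
The right-hand side tends to $0$ as $R\to\infty$, giving $u(x)\to 0$ as $\dist(x,o)\to\infty$.

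The main obstacle, as highlighted by the authors, is that one cannot rely on local Sobolev inequalities or translation invariance to run the usual Euclidean absorption step for the critical nonlinearity. The device that overcomes this is to perform the iteration globally outside a ball, using only the manifold-wide Sobolev inequality, and exploiting the smallness of the $L^{p^*}$ tail as the replacement for the ``concentration'' absorbed in the Euclidean proofs. A subsidiary technical point is the admissibility of the test function $\eta^p u^\beta$ for each $\beta$ in the iteration; this is handled by a standard truncation $\min(u,L)$ with $L\to\infty$, combined with the fact that $u\in\dot W^{1,p}(\Mb^n)$ and $\eta$ has compact support or is cut off against a second cutoff $\chi_{B_M}$ with $M\to\infty$.
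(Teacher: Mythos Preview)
Your $C^1$ argument is fine and matches the paper's. The gap is in the absorption step of your Moser iteration. When you test with $\eta^p u^\beta$ and pass to $v=u^{(\beta+p-1)/p}$, the Caccioppoli inequality you obtain carries a constant that grows like $\beta^{p-1}$ in front of the critical term: from
\[
\beta \int \eta^p u^{\beta-1}|\nabla u|^p \le \int \eta^p u^{\beta+p^*-1} + \text{(cross term)}
\]
and $\int \eta^p|\nabla v|^p = \big(\tfrac{\beta+p-1}{p}\big)^p \int \eta^p u^{\beta-1}|\nabla u|^p$, one gets $\int \eta^p|\nabla v|^p \le C\,\beta^{p-1}\int \eta^p u^{\beta+p^*-1} + \ldots$, and after Sobolev and H\"older the term to be absorbed is $C\,\beta^{p-1}\,\|u\|_{L^{p^*}(\supp\eta)}^{p^*-p}\,\|\eta v\|_{p^*}^p$. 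Thus the smallness condition on the tail needed for absorption is $\|u\|_{L^{p^*}(\Mb^n\setminus B_R)}^{p^*-p}\lesssim \beta^{-(p-1)}$, which \emph{does} depend on $\beta$. Since $\beta_k\to\infty$ along the iteration, a single choice of $R$ cannot serve for all steps, and your ``clean reverse H\"older'' inequality is not available uniformly in $k$. The telescoping product and the final $L^\infty$ bound therefore do not follow.

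The paper avoids this difficulty altogether by exploiting Lemma~\ref{lemma-bk} (which you already cite for regularity) \emph{inside} the iteration rather than only beforehand: it bounds $\int \xi_k\, u^{p^*+\alpha_k} \le \|u\|_\infty^{p^*-p}\int \xi_k\, u^{p+\alpha_k}$, which turns the problem into a subcritical one and removes any need for absorption. The iteration is then run on \emph{balls} $B_{R_k}(o_i)$ of shrinking radii around a moving sequence $o_i\to\infty$, yielding $\|u\|_{L^\infty(B_{R_\infty}(o_i))}\le A\,\|u\|_{L^{p+\alpha_0}(B_2(o_i))}$, and the right-hand side tends to $0$ because $u\in L^{p+\alpha_0}(\Mb^n)$. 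Your ``exterior'' geometry could be made to work too, but only after replacing the absorption by the $L^\infty$ bound; at that point the argument becomes essentially the paper's, and your stated estimate with the explicit factor $R^{-(n-p)/p}$ (which relied on only the $|\nabla\eta|^p$ term surviving) would no longer hold as written, though the weaker bound $\|u\|_{L^\infty(\Mb^n\setminus B_{2R})}\le C(n,p,\|u\|_\infty)\,\|u\|_{L^{p^*}(\Mb^n\setminus B_R)}$ would still suffice for the conclusion.
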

	
In particular, this and the previous result hold on every Cartan-Hadamard manifold.
	
\begin{proof}
Note that \eqref{equation-pl-apriori}, written in local coordinates, is a weighted Euclidean $p$-Laplace equation, and hence one can apply for instance the regularity results of \cite[Theorem 1]{Tolks} (see also \cite{DiB}) to infer that $ u \in C^1(\Mb^n) $ (actually the gradient of $u$ is locally $\alpha$-H\"older continuous, for some $\alpha \in (0,1) $ that may vary from compact set to compact set. We refer also to \cite{MZ} for a more complete account of the literature concerning the regularity theory of $p$-Laplace-type equations).
	To prove the vanishing at infinity, we will exploit a localized version of the Moser iteration technique.	Let $ o_i \in \Mb^n $ be any sequence such that $ \lim_{i \to \infty} \dist(o_i,o) = +\infty $. Clearly, proving the thesis amounts to showing that
	\begin{equation}\label{k-lim}
	\lim_{i \to \infty} u(o_i) = 0 \, .
	\end{equation}
	Let $  \xi \in C^\infty([0,+\infty) )$ be a nonincreasing cut-off function satisfying
	$$
	\xi(r) = 1 \quad \text{for every } r \in [0,1] \, , \qquad \xi(r)=0 \quad \forall r \ge 2 \, , \qquad 0 \le \xi(r) \le 1 \quad \text{for every } r \in (1,2)  \, ,
	$$
	and consider the (decreasing) sequence of radii
	\begin{equation}\label{def-ri}
	R_{k+1} = \left[ 1 - \frac{1}{2(k+1)^2} \right] R_k \quad \forall k \in \N \, , \qquad R_0 = 2 \, .
	\end{equation}
	Note that
	$$
  	R_\infty := \lim_{k \to \infty} R_k \in (0,2),
	$$
	since 
	$$
	\prod_{k=0}^\infty \left[ 1 - \frac{1}{2(k+1)^2} \right] > 0 \, .
	$$
	We construct the following sequence of cut-off functions on $ \Mb^n $:
	$$
	\xi_k(x) := \xi^n\!\left(  \frac{\mathrm{dist}(x,o_i)-R_{k+1}}{R_k - R_{k+1}} + 1 \right) \qquad \forall x \in \Mb^n \, .
	$$
	Since each $ \xi_k $ is radial about $ o_i $ and the sequence $ R_k $ fulfills \eqref{def-ri}, the following estimates hold:
		\begin{equation}\label{est-grad-i}
		  \left| \nabla \xi_k^{\frac 1 p}(x) \right| \le  \frac{ 2n \, \| \xi' \|_\infty \left(k+1\right)^2}{p \, R_\infty} \chi_{ B_{R_k}(o_i) \setminus B_{R_{k+1}}(o_i) }(x)  \qquad \forall x \in \Mb^n \, .
		\end{equation}

	\begin{equation}\label{id-1}
	\begin{aligned}
 \frac{p^p(1+\alpha_k)}{(p+\alpha_k)^p} \int_{ \Mb^n }  \xi_k \left|  \nabla u^{ 1+\frac{\alpha_k}{p} } \right|^p   d{V}  = &\,  \int_{ \Mb^n } \xi_k \, u^{p^\ast+\alpha_k} \, d{V} \\
 & \, - \left( \frac{p}{p+\alpha_k} \right)^{p-1} \int_{ \Mb^n } u^{ 1+\frac{\alpha_k}{p} } \nabla \xi_k \cdot \nabla u^{1+\frac{\alpha_k}{p}} \left| \nabla u^{1+\frac{\alpha_k}{p}} \right|^{p-2}  d{V} \, .
  \end{aligned}
	\end{equation}
	Note that the second term on the right hand side can be bounded, using H\"older's and Young's inequalities, by 
	$$
	\begin{aligned}
	 & \left| \left( \frac{p}{p+\alpha_k} \right)^{p-1} \int_{ \Mb^n } u^{ 1+\frac{\alpha_k}{p} } \nabla \xi_k \cdot \nabla u^{1+\frac{\alpha_k}{p}} \left| \nabla u^{1+\frac{\alpha_k}{p}} \right|^{p-2}  d{V}  \right| \\
	   & \qquad = \,  \frac{p^p}{\left( p+\alpha_k \right)^{p-1}} \left|  \int_{ \Mb^n } u^{1+\frac{\alpha_k}{p}}  \nabla \xi_k^{\frac 1 p} \cdot \nabla u^{1+\frac{\alpha_k}{p}} \left| \nabla u^{1+\frac{\alpha_k}{p}} \right|^{p-2}\xi_k^{\frac {p-1}{p}} \, d{V} \right| \\
	& \qquad \le \, \frac{p^p}{\left( p+\alpha_k \right)^{p-1}} \left(  \int_{ \Mb^n } \left| \nabla \xi_k^{\frac 1 p} \right|^p  u^{p+ \alpha_k }   \, d{V} \right)^{\frac 1 p } \left(  \int_{ \Mb^n } \xi_k \left| \nabla u^{1+\frac{\alpha_k}{p}} \right|^p d{V}  \right)^{\frac {p-1} p } \\
	  & \qquad \le \, \left( \frac{p}{1+\alpha_k} \right)^{p-1} \int_{ \Mb^n } \left| \nabla \xi_k^{\frac 1 p} \right|^p  u^{p+ \alpha_k }   \, d{V} +  \frac{p-1}{p} \, \frac{p^p(1+\alpha_k)}{(p+\alpha_k)^p}  \int_{ \Mb^n }  \xi_k \left|  \nabla u^{1+\frac{\alpha_k}{p}} \right|^p   d{V} \, .
	\end{aligned}
	$$
	Therefore, from \eqref{id-1} we infer that
	\begin{equation}\label{id-1'}
	p^{p-1} \frac{1+\alpha_k}{(p+\alpha_k)^p} \int_{\Mb^n}  \xi_k \left|  \nabla u^{ 1+\frac{\alpha_k}{p} } \right|^p   d{V}  \le \left( \frac{p}{1+\alpha_k} \right)^{p-1} \int_{ \Mb^n } \left| \nabla \xi_k^{\frac 1 p} \right|^p  u^{p+ \alpha_k }   \, d{V} + \int_{\Mb^n} \xi_k \, u^{p^\ast+\alpha_k} \, d{V} \, .
	\end{equation}
	On the other hand, by convexity
	$$
	 \int_{ \Mb^n }  \left|  \nabla \!\left( \xi_k^{\frac 1 p} u^{1+\frac{\alpha_k}{p}} \right) \right|^p d{V} \le \left( \frac{p}{p-1} \right)^{p-1} \int_{ \Mb^n } \left| \nabla \xi_k^{\frac 1 p} \right|^p  u^{p+ \alpha_k }   \, d{V} + p^{p-1}  \int_{ \Mb^n }  \xi_k \left|  \nabla u^{1+\frac{\alpha_k}{p}} \right|^p   d{V} \, .
	$$
	This estimate, combined with \eqref{id-1'}, gives
	\begin{multline}\label{est-k1}
	\frac{1+\alpha_k}{(p + \alpha_k )^p} \int_{ \Mb^n }  \left|  \nabla \!\left( \xi_k^{\frac 1 p} u^{1+\frac{\alpha_k}{p}} \right) \right|^p   d{V}  \le \left\| u \right\|_\infty^{p^\ast-p} \int_{ \Mb^n } \xi_k \, u^{p+\alpha_k} \, d{V}  \\
	 +  \left(  \frac{p}{p-1} \right)^{p-1} \left[  \left( \frac{p-1}{1+\alpha_k} \right)^{p-1} + \frac{1+\alpha_k}{\left( p+\alpha_k \right)^p} \right] \int_{ \Mb^n } \left| \nabla \xi_k^{\frac 1 p} \right|^p  u^{p+ \alpha_k }   \, d{V} \, .
	\end{multline}
	Now, for $\alpha_0 \ge p^*-p$, we pick the sequence $ \alpha_k $ as follows: 
	 \begin{equation}\label{def-sigma-bis}
	\alpha_{k+1} = p^\ast - p + \frac{p^\ast}{p} \alpha_k \qquad \implies \qquad \alpha_k = \left(  \frac{p^\ast}{p} \right)^{k}\left( \alpha_0 + p \right) - p \, .
	\end{equation}
	From here on, for the sake of readability, we will let $ A $ denote a general positive constant which is independent of $  k $, but may depend on $ \alpha_0 $, $ n $, $p$, $ R_\infty $, $ \xi $, $ \|  u \|_\infty  $ and change from line to line. We also recall that $C_{n,p}$ denotes the constant of the Sobolev embedding in \eqref{generic-sobolev-gen}. Having that in mind, by virtue of \eqref{est-grad-i} and \eqref{def-sigma-bis} estimate \eqref{est-k1} entails 
	$$
	\begin{aligned}
     \frac{1}{C_{n,p}^p} \left( \int_{ B_{R_{k+1}}(o_i) } u^{ p +\alpha_{k+1} } \,  d{V} \right)^{\frac {p}{p^\ast}} &\le \,  \frac{1}{C_{n,p}^p} \left( \int_{ \Mb^n } \xi_k^{\frac{p^\ast}{p}} u^{p^\ast+\frac{p^\ast}{p} \alpha_k} \, d{V} \right)^{\frac {p}{p^\ast}}\\
      &\le  \, \int_{ \Mb^n }  \left|  \nabla \!\left( \xi_k^{\frac 1 p} u^{1+\frac{\alpha_k}{p}} \right) \right|^p   d{V} 
      \le  \, A^{k+1}  \left\| u \right\|_{ L^{p+\alpha_k}\left( B_{R_k}(o_i) \right) }^{p+\alpha_k} ,
     \end{aligned}
	$$
	namely 
	$$
	\left\| u \right\|_{ L^{p+\alpha_{k+1}}\left( B_{R_{k+1}}(o_i) \right) } \le A^{\frac{k+1}{p + \alpha_k}} \left\| u \right\|_{ L^{p+\alpha_k}\left( B_{R_k}(o_i) \right) } \le  A^{ \sum_{h=0}^{k} \frac{h+1}{p+\alpha_h}} \left\| u \right\|_{ L^{p+\alpha_0}\left( B_{2}(o_i) \right) }   ,
	$$
	so that by letting $ k \to \infty $ we end up with $\left\| u \right\|_{ L^{\infty}\left( B_{R_{\infty}}(o_i) \right) } \le A \left\| u \right\|_{ L^{p+\alpha_0}\left( B_{2}(o_i) \right) }  $,
	which in turn yields \eqref{k-lim} upon letting $ i \to \infty $, since $ u \in L^{p+\alpha_0}(\Mb^n) $.
	\end{proof}
	
The strategy of proof of Theorem \ref{rigidity-optimal} is a suitable combination of the celebrated symmetrization tools introduced in \cite[Lemma 1]{Talenti} (see also the simultaneous paper \cite{Aubin}) and adapted to the manifold setting in \cite[Proposition 8.2]{Hebey}. There, as mentioned in the Introduction, the author proves that the validity of the Cartan-Hadamard conjecture ensures that the optimal constant in \eqref{generic-sobolev-gen} is indeed Euclidean for every Cartan-Hadamard manifold, but existence/nonexistence of optimal functions is not investigated. Before recalling some basics of the radial symmetrization technique, we point out that for our strategy to work it is crucial that $u$ and its superlevel sets are bounded, which is guaranteed by Lemma \ref{lemma-bk} and Proposition \ref{thm-vanishing}.

Given a measurable function $ f : \Mb^n \to \mathbb{R}^+ $ such that 
$$
V \! \left( \left\{ x \in \Mb^n : \, f(x) > t \right\} \right) < +\infty \qquad \forall t>0 \, , $$
we can introduce its \emph{Euclidean radially decreasing rearrangement} $ f^\star : \R^n \to \R^+ $ by setting 
$$
f^\star(y) := \int_0^{+\infty} \chi_{ \left\{ x \in \Mb^n : \, f(x) > t  \right\}^\star }(y) \, dt \qquad \forall y \in \mathbb{R}^n \, ,
$$
where, for every measurable set $ A \subset \Mb^n $ of finite volume, $ A^\star \subset \R^n $ denotes the Euclidean ball centered at the origin having the same (Euclidean) volume as $ A $, namely $ {V}(A)=|A^\star| $. By construction $ f^\star  $ is a (measurable) function that depends only on the variable $ |y| $. and is nonincreasing with respect to it. With some abuse of notation, for the sake of readability, below we will sometimes write $ f^\star(|y|) $. Since the superlevel sets of $ f^\star $ have the same Lebesgue measure as the Riemannian volume measure of the corresponding superlevel sets of $f$, thanks to the classical layer-cake representation (see for example \cite[Theorem 1.13]{LL}) the two functions also share $ L^q $ norms:
\begin{equation*}\label{layer-cake}
\int_{\mathbb{R}^n} \left( f^\star \right)^q dy = \int_{\Mb^n} f^q \, d {V} \qquad \forall q \in [1,\infty) \, .
\end{equation*}

The last key ingredient we need is the so-called \emph{coarea formula}. This is a well-established result originally due to Federer \cite[Theorem 3.1]{Fed}, and later extended to merely $ W^{1,1}_{\mathrm{loc}} $ functions, up to choosing a precise representative (we refer to \cite{MSZ} and the literature quoted therein). 
\begin{proposition}\label{coarea}
Let $ f : \Mb^n \to \R $ be a locally Lipschitz function and $ g : \Mb^n \to \R^+ $ a measurable function. Then it holds
\begin{equation}\label{formula-coarea}
\int_{\Mb^n} g \left| \nabla f \right| dV = \int_{\R} \int_{f^{-1}(\{s\})} g \, d\sigma \, ds \, ,
\end{equation} 
where $ d\sigma $ stands for the $(n-1) $-dimensional Hausdorff measure induced by $ dV $.
\end{proposition}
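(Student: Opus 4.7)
The plan is to reduce the Riemannian coarea formula to its Euclidean counterpart (classical after Federer, and extended to locally Lipschitz / $W^{1,1}_{\mathrm{loc}}$ maps in the references cited) by working in a local chart and using a partition of unity. Since both sides of \eqref{formula-coarea} are linear and $\sigma$-additive in $g$, it is enough to prove the identity when $g$ is supported in a single coordinate chart $(U,\varphi)$; the global statement is then recovered by writing $g = \sum_\alpha g \eta_\alpha$ for a smooth partition of unity $\{\eta_\alpha\}$ subordinate to a countable atlas of charts and summing.

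In such a chart, let $G = (g_{ij})$ denote the matrix of metric coefficients, set $\tilde f := f \circ \varphi^{-1}$, $\tilde g := g \circ \varphi^{-1}$, and $\Omega := \varphi(U) \subset \R^n$. Then $\tilde f$ is locally Lipschitz on $\Omega$ (since $\varphi^{\pm 1}$ are bilipschitz on compact subsets with respect to Euclidean and Riemannian distances), the Riemannian volume reads $dV = \sqrt{\det G}\, dx$, and by Rademacher's theorem $|\nabla f|^2 = G^{ij} \partial_i \tilde f\, \partial_j \tilde f =: |\nabla_g \tilde f|^2$ pointwise $\mathcal L^n$-a.e. Hence
\[
\int_{\Mb^n} g\,|\nabla f|\, dV = \int_\Omega \tilde g \, |\nabla_g \tilde f|\, \sqrt{\det G}\, dx.
\]
The crucial ancillary identity is the transformation rule between the $(n-1)$-dimensional Hausdorff measure $d\sigma$ induced by $g$ on a regular level set $\{\tilde f = s\}$ and the Euclidean one:
\[
d\sigma = \frac{|\nabla_g \tilde f|\,\sqrt{\det G}}{|\nabla_e \tilde f|}\, d\mathcal H^{n-1}_e
\]
at $\mathcal H^{n-1}$-a.e.\ point of differentiability of $\tilde f$ at which $\nabla_e \tilde f \ne 0$. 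Given this, applying the Euclidean coarea formula from \cite{Fed,MSZ} to the measurable integrand $\tilde h := \tilde g\,|\nabla_g \tilde f|\sqrt{\det G}/|\nabla_e \tilde f|$ yields
\[
\int_\Omega \tilde h\, |\nabla_e \tilde f|\, dx = \int_\R \int_{\tilde f^{-1}(s)} \tilde h\, d\mathcal H^{n-1}_e\, ds,
\]
whose left-hand side matches the bulk Riemannian integral above, and whose right-hand side equals $\int_\R \int_{f^{-1}(s)} g\, d\sigma\, ds$ thanks to the transformation rule. The critical set $\{\nabla_e \tilde f = 0\}$ contributes zero to both sides by the Sard-type statement built into the Euclidean coarea formula for Lipschitz (and $W^{1,1}_{\mathrm{loc}}$) maps, hence also $\{\nabla_g \tilde f = 0\}$, which coincides with it, does not spoil the matching.

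The main obstacle is justifying the surface-measure transformation rule on almost every regular level set. At a point of differentiability of $\tilde f$ where $\nabla_e \tilde f \ne 0$, the level set is locally the Euclidean graph of a Lipschitz function over the hyperplane orthogonal to $\nabla_e \tilde f$; computing the $g$-induced area element from the Gram determinant of a frame consisting of tangent vectors to this graph plus the unit $g$-normal in the direction of $\nabla_g \tilde f$ gives precisely the stated ratio. The passage from locally Lipschitz to merely $W^{1,1}_{\mathrm{loc}}$ representatives, needed for the applications in the paper, is handled in \cite{MSZ}, so the only additional work is verifying that the hypotheses there apply chart by chart, which is routine since the metric is smooth and the Riemannian and Euclidean Sobolev classes coincide locally.
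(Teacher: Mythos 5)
The paper does not actually prove Proposition \ref{coarea}: it is quoted as a known result, with the Euclidean case attributed to Federer \cite{Fed} and the extension to $W^{1,1}_{\mathrm{loc}}$ representatives to \cite{MSZ}. Your chart-by-chart reduction therefore does more than the paper does, and it is the standard, correct way to pass from the Euclidean to the Riemannian statement; I see no gap of substance. The two points that carry the weight are exactly the ones you isolate. First, the surface-measure transformation rule: for a Euclidean-orthonormal frame $(e_1,\dots,e_{n-1},\nu)$ with $\nu=\nabla_e\tilde f/|\nabla_e\tilde f|$, writing $A$ for the orthogonal matrix with these columns and applying Cramer's rule to $A^TGA$ gives
\[
\det\big(g(e_i,e_j)\big)_{i,j\le n-1}=\det G\cdot \nu^TG^{-1}\nu
=\left(\frac{|\nabla_g\tilde f|\,\sqrt{\det G}}{|\nabla_e\tilde f|}\right)^{\!2},
\]
which is your claimed density; note that the Gram determinant involved is only the tangential $(n-1)\times(n-1)$ one, the normal entering through this cofactor identity rather than as an extra row of the Gram matrix, so your phrasing (``tangent vectors plus the unit $g$-normal'') should be read in that sense. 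Second, to use this on almost every level set of a merely Lipschitz $\tilde f$ one needs that, for a.e.\ $s$, the level set is countably $(n-1)$-rectifiable with approximate tangent plane orthogonal to $\nabla_e\tilde f$ at $\mathcal{H}^{n-1}$-a.e.\ point, and that the critical part $\{\tilde f=s\}\cap\{\nabla_e\tilde f=0\}$ is $\mathcal{H}^{n-1}$-null for a.e.\ $s$; both facts are indeed part of the Euclidean coarea package you invoke, and the comparison of Riemannian and Euclidean Hausdorff measures on a rectifiable set via the tangential Gram determinant is standard. The only cosmetic fix is that your integrand $\tilde h$ is undefined on $\{\nabla_e\tilde f=0\}$ and should be set equal to zero there, which is harmless since that set contributes nothing to either side. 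In short: your proof is correct and self-contained where the paper simply cites the literature; what the paper's choice buys is brevity and the $W^{1,1}_{\mathrm{loc}}$ generality of \cite{MSZ} without re-deriving it, while your argument makes transparent exactly which Euclidean facts and which pointwise metric identities are being used.
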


We are now in position to prove the main result of this section. 

\begin{proof}[Proof of Theorem \ref{rigidity-optimal}]
With no loss of generality, we can and will assume that $ u $ is nonnegative, thanks to the plain fact that if $ u $ is an optimal function, so is $ |u| $. Let us then introduce the volume function 
$$
\mathsf{V}(t) := V \! \left( \left\{ x \in \Mb^n : \, u(x) > t \right\} \right) \qquad \forall t>0 \, . 
$$
Clearly $ \mathsf{V}(t) $ is finite for all $t>0 $ since $ u \in L^{p^\ast}(\Mb^n) $. Moreover, by definition, it is a nonincreasing function, thus its pointwise derivative $ \mathsf{V}'(t) $ exists, is finite and nonpositive for almost every $ t>0 $. 

As observed above, we know that $u$ is a nonnegative energy solution to the Euler-Lagrange equation \eqref{equation-pl-apriori}, up to a multiplication by a constant. By virtue of Lemma \ref{lemma-bk} and Proposition \ref{thm-vanishing}, we can assert that it is bounded, of class $C^1(\Mb^n)$, and vanishes at infinity. In addition, by the strong maximum principle (see for example \cite[Proposition 6.4]{PRS}), we have that $u$ is strictly positive on the whole $\Mb^n$, so that it complies with \eqref{equation-pl}.

By regularity, the coarea formula \eqref{formula-coarea} holds with $ f = u $, so that upon choosing $ g $ as the characteristic function of each superlevel set $ \left\{ x \in \Mb^n : \, u(x) > t \right\}  $ we end up with the identity  
\begin{equation*}\label{formula-coarea-1}
\int_{u^{-1}\left((t,+\infty)\right)} \left| \nabla u \right| dV = \int_t^{+\infty} \sigma\!\left(u^{-1}(\{s\})\right) ds \qquad \forall t>0 \, ,
\end{equation*} 
which yields 
\begin{equation}\label{formula-coarea-2}
\frac{d}{dt} \int_{u^{-1}\left((t,+\infty)\right)} \left| \nabla u \right| dV = - \sigma\!\left(u^{-1}(\{t\})\right) \qquad \text{for a.e.~} t>0 \, .
\end{equation} 
Similarly, by using the same function $g$ multiplied by $ \left| \nabla u \right|^{p-1} $ we obtain 
\begin{equation}\label{formula-coarea-2-bis}
\frac{d}{dt} \int_{u^{-1}\left((t,+\infty)\right)} \left| \nabla u \right|^{p} dV = - \int_{u^{-1}\left( \{ t \} \right)} \left| \nabla u \right|^{p-1} d\sigma \qquad \text{for a.e.~} t>0 \, .
\end{equation} 
Note that the coarea formula itself guarantees that 
\begin{equation}\label{formula-coarea-3}
0 < \int_{u^{-1}\left( \{ t \} \right)} \left| \nabla u \right|^{p-1} d\sigma  < +\infty \qquad \text{for a.e.~} t \in \left(0, \left\| u \right\|_\infty \right) .
\end{equation} 
On one hand, this is easily seen by testing it with the characteristic function of the set of critical points $  \left\{ x \in \Mb^n : \, \left| \nabla u(x) \right| = 0 \right\} $, which makes sure that for a.e.~$ t>0 $ the function $ x \mapsto \left| \nabla u (x) \right| $ is $ \sigma $-a.e.~positive on $ u^{-1}(\{t\}) $, and $ \sigma\!\left(u^{-1}(\{t\})\right)>0 $ for every $ t $ as in \eqref{formula-coarea-3} by virtue of the continuity of $u$. On the other hand, finiteness follows from \eqref{formula-coarea-2-bis}.

The derivative in \eqref{formula-coarea-2} can be bounded from below by resorting to H\"older's inequality. Indeed, for incremental ratios we have (for all $ t,h>0 $)
\begin{multline*}
 \frac{ \int_{u^{-1}\left((t,+\infty)\right)} \left| \nabla u \right| dV - \int_{u^{-1}\left((t+h,+\infty)\right)} \left| \nabla u \right| dV }{h} \\
\le  \left( \frac{ \int_{u^{-1}\left((t,+\infty)\right)} \left| \nabla u \right|^p dV - \int_{u^{-1}\left((t+h,+\infty)\right)} \left| \nabla u \right|^p dV }{h} \right)^{\frac 1 p} \left( \frac{\mathsf{V}(t)-\mathsf{V}(t+h)}{h} \right)^{\frac{p-1}{p}} ,
\end{multline*}
so that, by passing to the limit as $ h \to 0 $, and using \eqref{formula-coarea-2}--\eqref{formula-coarea-2-bis}, we deduce the bound
\begin{equation}\label{coarea-key}
\sigma\!\left(u^{-1}(\{t\})\right) \le \left( \int_{u^{-1}\left( \{ t \} \right)} \left| \nabla u \right|^{p-1} d\sigma \right)^{\frac{1}{p}} \left| \mathsf{V}'(t) \right|^{\frac{p-1}{p}} \qquad \text{for a.e.~} t > 0 \, .
\end{equation}
Note that estimate \eqref{coarea-key} itself, combined with \eqref{formula-coarea-3}, ensures that $ \mathsf{V}'(t) $ is nonzero for almost every $ t \in (0,\| u \|_\infty) $, whence 
\begin{equation}\label{coarea-key-bis}
\frac{\sigma^p\!\left(u^{-1}(\{t\})\right)}{\left| \mathsf{V}'(t) \right|^{p-1}} \le \int_{u^{-1}\left( \{ t \} \right)} \left| \nabla u \right|^{p-1} d\sigma \qquad \text{for a.e.~} t \in \left(0, \left\| u \right\|_\infty \right) .
\end{equation}
If the Cartan-Hadamard conjecture in dimension $n$ holds, then
\begin{equation}\label{ch-conj-1} 
n \, \omega_n^{\frac 1 n} \, \mathsf{V}^{\frac{n-1}{n}}(t) \le \mathrm{Per}\!\left( u^{-1}\!\left((t,+\infty) \right) \right) \le \sigma\!\left(u^{-1}(\{t\})\right) \qquad \forall t>0 \, .
\end{equation}
(Recall that $ \mathrm{Per}(\Omega) = \sigma(\partial\Omega) $ provided $ \Omega $ is smooth enough, while in general we have that $ \mathrm{Per}(\Omega) \le \sigma(\partial\Omega)$ as consequence of the structure theorem for sets with finite perimeter, see for example \cite{AFP, Mag}). 

It is worth observing that in \eqref{ch-conj-1} we have implicitly exploited two additional key properties of $u$: continuity (so that $ \partial u^{-1}((t,+\infty)) \subseteq u^{-1}(\{t\}) $), and boundedness of the superlevel sets (which allows us to apply \eqref{ispo-ineq}), ensured by Proposition \ref{thm-vanishing}. As a consequence, integrating \eqref{coarea-key-bis} along with \eqref{formula-coarea-2-bis} entails 
\begin{equation}\label{ch-conj-2}
\int_0^{\left\| u \right\|_\infty} n^p \, \omega_n^{\frac p n} \, \frac{\mathsf{V}^{\frac{n-1}{n}p}(t)}{\left| \mathsf{V}'(t) \right|^{p-1}} \, dt \le \int_0^{\left\| u \right\|_\infty} \frac{\sigma^p\!\left(u^{-1}(\{t\})\right)}{\left| \mathsf{V}'(t) \right|^{p-1}} \, dt \le  \int_{u^{-1}\left((0,\| u \|_\infty) \right)} \left| \nabla u \right|^{p} dV = \int_{\Mb^n} \left| \nabla u \right|^p dV  \, .
\end{equation}
Let us consider the Euclidean radially decreasing rearrangement $u^\star $ of $ u $, which by definition shares with $ u $ the same volume function $ \mathsf{V}(t) $ and the same $ L^\infty $ norm. It is straightforward to check that it is continuous (otherwise $ \mathsf{V}'(t) $ would vanish in an interval). We claim that it is locally Lipschitz. Indeed, upon integrating \eqref{formula-coarea-2} and using \eqref{sobolev-isop} (here the optimal value of the isoperimetric constant is inessential), for all $t,h>0 $ we have: 
\begin{equation}\label{ch-lip-radial-1}
\begin{aligned}
L_t \left( \mathsf{V}(t)-\mathsf{V}(t+h) \right) &\ge \int_{u^{-1}\left((t,t+h]\right)} \left| \nabla u \right| dV \\ &= \int_t^{t+h} \sigma\!\left(u^{-1}(\{s\})\right) ds
 \ge \frac{ \int_{t}^{t+h} \mathsf{V}^{\frac{n-1}{n}}(s) \, ds}{C_{n,1}}  \ge \frac{\mathsf{V}^{\frac{n-1}{n}}(t+h)}{C_{n,1}} \, h \, ,
\end{aligned}
\end{equation}
where $ L_t $ stands for the Lipschitz constant of $ u $ in $ u^{-1}\!\left( (t,+\infty) \right) $. Given any $ r_2>r_1>0 $ complying with $ 0 < u^\star(r_2)< u^\star(r_1) \le \| u \|_\infty $, from the definition of $ \mathsf{V}(t) $ it is clear that $ \mathsf{V}(u^\star(r_2)) \le \omega_n \, r_2^n $ and $ \mathsf{V}(u^\star(r_1)-\eps) \ge \omega_n \, r_1^n $ for arbitrarily small $ \eps>0 $, so that by putting $ t = u^\star(r_2) $ and $ t+h = u^\star(r_1)-\eps $ in \eqref{ch-lip-radial-1} we end up with 
\begin{equation*}\label{ch-lip-radial-2}
L_{u^\star(r_2)} \, \omega_n \left( r_2^n-r_1^n \right) \ge \frac{\omega_n^{\frac{n-1}{n}}}{C_{n,1}} \, r_1^{n-1}\left( u^\star(r_1)-\eps - u^\star(r_2) \right) ,
\end{equation*}
and this readily implies, upon letting $ \eps \to 0$ and $ r_1 \to r_2^- $, that $ u^\star $ is Lipschitz in the open set $ \left\{ x \in \R^n: \, u^\star(x)>u^\star(r_2) \right\} $ with constant $ L_{u^\star(r_2)} \, C_{n,1} \, n \, \omega_n^{ 1 / n} $. Because $ u^\star>0 $ everywhere, the claim follows.

At this stage, given the local Lipschitz regularity of $ u^\star $ (recall Proposition \ref{coarea}), we can repeat all the above computations with $ u $ replaced by $ u^\star $ (and $ \Mb^n $ by $ \R^n $).
Since $ \nabla u^\star $ is constant on every level set $ \left( u^\star \right)^{-1}\! \left( \{ t \} \right)  $, and the latter is the boundary of the Euclidean ball $ \left( u^\star \right)^{-1}\!( (t,+\infty) ) $, for almost every $  t \in (0,\| u \|_\infty) $ (where $ \exists \mathsf{V}'(t)<0 $) both \eqref{coarea-key-bis} and \eqref{ch-conj-1}, thus \eqref{ch-conj-2}, hold as identities, whence
\begin{equation*}\label{ch-conj-rad}
\int_0^{\left\| u \right\|_\infty} n^p \, \omega_n^{\frac p n} \, \frac{\mathsf{V}^{\frac{n-1}{n}p}(t)}{\left| \mathsf{V}'(t) \right|^{p-1}} \, dt = \int_{\R^n} \left| \nabla u^\star \right|^p dV  \, .
\end{equation*}
In particular, from \eqref{ch-conj-2}, we deduce the P\'olya-Szeg\H{o}-type inequality $ \| \nabla u^\star \|_{L^p (\R^n)} \le \| \nabla u \|_{L^p(\Mb^n)} $. However, because $ \| u^\star \|_{L^{p^\ast}(\R^n)} =  \| u \|_{L^{p^\ast}(\Mb^n)}$ and the optimal constant in \eqref{generic-sobolev-gen} is not smaller than the Euclidean one, the only possibility is that $ u^\star $ is also an optimal function for the $p$-Sobolev inequality in $ \R^n $, and therefore \eqref{ch-conj-2} is actually an identity. In particular, it holds 
$$
\int_0^{\left\| u \right\|_\infty} \frac{\sigma^p\!\left(u^{-1}(\{t\})\right) - n^p \, \omega_n^{\frac p n} \, \mathsf{V}^{\frac{n-1}{n}p}(t) }{\left| \mathsf{V}'(t) \right|^{p-1}} \, dt  = 0 \, ,
$$
which in view of \eqref{ch-conj-1} yields
$$
\sigma\!\left(u^{-1}(\{t\})\right) = \mathrm{Per}\!\left( u^{-1}\!\left((t,+\infty) \right) \right)  = n \, \omega_n^{\frac 1 n} \, \mathsf{V}^{\frac{n-1}{n}}(t) \qquad \text{for a.e.~} t \in (0,\| u \|_\infty) \, .
$$
Thanks to the rigidity result encompassed by the Cartan-Hadamard conjecture, this implies that almost every superlevel set $ A_t := u^{-1}((t,+\infty) )$ is isometric to a Euclidean ball of volume $ \mathsf{V}(t) $, up to a set of volume zero. In particular, we can deduce that $ \overline{A}_t $ is isometric (in the metric sense) to a closed Euclidean ball. Hence, since $ u $ is continuous, has no zeros and $ \Mb^n $ is noncompact, there exist a decreasing sequence $ t_k \to 0 $ and a corresponding increasing sequence $ R_k \to +\infty $ such that 
$$
\Mb^n = \bigcup_{k=0}^{\infty}  {\overline{A}_{t_k}} \, , \qquad \overline{A}_{t_k} \Subset {A}_{t_{k+1}} \quad \forall k \in \N \, ,
$$
where each $ {\overline{A}_{t_k}} $ is isometric to $ \overline{B}^e_{R_k} $, the latter symbol denoting the closed Euclidean ball of radius $ R_k $ centered at the origin.  This means that for all $ k \in \N $ one can find a bijective map $ T_k : \overline{B}^e_{R_k} \to {\overline{A}_{t_k}} $, along with its inverse $ S_k:  {\overline{A}_{t_k}}  \to \overline{B}^e_{R_k} $, such that 
\beq \label{iso-1}
\mathrm{dist}(T_k(\hat{x}),T_k(\hat{y})) =  \left| \hat{x}-\hat{y} \right| \quad \forall  \hat{x},\hat{y} \in \overline{B}^e_{R_k} \qquad \Longleftrightarrow \qquad
\mathrm{dist}(x,y) =  \left| S_k(x)-S_k(y) \right| \quad \forall  x,y \in {\overline{A}_{t_k}}
\eeq
and
\beq \label{iso-2}
 T_k(S_k(x)) = x \quad \forall x \in   {\overline{A}_{t_k}} \qquad \Longleftrightarrow \qquad S_k(T_k(\hat{x})) = \hat{x} \quad  \forall \hat{x} \in \overline{B}^e_{R_k} \, .
\eeq
If we drop the request that the Euclidean balls are centered at a common given point, then up to a translation in $ \R^n $ (that may depend on $ k $) we can assume that for a fixed $ x_0 \in \overline{A}_{t_0} $ and a corresponding $ \hat{x}_0 \in \overline{B}^e_{R_0} $ it holds    
\beq \label{iso-3}  
T_k(\hat{x}_0)=x_0 \quad \Longleftrightarrow  \quad S_k(x_0)=\hat{x}_0 \qquad \forall k \in \N \, .
\eeq
For notational convenience, we will not change symbols and still refer to such translated maps and balls as $ T_k$, $S_k $ and $ \overline{B}^e_{R_k} $, respectively. Since $ \Mb^n $ is a complete manifold and $ \left\{ \overline{A}_{t_k} \right\} $ is an exhaustion of $ \Mb^n $ such that $ \overline{A}_{t_k} \Subset {A}_{t_{k+1}}  $, for all $ r>0 $ one can pick $ k_r \in \N $ (large enough) satisfying
\beq \label{iso-4}  
 B_r(x_0) \subset \overline{A}_{t_k}  \quad \Longleftrightarrow \quad B_r^e(\hat{x}_0) \subset \overline{B}^e_{R_k} \qquad \forall k \ge k_r \, .
\eeq
By virtue of \eqref{iso-1}, \eqref{iso-3} and \eqref{iso-4}, we are in position to apply Ascoli-Arzel\`a theorem to infer that there exist two maps $ T : \R^n \to \Mb^n $ and $ S: \Mb^n \to \R^n $ such that (up to a subsequence)
$$
\lim_{k \to \infty} T_k( \hat{x} ) = T(\hat{x}) \quad \forall \hat{x} \in \R^n \qquad \text{and} \qquad \lim_{k \to \infty} S_k( x ) = S( x ) \quad \forall {x} \in \Mb^n \, ,
$$
with both limits occurring locally uniformly. As a result, by passing to the limit in \eqref{iso-1} and \eqref{iso-2}, we infer that $ T $, along with its inverse $S$, is in fact a (metric) isometry between $ \R^n $ and $ \Mb^n $; thus it is also a smooth isometry between Riemannian manifolds (see \cite{MySt,Palais}), and the thesis follows. Note that \eqref{instant} is then a direct consequence of  the results of \cite{Aubin,Talenti}.
\end{proof} 

\begin{remark}
In the final part of the above proof we took advantage of a purely metric argument, which uses very little of the particular structure of a Cartan-Hadamard manifold (noncompactness and completeness). Nonetheless, it would have been possible to exploit a more geometric one, by observing that $ \{ A_{t_k} \} $ is an exhaustion of flat open sets of $ \Mb^n $, since each $ A_{t_k} $ is isometric to a Euclidean ball up to a negligible set. As a result, the simply connected manifold $ \Mb^n $ is flat and thus  isometric to $\R^n$ thanks to the well-known characterization of flat manifolds.
\end{remark}

\section{Radial solutions to the (critical or supercritical) $p$-Laplace equation}\label{sec: radial}

In this section we focus on \emph{radial positive solutions} to the $p$-Laplace equation \eqref{LE}:
\[
-\Delta_p u = u^q \, , \quad u>0 \, ,  \qquad \text{on $\Mb^n$} \, ,
\]
where $ \Mb^n $ is an $n$-dimensional Cartan-Hadamard \emph{model} manifold associated to a corresponding ``model function'' $ \psi $ as in  \eqref{metric} and $ q\ge p^\ast -1$. We will take these assumptions for granted from here on.

\subsection{Preliminaries and basic properties of radial solutions}\label{subs:1}

First of all, we consider the radial $p$-Laplace equation on $\Mb^n$ with positive initial datum, that is 
\beq\label{radial pb}
\begin{cases}
\left(\psi^{n-1} \left|u'\right|^{p-2} u' \right)' = -\psi^{n-1} \, |u|^{q-1} \, u \qquad \text{for $r>0$} \, , \\
u'(0) = 0 \, , \qquad u(0) =\alpha>0 \, .
\end{cases}
\eeq
Existence and uniqueness of a local classical solution $u \in C^1([0,T))$, with $w:=|u'|^{p-2} u' \in C^1((0,T))$, can be established as in the Euclidean case $\psi(r) = r$, studied in \cite{FLS96, GuVe88} (see in particular the appendices in those papers); this follows from the fact that $\psi$ is regular and $\psi(r) = r+o(r)$ as $r \to 0$. As in \cite[Lemma 1.1.1]{FLS96}, one can actually show that $w \in C^1([0,T))$, with $w'(0) = - \alpha^q/n<0$. Since $w(0) = 0$, we deduce that $w$, and hence $u'$, are strictly negative in a neighborhood of $r=0$. We aim to show that $u$ can be globally extended on the whole interval $[0,+\infty)$ remaining positive, with $u'<0$ on $(0,+\infty)$. To this end, following a similar strategy to \cite{BFG}, we will take advantage of a \emph{Pohozaev-type} technique. In the sequel, by ``maximal existence interval'', we mean the largest interval $ I \equiv [0,T) $ (with possibly $ T=+\infty $) where $u$ is a classical solution to \eqref{radial pb}. 

\begin{remark}\label{rem: reg rad}
Note that, if $u \in W^{1,p}_{\loc}(\Mb^n) \cap L^\infty_{\loc}(\Mb^n)$ is a radial weak solution to \eqref{LE}, then $u$ can be regarded as a solution to \eqref{radial pb} in the sense specified above. In the proofs of Theorems \ref{thm: rig}-\ref{thm: decay}, we will always implicitly use this fact. Indeed, by \cite{DiB, Tolks} we have that $u \in C^1([0,+\infty))$, and integrating the equation down to $r=0$ this ensures that $|u'|^{p-2} u' \in C^1([0,+\infty))$. In the special case $ q=p^\ast-1 $ local boundedness is actually for free (one can adapt the proof of Lemma \ref{lemma-bk} above), whereas it is well known that for $ q>p^\ast-1 $ locally unbounded radial solutions with local finite energy can exist (see for example~\cite[Section 9]{QS}).   
\end{remark}

Given a solution to \eqref{radial pb}, let us introduce the associated \emph{energy function}
$$
F_u(r) := \frac{p-1}p \left|u'(r)\right|^{p} + \frac1{q+1} \left|u(r)\right|^{q+1},
$$
along with the \emph{Pohozaev function}
$$
P_u(r) := \left( \int_0^r \psi^{n-1} \, ds \right) F_u(r) + \frac{\psi^{n-1}(r)}{q+1} \left|u'(r)\right|^{p-2} u(r) \, u'(r) \, .  
$$
Note that the differential equation in \eqref{radial pb} can equivalently be written as
\beq\label{rad eq}
\left(\left|u'\right|^{p-2} u'\right)' + (n-1) \, \frac{\psi'}{\psi} \left|u'\right|^{p-2} u' + \left|u\right|^{q-1} u = 0 \, .
\eeq

\begin{lemma}\label{lem: glob ex}
If $u$ is a solution to \eqref{radial pb} defined in its maximal existence interval $I$, then both $u$ and $u'$ remain bounded in $I$.
\end{lemma}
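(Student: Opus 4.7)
The plan is to prove the lemma by showing that the energy function $F_u$ is non-increasing on $I$, which immediately bounds both $u$ and $u'$ in terms of the initial datum $\alpha$. This is simpler than invoking the Pohozaev function $P_u$; the latter is presumably introduced for use in later asymptotic results.

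First I would differentiate $F_u$. Since $F_u = \tfrac{p-1}{p}|u'|^p + \tfrac{1}{q+1}|u|^{q+1}$, a direct computation gives
\[
F_u'(r) = (p-1)|u'|^{p-2} u' u'' + |u|^{q-1} u \, u'.
\]
Using that $(|u'|^{p-2} u')' = (p-1)|u'|^{p-2} u''$ and substituting the equation \eqref{rad eq}, namely
\[
(p-1)|u'|^{p-2} u'' = -(n-1)\frac{\psi'}{\psi}|u'|^{p-2} u' - |u|^{q-1} u,
\]
we multiply by $u'$ and obtain
\[
F_u'(r) = -(n-1) \, \frac{\psi'(r)}{\psi(r)} \, |u'(r)|^{p}.
\]
Since $\Mb^n$ is a Cartan-Hadamard model manifold, $\psi$ is convex with $\psi(0)=0$ and $\psi'(0)=1$, so $\psi'(r)\ge 1$ and $\psi(r)>0$ on $(0,+\infty)$. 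Hence $F_u'\le 0$ on $I$, i.e., $F_u$ is monotone non-increasing.

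Consequently, for every $r \in I$,
\[
\frac{p-1}{p}|u'(r)|^{p} + \frac{1}{q+1}|u(r)|^{q+1} = F_u(r) \le F_u(0) = \frac{\alpha^{q+1}}{q+1}.
\]
From the second term we deduce $|u(r)| \le \alpha$, and from the first we get $|u'(r)| \le \bigl(\tfrac{p \, \alpha^{q+1}}{(p-1)(q+1)}\bigr)^{1/p}$. This gives the required global bounds on $u$ and $u'$ in $I$.

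The only subtle point is making sure the calculation of $F_u'$ is valid throughout $I$. Near $r=0$, $u'$ may be zero but the quantity $\tfrac{\psi'}{\psi}|u'|^p$ is well-defined and bounded thanks to $\psi(r)=r+o(r)$ and $u'(r)=O(r)$ (recall $w=|u'|^{p-2}u'\in C^1([0,T))$ with $w'(0)=-\alpha^q/n$, so $u'(r)=-\tfrac{\alpha^q}{n^{1/(p-1)}\,(p-1)^{???}}\,r^{1/(p-1)} +\mathrm{l.o.t.}$ for $p>2$, etc.); in any case the identity for $F_u'$ holds pointwise on $(0,+\infty)\cap I$ and $F_u$ is continuous up to $r=0$, so the monotonicity and the resulting a priori bounds hold on the whole maximal interval $I$.
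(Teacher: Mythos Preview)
Your proof is correct and follows essentially the same approach as the paper: compute $F_u'$ via the equation, obtain $F_u' = -(n-1)\tfrac{\psi'}{\psi}|u'|^p \le 0$, and conclude $F_u(r)\le F_u(0)=\alpha^{q+1}/(q+1)$. The only difference is that the paper justifies the differentiation more cleanly by writing $|u'|^p = |w|^{p/(p-1)}$ with $w=|u'|^{p-2}u'\in C^1(I)$, which yields $(|u'|^p)'=\tfrac{p}{p-1}u'\,w'$ without ever invoking $u''$; this sidesteps the regularity discussion in your last paragraph (and the stray ``???'' placeholder there) entirely.
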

\begin{proof}
We have:
\[
\left|u'\right|^p = \left|\left|u'\right|^{p-2} u'\right|^\frac{p}{p-1} ,
\]
so that $ \left|u'\right|^p $ is also $ C^1(I) $ and
\[
\left(\left|u'\right|^p\right)' = \frac{p}{p-1} \left| \left|u'\right|^{p-2} u'\right|^\frac{2-p}{p-1} \left|u'\right|^{p-2} u' \left(\left|u'\right|^{p-2} u'\right)' = \frac{p}{p-1} \, u' \left( \left|u'\right|^{p-2} u'\right)' .
\]
Thanks to \eqref{rad eq} we thus obtain 
\beq\label{Fprimo}
F_u' = u' \left(\left|u'\right|^{p-2} u'\right)'  + \left|u\right|^{q-1} u u' = - (n-1) \frac{\psi'}{\psi} \left|u'\right|^p \le 0 \, ,
\eeq
whence it follows that $F_u(r) \le F_u(0) = \alpha^{q+1}/(q+1)$, for every $r \in I$. This clearly implies that both $u$ and $u'$ remain bounded in $I$. 
\end{proof}

Integrating the equation, it is readily seen that if $ u>0 $ in $ I' \subset I$ then $ u'<0 $ in $ I' \setminus \{ 0 \} $. Hence, due to Lemma \ref{lem: glob ex} and standard ODE theory, at this point we have only two alternatives: either $u$ exists in the whole interval $[0,+\infty)$ remaining positive, with $u'<0$ on $ (0,+\infty) $, or there exists  $R>0$ such that $u>0$ and $u'<0$ in $(0,R)$, $u(R) = 0$ and $u'(R) \le 0$. We will prove that only the first alternative is admissible; in order to establish it, we need the following lemma. 

\begin{lemma}\label{lem: on P}
If $u$ is a solution to \eqref{radial pb} defined in its maximal existence interval $I$, then 
\[
P_u'(r) = K(r) \left|u'(r)\right|^p \qquad \text{for every } r \in I \, ,
\]
where $ K $ is a suitable function depending only on $ q,p,\psi,n $, such that $K(r) \le 0$ for all $r \ge 0$ and $K(r) = 0$ for some $r>0$ if and only if $q=p^*-1$ and $\psi''(s) = 0$ for every $s \in (0,r)$. In particular, we have that $P_u(r) \le 0$ for every $r \in I$.
\end{lemma}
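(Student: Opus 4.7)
The plan is to differentiate $P_u$ directly and exploit the structure of \eqref{rad eq} to recognize the resulting expression as a pointwise multiple of $|u'|^p$ whose coefficient has a sign that can be controlled via two independent inequalities.

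First I will compute $P_u'$ via the product rule. Setting $A(r) := \int_0^r \psi^{n-1}\,ds$, so that $A'(r) = \psi^{n-1}(r)$, and using \eqref{Fprimo} together with the identity $[(|u'|^{p-2}u')\,u]' = (|u'|^{p-2}u')'\,u + |u'|^p$, I substitute the expression for $(|u'|^{p-2}u')'$ given by \eqref{rad eq}. This produces two pairs of cancellations: the term $\tfrac{n-1}{q+1}\psi^{n-2}\psi'\,|u'|^{p-2}uu'$ coming from differentiating $\psi^{n-1}$ in the second summand of $P_u$ is canceled by the term $-\tfrac{n-1}{q+1}\psi^{n-2}\psi'\,|u'|^{p-2}uu'$ generated by the substitution, and the two occurrences of $\tfrac{\psi^{n-1}}{q+1}|u|^{q+1}$ (one from $\psi^{n-1}F_u$, one from the substitution) cancel as well. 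What remains is
\[
P_u'(r) = K(r)\,\bigl|u'(r)\bigr|^p, \qquad K(r) := \psi^{n-1}(r)\left(\frac{p-1}{p} + \frac{1}{q+1}\right) - (n-1)\,\frac{\psi'(r)}{\psi(r)}\int_0^r \psi^{n-1}\,ds \, ,
\]
which depends only on $p$, $q$, $n$ and $\psi$, as required.

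Next I will verify that $K \le 0$ by means of two independent bounds. The \emph{numerical} bound is $\tfrac{p-1}{p} + \tfrac{1}{q+1} \le \tfrac{n-1}{n}$: the function $q \mapsto \tfrac{1}{q+1}$ is strictly decreasing, and a direct computation at $q = p^\ast-1$ gives $\tfrac{p-1}{p} + \tfrac{n-p}{np} = \tfrac{n-1}{n}$, so equality holds if and only if $q=p^\ast-1$. The \emph{geometric} bound reads $(n-1)\,\tfrac{\psi'(r)\,\Theta(r)}{\psi(r)} \ge \tfrac{n-1}{n}$, or equivalently $\psi^n(r) \le n\,\psi'(r)\int_0^r \psi^{n-1}\,ds$; this follows from the identity $\psi^n(r) = n\int_0^r \psi'(s)\psi^{n-1}(s)\,ds$ together with the Cartan-Hadamard hypothesis, which renders $\psi$ convex with $\psi'(0)=1>0$, so that $\psi'$ is nondecreasing and $\psi'(s)\le \psi'(r)$ for $s\in[0,r]$. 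Equality at some $r>0$ forces $\psi'$ to be constant on $[0,r]$, hence $\psi''\equiv 0$ on $(0,r)$. Combining the two bounds yields $K(r)\le 0$, with precisely the characterization of equality stated in the lemma.

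Finally, since $\psi(0)=0$ gives $A(0)=0$ and $\psi^{n-1}(0)=0$, evaluation at $r=0$ gives $P_u(0) = 0$, and integrating $P_u' \le 0$ then yields $P_u(r) \le 0$ throughout $I$. The only point that demands some care is the algebraic cancellation in the computation of $P_u'$, which is in fact what motivates the specific coefficient $1/(q+1)$ in front of $\psi^{n-1}|u'|^{p-2}uu'$ in the definition of $P_u$; beyond this, I do not anticipate any obstacle, as convexity of $\psi$ is built into the Cartan-Hadamard assumption.
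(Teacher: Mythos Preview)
Your proposal is correct and follows essentially the same route as the paper: compute $P_u'$ via the product rule, use \eqref{Fprimo} and \eqref{rad eq} to obtain the same formula for $K$, and then split the sign analysis into the numerical bound $\tfrac{p-1}{p}+\tfrac{1}{q+1}\le\tfrac{n-1}{n}$ and a geometric bound coming from the convexity of $\psi$.

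The only noteworthy difference is in how you establish the geometric bound. The paper integrates by parts to write
\[
\int_0^r \psi^{n-1}\,ds = \frac{\psi^n(r)}{n\,\psi'(r)} + \frac{1}{n}\int_0^r \frac{\psi^n\,\psi''}{(\psi')^2}\,ds,
\]
thereby making the $\psi''$ contribution explicit in $K$. You instead use the more direct observation $\psi^n(r)=n\int_0^r \psi'\psi^{n-1}\,ds \le n\,\psi'(r)\int_0^r \psi^{n-1}\,ds$, which is a touch more elementary and yields the same equality characterization (equality forces $\psi'$ constant on $[0,r]$, i.e.\ $\psi''\equiv 0$ there). Both arguments are equivalent manifestations of the convexity hypothesis, so this is a cosmetic rather than a substantive distinction.
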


\begin{proof}
By direct computations, we have: 
$$
\begin{aligned}
P_u'(r) = & \, \psi^{n-1}(r) \, F_u(r) + \left( \int_0^r \psi^{n-1} \, ds \right) F_u'(r) + \frac{u(r)}{q+1} \left( \psi^{n-1} \left|u'\right|^{p-2} u'\right)'(r) + \frac{\psi^{n-1}(r)}{q+1} \left|u'(r)\right|^p \\
 = & \, \psi^{n-1}(r) \left(\frac{p-1}{p} + \frac{1}{q+1}\right) \left|u'(r)\right|^p + \frac{u(r)}{q+1} \left[ \psi^{n-1}(r) \left|u(r)\right|^{q-1} u(r) + \left( \psi^{n-1} \left|u'\right|^{p-2} u'\right)'(r) \right] \\
 & \, - \left( \int_0^r \psi^{n-1} \, ds\right) (n-1) \, \frac{\psi'(r)}{\psi(r)} \left|u'(r)\right|^p \\
= & \underbrace{\left[ \left(\frac{p-1}{p} + \frac{1}{q+1}\right) \psi^{n-1}(r) - \left( \int_0^r \psi^{n-1} \, ds \right) (n-1) \, \frac{\psi'(r)}{\psi(r)} \right]}_{=:K(r)} \left|u'(r)\right|^p ,
\end{aligned}
$$
where we have used the differential equation in \eqref{radial pb} and \eqref{Fprimo}. Integration by parts yields 
\[
\int_0^r \psi^{n-1} \, ds = \frac1{n} \int_0^r \frac{n \, \psi^{n-1} \psi'}{\psi'} \, ds = \frac{\psi^n(r)}{n \, \psi'(r)} + \frac 1 n \int_0^r \frac{\psi^n \, \psi''}{\left(\psi'\right)^2} \, ds \, .
\]
Therefore we can rewrite $K$ as
$$
K(r) =  \left(\frac{p-1}{p} + \frac{1}{q+1}- \frac{n-1}{n}\right) \psi^{n-1}(r) - \frac{n-1}{n} \, \frac{\psi'(r)}{\psi(r)} \int_0^r \frac{\psi^n \, \psi''}{\left(\psi'\right)^2} \, ds \, ,
$$
where the coefficient of $\psi^{n-1}(r)$ is smaller than or equal to $0$ since $q +1\ge p^*$ (with equality if and only if $q+1=p^*$), the second term is also nonpositive (recall that $\psi'(0) = 1$ and $\psi$ is convex) and can vanish at some $r>0$ if only if $\psi''(s) =0$ for every $s \in (0,r)$. Finally, the fact that $P_u \le 0$ in $I$ follows from $P_u(0) = 0$ along with the monotonicity of $P_u$. 
\end{proof}

A first relevant consequence of Lemma \ref{lem: on P} is that any solution to \eqref{radial pb} is global and remains positive in the whole $[0,+\infty)$.

\begin{lemma}\label{exist-glob}
If $u$ is a solution to \eqref{radial pb} defined in its maximal existence interval $I$, then $ I=[0,+\infty) $ with $ u>0 $ in $ [0,+\infty) $ and $ u'<0 $ in $(0,+\infty)$.
\end{lemma}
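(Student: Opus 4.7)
The plan is to rule out by contradiction the only scenario left open by the dichotomy recalled just before the statement, namely the existence of $R>0$ such that $u>0$ and $u'<0$ on $(0,R)$ while $u(R)=0$ and $u'(R)\le 0$. The whole argument is driven by the Pohozaev function $P_u$ of Lemma~\ref{lem: on P}.

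First I would evaluate $P_u$ at $r=R$. Since $u(R)=0$, the second summand in the definition of $P_u$ drops out and $F_u(R)$ collapses to $\frac{p-1}{p}|u'(R)|^p$, so
\[
P_u(R) \;=\; \frac{p-1}{p}\left(\int_0^R \psi^{n-1}\,ds\right) \lvert u'(R)\rvert^{p} \;\ge\; 0.
\]
On the other hand, Lemma~\ref{lem: on P} gives $P_u\le 0$ on the whole existence interval. Therefore $P_u(R)=0$, and since $\int_0^R\psi^{n-1}\,ds>0$ this forces $u'(R)=0$.

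Next I would exploit the monotonicity of $P_u$. As $P_u(0)=0=P_u(R)$ and $P_u$ is nonincreasing (again by Lemma~\ref{lem: on P}), one necessarily has $P_u\equiv 0$ on $[0,R]$, whence $P_u'=K\lvert u'\rvert^p\equiv 0$ there. Since $u'<0$ on $(0,R)$, this in turn forces $K\equiv 0$ on $(0,R)$. The rigidity part of Lemma~\ref{lem: on P} then says that this can happen only when $q=p^*-1$ and $\psi''\equiv 0$ on $(0,R)$; combined with the normalizations $\psi(0)=0$ and $\psi'(0)=1$, the latter yields $\psi(s)=s$ on $[0,R]$.

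The last step, which I expect to be the main delicate point, is to extract a genuine contradiction from this ``Euclidean'' reduction, since at criticality the Pohozaev monotonicity alone has become trivial. With $\psi(s)=s$ and $q=p^*-1$, on $[0,R]$ the initial value problem in \eqref{radial pb} coincides with the Euclidean radial critical $p$-Laplace Cauchy problem with data $u(0)=\alpha$, $u'(0)=0$. Where $u>0$ the equation is nondegenerate, so standard ODE uniqueness applies and forces $u$ to coincide on $[0,R]$ with the explicit Aubin--Talenti profile $v(r)=a\bigl(b+r^{p/(p-1)}\bigr)^{-(n-p)/p}$, with $a,b>0$ chosen so that $v(0)=\alpha$. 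But $v$ is strictly positive for every $r\ge 0$, hence $u(R)=v(R)>0$, contradicting $u(R)=0$. This rules out alternative (b) and, together with Lemma~\ref{lem: glob ex}, yields the global existence and sign properties claimed.
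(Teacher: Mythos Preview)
Your proof is correct but takes a considerably longer route than the paper's. Both arguments use the Pohozaev inequality $P_u\le 0$ to exclude the case $u'(R)<0$ (which would give $P_u(R)>0$). The divergence is in how $u'(R)=0$ is ruled out. You push the rigidity clause of Lemma~\ref{lem: on P} all the way down to the Euclidean critical case $\psi(s)=s$, $q=p^*-1$, and then invoke uniqueness of the Cauchy problem together with the explicit Aubin--Talenti profile to contradict $u(R)=0$. The paper instead dispatches $u'(R)=0$ in one line, \emph{before} any Pohozaev rigidity: integrating $\bigl(\psi^{n-1}|u'|^{p-2}u'\bigr)'=-\psi^{n-1}u^q$ over $(0,R)$ gives
\[
\psi^{n-1}(R)\,|u'(R)|^{p-2}u'(R) \;=\; -\int_0^R \psi^{n-1}u^q\,ds \;<\; 0,
\]
since $u>0$ on $(0,R)$; this forces $u'(R)<0$ directly. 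Note that this integration argument works verbatim in the Euclidean critical case too, so the detour through the Aubin--Talenti profile and the ODE uniqueness it relies on is unnecessary. A small side remark: your phrase ``where $u>0$ the equation is nondegenerate'' is misplaced---the degeneracy of the radial $p$-Laplacian sits at points where $u'=0$, not where $u=0$---but the uniqueness you actually need is the one for the Cauchy problem at the origin, already quoted in the paper from \cite{FLS96,GuVe88}, so the conclusion stands.
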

\begin{proof}
As observed above, either the claim is true or there exists $ R>0 $ such that $ u>0 $ and $ u'<0 $ in $ (0,R) $, $ u(R)=0 $ and $u'(R) \le 0$. The possibility that $u'(R) = 0$ can be immediately ruled out, by integrating the equation on $(0,R)$; while the fact that $u'(R) <0$ gives a contradiction with Lemma \ref{lem: on P}, since it would imply $ P_u(R)>0 $.
\end{proof}

We complete this subsection with two further useful properties of radial solutions. From here on we will take for granted that $ u $ is positive and globally defined in $ [0,+\infty) $. 

\begin{lemma}\label{lem: monot and lim}
If $u$ is a solution to \eqref{radial pb}, then $u(r) \to \lambda \in [0,u(0))$ and $u'(r) \to 0$ as $r \to +\infty$.
\end{lemma}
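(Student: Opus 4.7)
The plan is to combine the monotonicity of $u$ established in Lemma \ref{exist-glob} with the monotonicity of the energy $F_u$ coming from the computation \eqref{Fprimo}. Since $u>0$ on $[0,+\infty)$ and $u'<0$ on $(0,+\infty)$, the solution $u$ is strictly decreasing and bounded below by $0$; hence it must converge to some $\lambda \in [0, u(0))$ as $r \to +\infty$. This immediately yields the first assertion.

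For the second assertion, I would invoke \eqref{Fprimo}, which asserts that $F_u'(r) \le 0$ on $[0,+\infty)$. Thus $F_u$ is nonincreasing; being also bounded below by $0$, it admits a finite limit $L \ge 0$ at infinity. Since $u(r) \to \lambda$, the term $u(r)^{q+1}/(q+1)$ in the definition of $F_u$ converges to $\lambda^{q+1}/(q+1)$, and therefore $(p-1) \, |u'(r)|^p/p$ must converge to $L - \lambda^{q+1}/(q+1)$, which we can rewrite as $(p-1)m^p/p$ for some $m \ge 0$. Taking into account that $u'<0$, this entails $u'(r) \to -m$ as $r \to +\infty$.

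The final step consists in ruling out $m>0$: indeed, if this were the case, there would exist $r_0>0$ such that $u'(r) \le -m/2$ for every $r \ge r_0$, so by integration $u(r) \to -\infty$, in contradiction with $u>0$. Hence $m=0$, which is exactly the desired conclusion. The whole strategy relies on monotonicity properties already at our disposal (of $u$ and of $F_u$), so I do not anticipate any serious obstacle; the only mild subtlety is identifying the correct decreasing quantity, which in this paper has already been isolated as $F_u$.
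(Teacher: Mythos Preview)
Your proposal is correct and follows essentially the same approach as the paper: both arguments use the monotonicity of $u$ from Lemma \ref{exist-glob} for the first claim, and the monotonicity of $F_u$ from \eqref{Fprimo} (together with $F_u \ge 0$) to show that $|u'(r)|$ has a limit, which is then seen to be $0$ via the convergence of $u$. The only cosmetic difference is that the paper first records $\liminf_{r\to+\infty}|u'(r)|=0$ and then proves the limit exists, whereas you establish existence of the limit first and then rule out a nonzero value by integrating $u'$.
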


\begin{proof}
The monotonicity and positivity of $u$ ensure that $u(r) \to \lambda \in [0,u(0))$ as $r \to +\infty$, whence
\beq\label{liminf 0}
\liminf_{r \to +\infty} \left|u'(r)\right| = 0 \,  .
\eeq
We are left with proving that actually $u'(r) \to 0$ as $r \to +\infty$. To this end, we integrate both sides in \eqref{Fprimo} on an interval $(r_0,r)$, with $0<r_0<r$, deducing that
\[
F_u(r) = F_u(r_0) - (n-1) \int_{r_0}^r \frac{\psi'}{\psi} \left|u'\right|^p ds \quad \Rightarrow \quad \frac{p-1}{p} \left|u'(r)\right|^p = F_u(r_0) - \frac{u^{q+1}(r)}{q+1}  - (n-1) \int_{r_0}^r \frac{\psi'}{\psi} \left|u'\right|^p ds .
\]
Clearly the right-hand side in the last identity has a limit as $r \to +\infty$, and so does the left-hand side. This means that $|u'(r)|$ itself has a limit as $r \to +\infty$, which thanks to \eqref{liminf 0} completes the proof.
\end{proof}

\begin{lemma}\label{lem: P strict neg}
Let $\Mb^n \not \equiv \R^n$ and $u$ be a solution to \eqref{radial pb}. Then there exists $\overline{r}>0$ such that
\[
P_u(r) <0 \qquad \forall r > \overline{r} \, .
\]
\end{lemma}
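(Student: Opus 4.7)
The plan is to combine the two differential identities already at our disposal, namely $P_u(0)=0$ and $P_u'(r)=K(r)\,|u'(r)|^p\le 0$ from Lemma \ref{lem: on P}, with the fact that $u'(r)<0$ for every $r>0$ (Lemma \ref{exist-glob}). The strategy is to upgrade ``$K\le 0$'' to ``$K<0$ strictly'' past some threshold $\overline r_1$, so that $P_u$ becomes strictly decreasing on $(\overline r_1,+\infty)$ and, together with $P_u(\overline r_1)\le 0$, forces $P_u(r)<0$ for every $r>\overline r_1$. The argument then splits naturally according to whether $q>p^*-1$ or $q=p^*-1$.

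First I would dispose of the supercritical case $q>p^*-1$. In the rewritten expression
\[
K(r)=\left(\frac{p-1}{p}+\frac{1}{q+1}-\frac{n-1}{n}\right)\psi^{n-1}(r)-\frac{n-1}{n}\,\frac{\psi'(r)}{\psi(r)}\int_0^r\frac{\psi^n\,\psi''}{(\psi')^2}\,ds
\]
derived inside the proof of Lemma \ref{lem: on P}, the first coefficient is strictly negative (it vanishes exactly when $q+1=p^*$), while the integral term is nonnegative by convexity of $\psi$. Hence $K(r)<0$ for every $r>0$, and, combining with $u'(r)<0$ on $(0,+\infty)$, integration of $P_u'=K\,|u'|^p$ from $P_u(0)=0$ yields $P_u(r)<0$ for all $r>0$. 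In this regime any positive $\overline r$ works.

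Next I would handle the critical case $q=p^*-1$, where the coefficient of $\psi^{n-1}(r)$ in $K$ is zero, so only the integral term survives. This is the point at which the hypothesis $\Mb^n\not\equiv\R^n$ enters: since $\psi$ is smooth, convex and satisfies $\psi(0)=0$, $\psi'(0)=1$, the identity $\psi''\equiv 0$ on $[0,+\infty)$ would force $\psi(r)\equiv r$, i.e., $\Mb^n\equiv\R^n$. Hence there must exist $r_0>0$ with $\psi''(r_0)>0$, and by continuity $\psi''>0$ on a neighborhood of $r_0$. Consequently $\int_0^r \psi^n\psi''/(\psi')^2\,ds$ is strictly positive for every $r$ larger than some $\overline r_1>r_0$, and so $K(r)<0$ strictly on $(\overline r_1,+\infty)$. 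The monotonicity argument sketched above then yields $P_u(r)<P_u(\overline r_1)\le 0$ for any $r>\overline r_1$, proving the claim with any $\overline r>\overline r_1$.

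No step appears genuinely hard: the only place that requires a small amount of geometric input is converting the qualitative assumption ``$\Mb^n\not\equiv\R^n$'' into the analytic statement that, in the critical case, the integral term in $K$ eventually activates. Once this is established, the strict version of the same integration argument that already produced $P_u\le 0$ in Lemma \ref{lem: on P} immediately gives the desired strict inequality $P_u<0$ past the threshold.
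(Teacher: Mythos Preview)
Your proof is correct and takes essentially the same approach as the paper's. The only difference is organizational: instead of splitting into the cases $q>p^*-1$ and $q=p^*-1$, the paper handles both at once by directly invoking the characterization from Lemma~\ref{lem: on P} (namely, $K(r)=0$ only if $q=p^*-1$ \emph{and} $\psi''\equiv 0$ on $(0,r)$), so that once $\psi''>0$ on some interval $(r_1,r_2)$---which is exactly the meaning of $\Mb^n\not\equiv\R^n$---one has $K(r)<0$ for all $r>r_2$ regardless of $q$.
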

\begin{proof}
In terms of $\psi$, the assumption $\Mb^n \not \equiv \R^n$ is equivalent to $\psi''(r)>0$, at least for every $r$ ranging in an open interval $(r_1,r_2)$. Therefore, by Lemmas \ref{lem: on P} and \ref{exist-glob} we have that $P_u'(r) = K(r) \left|u'(r)\right|^p < 0$ for all $r>r_2$, whence the thesis follows.
\end{proof}

It is convenient to sum up what we have proved so far. We have shown that, on any Cartan-Hadamard model manifold, there exist (unique) solutions to the radial problems \eqref{radial pb} which are globally defined, remain positive and decrease, with $u(r) \to \ell \in [0,u(0))$ and $u'(r) \to 0$ as $r \to +\infty$. Moreover, the Pohozaev function $P_u$ is nonincreasing, nonpositive and, if $\Mb^n \not \equiv \R^n$, strictly negative for large $r$. To proceed further, we now distinguish between the $p$-stochastically complete and incomplete cases.

\subsection{Proof of the main results for \texorpdfstring{$\boldsymbol p$}{}-stochasically \emph{complete} manifolds.}

Throughout this whole subsection we assume that the Cartan-Hadamard manifold at hand is $p$-stochastically complete, namely \eqref{p-sc} holds. 

\begin{proof}[Proof of Theorem \ref{thm: sc vs si}-($i$)]
By what we have established in Subsection \ref{subs:1}, we can assert that for all $\alpha>0 $ there exists a unique solution to \eqref{radial pb}, which is in fact classical and complies with \eqref{LE}; vice versa, any radial solution to \eqref{LE} satisfies \eqref{radial pb} for some $ \alpha>0 $. Because different values of $ \alpha $ give rise to different solutions, there are infinitely-many such solutions.

Let now $u$ be any solution to \eqref{radial pb}. Since $P_u(r) \le 0$ for all $r>0$, in particular we have that
\[
\left(\int_0^r \psi^{n-1} \, ds \right) u^{q+1}(r) - \psi^{n-1}(r) \left[-u'(r)\right]^{p-1} u(r) \le 0 \qquad \forall r > 0
\]
(recall that $u>0$ and $u' < 0$). From the above inequality, straightforward computations give
\[
\frac{\left[-u'(r)\right]^{p-1}}{u^q(r)} \ge \psi^{1-n}(r) \int_0^r \psi^{n-1} \, ds \quad \implies \quad -\frac{u'(r)}{u^{\frac{q}{p-1}}(r)} \ge \Theta^{\frac{1}{p-1}}(r) \qquad \forall r > 0 \, .
\]
By integrating both sides on $(0,r)$, we deduce that
\[
\frac{p-1}{q+1-p} \left( \frac{1}{u^{\frac{q+1-p}{p-1}}(r)} - \frac{1}{\alpha^{\frac{q+1-p}{p-1}}  } \right) \ge \int_0^r \Theta^\frac{1}{p-1}\,ds \to +\infty \qquad \text{as } r \to +\infty \, ,
\]
thanks to assumption \eqref{p-sc}. Therefore $u(r) \to 0$ as $r \to +\infty$, and more precisely
\beq\label{24021}
u(r) \le \left( \frac{p-1}{q+p-1} \right)^{\frac{p-1}{q+p-1}} \left(\int_0^r \Theta^\frac{1}{p-1} \, ds \right)^{-\frac{p-1}{q+1-p}} \qquad \forall r>0 \, .  \qedhere
\eeq
\end{proof}

\begin{remark}
In order to prove Theorem \ref{thm: sc vs si}-($i$), we could have also argued in the following way: since we know that any radial (positive) global solution is decreasing, it is enough to show that $\inf_{\Mb^n} u = 0$. To this end, the $p$-stochastic completeness of $\Mb^n$ allows us to apply the weak maximum principle at infinity in \cite[Theorem 1.2]{MarVal} to $-u$, whence the fact that $u(r) \to 0$ as $r \to +\infty$ follows. However, in the sequel we will need the decay estimate \eqref{24021}.
\end{remark}

Having established that solutions vanish at infinity, we are ready to prove Theorem \ref{thm: rig} in the $p$-stochastically complete case. 

\begin{proof}[Proof of Theorem \ref{thm: rig} under \eqref{p-sc}]
Suppose by contradiction that there exists a radial solution $u$ to \eqref{LE}, satisfying \eqref{hp grad}, on a Cartan-Hadamard model manifold $\Mb^n \not \equiv \R^n$ complying with \eqref{p-sc}. In particular, we know that $ u $ is a (classical) solution to \eqref{radial pb} for some $\alpha>0$. Hence, by virtue of Lemmas \ref{lem: glob ex}--\ref{lem: P strict neg} and Theorem \ref{thm: sc vs si}-($i$), we can assert that $u(r) \to 0$ as $r \to +\infty$ and $P_u(r) \le P_u(\overline r) =: - C<0$ for all $r> \overline r$, for some $\overline r>0$. By the definition of $P_u$, this yields
\[
\frac{\psi^{n-1}(r)}{q+1} \left|u'(r)\right|^{p-2} u'(r) \, u(r) \le - C \qquad  \forall r > \overline{r} \, .
\]
That is, since $u'<0$ and $u>0$, 
\[
\psi^{n-1}(r) \left|u'(r)\right|^p \ge - C \, \frac{u'(r)}{u(r)} \qquad \forall r > \overline{r} \, .
\]
Upon integrating on $(\bar r, r)$, we deduce that
\[
\int_{\overline r}^r \left|u'\right|^p \psi^{n-1} \, ds \ge C \log ( u(\overline{r})) - C \log (u(r)) \to +\infty \qquad \text{as } r \to +\infty \, ;
\]
on the other hand $\int_{\overline r}^{+\infty} \left|u'\right|^p \psi^{n-1} \, ds $ must be finite by \eqref{hp grad}, which leads to the desired contradiction.

\end{proof}

We now address the proof of Theorem \ref{thm: decay}-$(i)$. To this end, recall that we require the additional assumptions \eqref{hp add 1} and \eqref{hp add 2}. We will prove the result through a series of lemmas, following a similar strategy to the one developed in \cite{BFG}, where the case $p=2$ is treated. Let us start with some preliminary observations.

\begin{remark}\label{rem: hp add}
Assumption \eqref{hp add 1} implies in particular that there exist two constants $c_1, c_2 >0$ such that, for all $\delta>0$, one can pick $r_\delta>0$ so large that
$$
c_{1} (1-\delta) \, e^{c_2(1-\delta) \, r^{1-\gamma} }\le \psi(r) \le c_{1}(1+\delta) \, e^{c_2(1+\delta) \, r^{1-\gamma}} \qquad \forall r>r_\delta \, . 
$$
On the other hand, assumption \eqref{hp add 2} ensures that for all $M>0$ there exist $ C_M,r_M>0 $ such that 
\[
\psi(r) \ge C_M \, e^{M r} \qquad \forall r > r_M \, . 
\]
In both cases $\psi$ has at least an exponential-like growth at infinity. We also notice that, under either \eqref{hp add 1} or \eqref{hp add 2}, it holds
\beq \label{asymp1}
\frac{\psi^{n-1}(r)}{\int_0^r \psi^{n-1} \, ds } \sim (n-1) \, \frac{\psi'(r)}{\psi(r)} \qquad \text{as } r \to +\infty \, ,
\eeq
where by the symbol $ \sim $ we mean that the ratio tends to $1$. Indeed, in case \eqref{hp add 1} is satisfied, then by L'H\^opital's rule we have
\beq \label{asymp-theta}
\frac{r^\gamma \, \psi^{n-1}(r)}{\int_0^r \psi^{n-1} \, ds} \sim (n-1) \, \frac{r^\gamma \, \psi'(r)}{\psi(r)} + \gamma \, r^{\gamma-1} \sim (n-1) \, \frac{r^\gamma \, \psi'(r)}{\psi(r)} \to (n-1) \, \ell \qquad \text{as } r \to +\infty \, ;
\eeq
whereas in case \eqref{hp add 2} is satisfied, still L'H\^opital's rule yields 
$$
\frac{\frac{\psi(r)}{\psi'(r)} \, \psi^{n-1}(r)}{\int_0^r \psi^{n-1} \, ds} \sim   \frac{\left( \frac{\psi(r)}{\psi'(r)} \right)' \psi^{n-1}(r) + (n-1)\,\psi^{n-1}(r)}{\psi^{n-1}(r)} \to n-1 \qquad \text{as } r \to +\infty \, .
$$
\end{remark}

\begin{lemma}\label{lem: no very fast decay}
Let $u$ be a radial solution to \eqref{LE}. Suppose that \eqref{p-sc} and either \eqref{hp add 1} or \eqref{hp add 2} hold. Then there exist no positive constants $C, \beta>0$ such that $u(r) \le C \, \psi^{-\beta}(r)$ for all $r>0$.
\end{lemma}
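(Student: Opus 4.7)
The proof goes by contradiction. As a preliminary observation, both \eqref{hp add 1} and \eqref{hp add 2} exclude $\Mb^n \equiv \R^n$ (the former forces $r^\gamma \psi'/\psi \to \ell \in (0,+\infty)$ with $\gamma < 1$, the latter $\psi'/\psi \to +\infty$, neither compatible with $\psi(r)=r$). Therefore Lemma \ref{lem: P strict neg} applies and provides $\overline{r}, C_0 > 0$ such that $P_u(r) \le - C_0$ for all $r \ge \overline{r}$. Suppose then that $u(r) \le C \, \psi^{-\beta}(r)$ for all $r > 0$ and some $C, \beta > 0$; by \eqref{p-sc} and Theorem \ref{thm: sc vs si}-$(i)$ we already know that $u(r) \to 0$ as $r \to +\infty$.

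The core of the proof is a bootstrap step, showing that the decay exponent can in fact be pushed up to $(n-1)/(p-1) - \eps$ for every $\eps > 0$. Integrating \eqref{rad eq} from $0$ to $r$, and using $u' < 0$ together with $u'(0) = 0$, produces the first integral
\begin{equation*}
\psi^{n-1}(r) \left|u'(r)\right|^{p-1} = \int_0^r \psi^{n-1} u^q \, ds \le C^q \int_0^r \psi^{n-1 - \beta q} \, ds \, .
\end{equation*}
By Remark \ref{rem: hp add}, $\psi$ grows at least exponentially; a L'H\^opital computation based on \eqref{asymp1} then yields $\int_0^r \psi^{n-1-\beta q}\, ds \le C \, r^{\delta_1} \, \psi^{(n-1-\beta q)_+}(r)$ for some $\delta_1 \ge 0$, where $(x)_+ := \max\{x,0\}$. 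Hence
\begin{equation*}
\left|u'(r)\right| \le C' \, r^{\delta_2} \, \psi^{ - \min\{\beta q,\, n-1\} /(p-1)}(r) \, ,
\end{equation*}
and integrating this bound from $r$ to $+\infty$ (legitimate since $u \to 0$), while absorbing the polynomial factor $r^{\delta_2}$ into $\psi^\eps$ (possible because $\psi$ grows faster than any polynomial), we conclude
\begin{equation*}
u(r) \le C_\eps \, \psi^{\eps - \min\{\beta q,\, n-1\}/(p-1)}(r) \qquad \forall \eps > 0 \, .
\end{equation*}
Since $q \ge p^\ast-1 > p-1$ the ratio $q/(p-1)$ is strictly greater than $1$: as long as the ``cap'' $(n-1)/(p-1)$ has not been reached, each iteration multiplies the exponent by $q/(p-1)$ (modulo an $\eps$ loss), while once that cap is reached the new exponent equals $(n-1)/(p-1)-\eps$. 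After finitely many iterations we thus obtain $u(r) \le C_\eps \psi^{\eps - (n-1)/(p-1)}(r)$ for $r$ large.

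We now pick $M > (n-1)/q$, which is possible since $(n-1)/(p-1) > (n-1)/q$ (again by $q > p-1$). The bootstrap then yields $u(r) \le C_M \psi^{-M}(r)$ for $r$ large; since $Mq > n-1$ and $\psi$ grows exponentially, $\int_0^{+\infty} \psi^{n-1-Mq}\, ds <+\infty$, so $\psi^{n-1}(r) |u'(r)|^{p-1}$ is uniformly bounded. In particular the ``boundary'' summand of $P_u$ satisfies
\begin{equation*}
\left|\frac{\psi^{n-1}(r)}{q+1}\left|u'(r)\right|^{p-2} u'(r) \, u(r)\right| \le C \, u(r) \longrightarrow 0 \qquad \text{as } r \to +\infty \, .
\end{equation*}
As for the ``bulk'' summand, $|u'(r)|^p \le C\psi^{-p(n-1)/(p-1)}(r)$ and $u^{q+1}(r) \le C_M^{q+1}\psi^{-M(q+1)}(r)$ both decay exponentially in $r$, while by \eqref{asymp1} the factor $\int_0^r \psi^{n-1}\, ds$ is of order at most $\psi^{n-1}(r)$ times a polynomial correction. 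Since exponential decay dominates polynomial growth, $\left(\int_0^r \psi^{n-1}\, ds\right) F_u(r) \to 0$ as well. Combining, $P_u(r) \to 0$, contradicting $P_u(r) \le -C_0 < 0$.

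The main difficulty is the bootstrap itself: the polynomial-in-$r$ factors produced by each L'H\^opital estimate (and by $\Theta(r) \sim r^\gamma/[(n-1)\ell]$ in the \eqref{hp add 1} case) must be reabsorbed into $\psi^\eps$, which works precisely because both \eqref{hp add 1} and \eqref{hp add 2} force $\psi$ to grow faster than any polynomial; without such a fast growth (cf. Proposition \ref{prop: oscillazioni}) the conclusion would fail.
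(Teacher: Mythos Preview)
Your argument is correct, and it shares with the paper's proof the same bootstrap step: integrating the equation, exploiting \eqref{asymp1} and the at-least-exponential growth of $\psi$ (Remark~\ref{rem: hp add}) to absorb polynomial factors, and iterating to push the decay exponent up to $(n-1)/(p-1)-\eps$. The difference lies entirely in how the contradiction is reached. You first observe that the hypotheses force $\Mb^n \not\equiv \R^n$, invoke Lemma~\ref{lem: P strict neg} to obtain $P_u(r)\le -C_0<0$ for large $r$, and then use the improved decay to show that \emph{every} summand of $P_u(r)$ tends to $0$, contradicting strict negativity. The paper instead uses only the weak sign $P_u\le 0$: discarding the $u^{q+1}$ part of $F_u$ in the Pohozaev identity yields the differential inequality
\[
\frac{u'(r)}{u(r)} \ge -\frac{p}{(p-1)(q+1)}\,\frac{\psi^{n-1}(r)}{\int_0^r \psi^{n-1}\,ds} \sim -\frac{p(n-1)}{(p-1)(q+1)}\,\frac{\psi'(r)}{\psi(r)}\,,
\]
whose integration gives a \emph{lower} bound $u(r)\ge C'_\eps\,\psi^{-p(n-1+\eps)/[(p-1)(q+1)]}(r)$; since $q+1\ge p^\ast>p$, this exponent is strictly smaller than the upper-bound exponent $(n-1-\eps)/(p-1+\eps)$ for $\eps$ small, and the two are incompatible. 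Your route is conceptually neat (the Pohozaev functional itself is forced to $0$), at the price of invoking Lemma~\ref{lem: P strict neg}; the paper's route avoids that lemma but requires tracking a separate lower-bound exponent and a short algebraic comparison.
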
 
\begin{proof}
Assume by contradiction that there exist two constants $C, \beta>0$ as in the statement. For the sake of readability, along the proof $C$ will stand for a general constant, which may actually change from line to line but will not be relabeled. Since $ u $ is bounded and $ \psi(r) \to +\infty $ as $ r \to +\infty $, it is not restrictive to assume further that $\beta<(n-1)/q$. By integrating \eqref{radial pb} on $(0,r)$, we thus find that
\begin{equation*}\label{23021}
-u'(r) =  \left( \psi^{1-n}(r) \int_0^r \psi^{n-1} \, u^q \, ds \right)^\frac{1}{p-1} \le C \left( \psi^{1-n}(r) \int_0^r \psi^{n-1-\beta q} \, ds \right)^\frac{1}{p-1} \qquad \forall r>0 \, ;
\end{equation*}
a subsequent integration on $(r, +\infty)$ yields
\beq\label{20110}
u(r) \le C \, \int_r^{+\infty} \left( \psi^{1-n}(s) \int_0^s \psi^{n-1-\beta q} \, dt \right)^\frac{1}{p-1} ds \qquad \forall r>0 \, ,
\end{equation}
where we used the fact that $u(r) \to 0 $ as $r \to +\infty$ (Theorem \ref{thm: sc vs si}-($i$)). Note that the integral on the right-hand side of \eqref{20110} vanishes as $r \to +\infty$: this can be seen, for instance, upon bounding the innermost integral by $ s \, \psi^{n-1-\beta q}(s) $ and exploiting the exponential-like growth of $ \psi $, as observed in Remark \ref{rem: hp add}. 

We claim that, for every $ \eps \in (0,q+1-p) $, there exists $ C_\eps>0 $ (which neither will be relabeled from line to line) such that
\beq\label{20111}
\frac{1}{\psi^{-\frac{\beta q}{p-1+\eps}}(r)} \int_r^{+\infty} \left( \psi^{1-n}(s) \int_0^s \psi^{n-1-\beta q} \, dt \right)^\frac{1}{p-1} ds \le C_\eps  \qquad \text{for $r$ large enough} \, .
\eeq
In order to prove \eqref{20111}, it suffices to apply L'H\^opital's rule to the ratio on the left-hand side and take advantage of \eqref{asymp1} (which actually holds for any real $n>1$) along with the just recalled exponential-like growth of $ \psi $, to deduce that in fact such ratio vanishes at infinity. In view of \eqref{20110}, estimate \eqref{20111} then entails $u(r) \le C_\eps \, \psi^{-{\beta q}/{(p-1+\eps)}}(r)$, which is stronger than the initial bound since $ q /(p-1) > 1$. We can therefore iterate the previous argument a finite number of times, inferring that
$$
u(r) \le C_\eps \, \psi^{-\beta \left(\frac{q}{p-1+\eps}\right)^k}(r) \quad \forall r>0 \, , \quad \text{for every $k$ satisfying} \quad  \beta \left(\frac{q}{p-1+\eps}\right)^{k-1}<\frac{n-1}{q} \, .
$$ 
In particular, because $ \beta $ can be taken as small as needed,
we can assert that
\beq\label{20112}
u(r) \le C_\eps \, \psi^{- \frac{n-1-\eps}{p-1+\eps}}(r) \qquad \forall r>0 \, .
\eeq
We will now reach a contradiction by obtaining an incompatible estimate in the opposite direction. To this end, recall that $P_u(r) \le 0 $ in view of Lemma \ref{lem: on P}. As a result,
\[
\left( \int_0^r \psi^{n-1} \, ds \right) \frac{p-1}{p} \left|u'(r)\right|^p  + \frac{\psi^{n-1}(r)}{q+1} \left|u'(r)\right|^{p-2} u(r) \, u'(r) \le  0 \qquad \forall r > 0 \, .
\]
Since $u'<0$ and $u>0$, we deduce that there exists $r_\eps>0$ such that
\beq\label{20113}
\frac{u'(r)}{u(r)} \ge - \frac{p}{(p-1)(q+1)} \, \frac{\psi^{n-1}(r)}{\int_0^r \psi^{n-1} \, ds} >  - \frac{p (n-1+\eps)}{(p-1)(q+1)}  \, \frac{\psi'(r)}{\psi(r)} \qquad \forall r > r_\varepsilon \, ,
\eeq
the last inequality following from \eqref{asymp1}. By integrating \eqref{20113} on $(r_\eps, r)$, we conclude that 
$$
u(r) \ge C_\eps' \, \psi^{-\frac{p(n-1+\eps)}{(p-1)(q+1)}}(r) \qquad \forall r > r_\varepsilon \, ,
$$
where $C_\eps'$ is another suitable positive constant. A comparison with \eqref{20112} gives the desired contradiction, since $\eps>0$ can be chosen so small that $p(n-1+\eps)(p-1+\eps) < (n-1-\eps)(p-1)(q+1)$.
\end{proof}

\begin{lemma}\label{lem: decay 2}
Let $u$ be a radial solution to \eqref{LE}. Suppose that \eqref{p-sc} and either \eqref{hp add 1} or \eqref{hp add 2} hold. Then
\beq	\label{fund-1}
\lim_{r \to +\infty} \frac{u'(r)}{u(r)} \frac{\psi(r)}{\psi'(r)} = 0 \, .
\eeq
\end{lemma}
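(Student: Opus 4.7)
The plan is to study the evolution of $\widetilde V(r) := V(r)\, \psi(r)/\psi'(r)$, with $V := -u'/u$, combining the Pohozaev bound of Lemma \ref{lem: on P} with the quantitative growth of $\psi$ encoded in \eqref{hp add 1}--\eqref{hp add 2}. As a preliminary step, I would rewrite $P_u \le 0$ (dividing by $\psi^{n-1} u^{q+1}$) in the form
\[
\frac{p-1}{p} \Theta V + \frac{\Theta u^{q+1-p}}{(q+1) V^{p-1}} \le \frac{1}{q+1} \, ,
\]
which simultaneously yields the universal bound $\Theta V \le p/[(p-1)(q+1)]$ (so that by \eqref{asymp1}, $\limsup \widetilde V \le (n-1)p/[(p-1)(q+1)]$, \emph{strictly less than} $(n-1)/(p-1)$ since $q+1 \ge p^\ast > p$) and the pointwise lower bound $V \ge \Theta^{1/(p-1)} u^{(q+1-p)/(p-1)}$. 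Separating variables in this last inequality and integrating delivers the sharp a priori estimate
\[
u(r) \le \left[\tfrac{p-1}{q+1-p}\right]^{(p-1)/(q+1-p)} \!\! \left(\int_0^r \Theta^{1/(p-1)} \, ds \right)^{-(p-1)/(q+1-p)} \!\! (1 + o(1)) \, ,
\]
from which, exploiting the precise growth rate of $\psi$ under \eqref{hp add 1} or \eqref{hp add 2}, one deduces the crucial fact $u^{q+1-p} = o(\phi^p)$ as $r \to +\infty$, with $\phi := \psi'/\psi$.

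Next, dividing \eqref{rad eq} by $u^{p-1}$ and using $u'/u = -V$, I would obtain the Bernoulli-type ODE
\[
(V^{p-1})' + \big[(n-1)\phi - (p-1) V\big] V^{p-1} = u^{q+1-p} \, ,
\]
and in turn, by a routine computation, the evolution equation for $\widetilde V$:
\[
\widetilde V' = \phi\, \widetilde V \!\left[\widetilde V - \tfrac{n-1}{p-1}\right] + \frac{u^{q+1-p}}{(p-1)\,\widetilde V^{p-2}\,\phi^{p-1}} - \widetilde V \, \frac{\phi'}{\phi} \, .
\]
By Step 1 the bracketed factor is bounded above by some fixed $-\delta < 0$; under either \eqref{hp add 1} or \eqref{hp add 2} one easily verifies $\phi'/\phi = o(\phi)$; and thanks to $u^{q+1-p} = o(\phi^p)$, the source term is negligible compared with $\phi \widetilde V$ whenever $\widetilde V$ stays bounded away from zero. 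As a final preparatory ingredient, using Lemma \ref{lem: no very fast decay} and the identity $(\log (u \psi^\alpha))' = \phi\,(\alpha - \widetilde V)$, I would note that $u \psi^\alpha$ cannot be eventually monotone decreasing (being unbounded along some sequence for every $\alpha > 0$), and hence $\liminf_{r\to+\infty} \widetilde V = 0$.

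The conclusion would then follow by contradiction. Assuming $\limsup_{r\to+\infty} \widetilde V =: L > 0$, pick any $c \in (0, L)$; by the $\liminf$ information and continuity there are infinitely many intervals $[a_k, b_k]$, $a_k \to +\infty$, with $\widetilde V \ge c$ inside and $\widetilde V(a_k) = \widetilde V(b_k) = c$. On each such interval the previous estimates collapse the ODE into $(\log \widetilde V)' \le -(\delta/2)\, \phi$, whose integration between $a_k$ and $b_k$ produces the absurd chain $1 \le (\psi(a_k)/\psi(b_k))^{\delta/2} < 1$. The main technical obstacle I foresee is the quantitative absorption of the source term on the intervals $[a_k, b_k]$: this requires a careful case analysis according to whether \eqref{hp add 1} holds (with a further split on the range of $\gamma$) or \eqref{hp add 2} does, in order to compare $u^{q+1-p}$ and $\phi^p$ with the correct polynomial-vs-(super-)exponential rates of $\psi$.
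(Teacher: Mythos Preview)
Your approach is genuinely different from the paper's. The paper argues at \emph{extremal points}: assuming the limit fails, it selects a sequence $r_m$ of local minimizers of $\frac{u'}{u}\frac{\psi}{\psi'}$ (under \eqref{hp add 2}) or of $r^\gamma \frac{u'}{u}$ (under \eqref{hp add 1}), uses the extremality condition to eliminate $u''$, and then plugs the Pohozaev bound \eqref{20113} and the decay estimate \eqref{24021} into \eqref{rad eq} to force the value at $r_m$ to zero. You instead derive a differential inequality for $\widetilde V$ and integrate it over excursion intervals above level $c$; this is a cleaner, more unified dynamical-systems argument that does not require switching auxiliary quantity between the two hypotheses.

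There is, however, one real gap. The assertion ``under \eqref{hp add 1} one easily verifies $\phi'/\phi = o(\phi)$'' is unjustified and generally false: \eqref{hp add 1} constrains only $\phi=\psi'/\psi$, not $\phi'$, and one can build Cartan-Hadamard models with $r^\gamma \phi\to \ell$ while $(\psi/\psi')' = -\phi'/\phi^2$ oscillates without tending to $0$ (prescribe $g:=\psi/\psi'$ with $g\sim r^\gamma/\ell$ and $g'$ bounded but not vanishing; convexity of $\psi$ is just $g'\le 1$). The paper avoids this precisely by passing to $r^\gamma u'/u$ under \eqref{hp add 1}, whose derivative produces the harmless term $\gamma/r$ in place of $\phi'/\phi$. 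Your scheme can nonetheless be repaired without changing variable: convexity gives the one-sided bound $-\phi'/\phi^2 \le 1$, hence $-\widetilde V\,\phi'/\phi \le \phi\widetilde V$ always; and your $\delta$ can in fact be taken strictly larger than $1$, since
\[
\frac{n-1}{p-1}-\limsup_{r\to\infty}\widetilde V \;\ge\; \frac{n-1}{p-1}-\frac{(n-1)p}{(p-1)(q+1)}=\frac{(n-1)(q+1-p)}{(p-1)(q+1)}>1
\]
for $q+1\ge p^\ast$ and $p<n$. This yields $(\log\widetilde V)' \le -\tfrac{\delta-1}{2}\,\phi$ on the excursion intervals, and your contradiction goes through unchanged. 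The ``main technical obstacle'' you anticipated (absorbing the source term) is, by comparison, not where the difficulty lies.
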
 
\begin{proof}
We consider at first the case when \eqref{hp add 2} holds, and start by showing that
\[
\limsup_{r \to +\infty} \frac{u'(r)}{u(r)} \frac{\psi(r)}{\psi'(r)} = 0 \, .
\]
Indeed, if this $\limsup$ were smaller than $-\beta<0$, then it would be immediate to deduce that $u(r) \le C \, \psi^{-\beta}(r)$ for a suitable $C>0$, in contradiction with Lemma \ref{lem: no very fast decay}. Therefore, it remains to establish that the $ \limsup $ is in fact a limit. To this aim, we proceed again by contradiction. Should the limit not exist, there would be a sequence $r_m \to +\infty$ of local minimizers for the differentiable function $\frac{u'}{u} \frac{\psi}{\psi'}$, along which the latter does not tend to $0$. Let us set
\beq\label{20114}
\Gamma(r):= \left[ \log \left( \frac{\psi'(r)}{\psi(r)}\right)\right]'.
\eeq
It is easy to check, by extremality, that for all $ m \in \N $
\[
u''(r_m) \, u(r_m) = \left(u'(r_m)\right)^2 + u(r_m) \, u'(r_m) \, \Gamma(r_m)
\]
(note that, since $u'<0$ on $(0,+\infty)$, $u$ is actually $C^2((0,+\infty))$). If we multiply \eqref{rad eq} by $u$, this identity reads
\[
\left(-u'(r_m)\right)^{p-1} = \frac{u^{q+1}(r_m)}{(p-1) \, u'(r_m) + (p-1) \, u(r_m) \, \Gamma(r_m) + (n-1) \, \frac{\psi'(r_m)}{\psi(r_m)} \, u(r_m)} \, .
\]
Recall that for every $\eps>0$ small enough estimate \eqref{20113} holds at $ r = r_m $ for all $ m $ sufficiently large, whence
\[
\begin{aligned}
\left(-u'(r_m)\right)^{p-1} \le & \, \frac{u^q(r_m)}{-\frac{p}{q+1} \, (n-1+\eps) \, \frac{\psi'(r_m)}{\psi(r_m)}  + (p-1) \, \Gamma(r_m) + (n-1) \frac{\psi'(r_m)}{\psi(r_m)}} \\
 \le & \, \frac{u^q(r_m)}{C \, \frac{\psi'(r_m)}{\psi(r_m)} \left( 1+o(1) \right) } 
\end{aligned}
\]
as $ m \to \infty$, where we have used assumption \eqref{hp add 2} and the fact that $p<q+1$. Here and in the sequel $ C $ stands for a generic positive constant, whose explicit value is irrelevant to our purpose. As a consequence, 
\[
-\frac{u'(r_m)}{u(r_m)}\frac{\psi(r_m)}{\psi'(r_m)} \le C \, u^\frac{q+1-p}{p-1}(r_m) \left(\frac{\psi(r_m)}{\psi'(r_m)}\right)^{1+ \frac{1}{p-1}}
\]
for all $m$ large enough. Since $u(r) \to 0$ as $ r \to +\infty $, and \eqref{hp add 2} holds, this implies that the limit of the left-hand side is also $0$, in contradiction with the definition of the sequence $\{r_m\}$.

Let us turn to the case when \eqref{hp add 1} holds, and proceed similarly. Note that \eqref{fund-1} amounts to
$$
\lim_{r \to +\infty} \frac{r^\gamma  \,u'(r)}{u(r)}  = 0 \, .
$$
The fact that the $ \limsup $ is zero can be shown exactly as in the previous case. In order to establish that the above limit does exist, we argue again by contradiction, assuming that there is a sequence $r_m \to +\infty$ of local minimizers for the differentiable function $\frac{r^\gamma \, u'(r)}{u(r)}$, along which the latter does not tend to $0$. By extremality, for all $ m \in \N $ we have 
\[
u''(r_m) \, u(r_m) = \left(u'(r_m)\right)^2 - \frac{\gamma}{r_m} \, u(r_m) \, u'(r_m) \, .
\]
Hence, upon multiplying \eqref{rad eq} by $u$, this identity gives
\[
\left(-u'(r_m)\right)^{p-1} = \frac{u^{q+1}(r_m)}{(p-1) \, u'(r_m) - (p-1) \, \frac{\gamma}{r_m} \, u(r_m)  + (n-1) \, \frac{\psi'(r_m)}{\psi(r_m)} \, u(r_m) } \, .
\]
Thanks to \eqref{20113} and \eqref{hp add 1} (recall that $ \gamma < 1 $), we thus deduce that for every $ \eps>0 $ small enough
\[
\left(-u'(r_m)\right)^{p-1} 
\le  \frac{u^q(r_m)}{-\frac{p}{q+1} \, (n-1+\eps) \, \frac{\psi'(r_m)}{\psi(r_m)}  + (n-1) \, \frac{\psi'(r_m)}{\psi(r_m)} + o\!\left( \frac{\psi'(r_m)}{\psi(r_m)} \right)} \le C \, r_m^\gamma \, u^q(r_m)
\]
for all $m$ sufficiently large. Hence,
\beq\label{24022}
-\frac{r_m^\gamma \, u'(r_m)}{u(r_m)} \le C \, u^{\frac{q+1-p}{p-1}}(r_m) \, r_m^{\frac{p \gamma}{p-1}} \le C \left( \int_0^{r_m} \Theta^{\frac{1}{p-1}} \, ds \right)^{-1} r_m^{\frac{p \gamma}{p-1}} \,,
\eeq
where in the last step we used estimate \eqref{24021}. Due to \eqref{asymp-theta}, we infer that
\[
\int_0^r \Theta^{\frac{1}{p-1}} \, ds \sim C \int_0^r s^{\frac{\gamma}{p-1}}\,ds \sim C \, r^{\frac{\gamma}{p-1}+1} \qquad \text{as } r \to +\infty \, ,
\]
which plugged in \eqref{24022} yields
\[
-\frac{r_m^\gamma \, u'(r_m)}{u(r_m)} \le C \, r_m^{\gamma-1} \to 0 \qquad \text{as } m \to \infty \, ,
\]
in contradiction with the definition of the sequence $\{r_m\}$. 
\end{proof}

\begin{lemma}\label{lem: decay 3}
Let $u$ be a radial solution to \eqref{LE}. Suppose that \eqref{p-sc} and either \eqref{hp add 1} or \eqref{hp add 2} hold. Then
\beq\label{21110}
\lim_{r \to +\infty} \frac{\left(-u'(r)\right)}{u^\frac{q}{p-1}(r)} \left(\frac{\psi'(r)}{\psi(r)}\right)^\frac{1}{p-1} = \left(\frac{1}{n-1}\right)^\frac{1}{p-1}.
\end{equation}
\end{lemma}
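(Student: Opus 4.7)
The plan is to recast $\Lambda(r):=\left(-u'(r)\right)^{p-1}u^{-q}(r)\,\psi'(r)/\psi(r)$ as a ratio to which L'H\^opital's rule applies. Integrating the radial ODE in \eqref{radial pb} on $(0,r)$ gives the identity $\psi^{n-1}(r)\left(-u'(r)\right)^{p-1}=\int_0^r \psi^{n-1}u^q\,ds$, so
\[
\Lambda(r)=\frac{N(r)}{D(r)}\,,\qquad N(r):=\int_0^r \psi^{n-1}u^q\,ds\,,\qquad D(r):=\frac{\psi^n(r)\,u^q(r)}{\psi'(r)}\,.
\]
Differentiation yields $N'(r)=\psi^{n-1}u^q$ and, after a direct computation,
\[
D'(r)=\psi^{n-1}(r)\,u^q(r)\left[n-q\,\eta(r)-\mu(r)\right]\,,\qquad \eta:=\frac{-u'}{u}\cdot\frac{\psi}{\psi'}\,,\qquad \mu:=\frac{\psi\psi''}{(\psi')^2}\,,
\]
and Lemma \ref{lem: decay 2} supplies the crucial $\eta(r)\to 0$.

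Under hypothesis \eqref{hp add 2}, the second condition expands exactly to $\mu(r)-1\to 0$, so
\[
\frac{N'(r)}{D'(r)}=\frac{1}{n-q\,\eta(r)-\mu(r)}\longrightarrow \frac{1}{n-1}\,.
\]
Under hypothesis \eqref{hp add 1}, $\mu$ need not converge, so I would replace $D$ by the asymptotically equivalent denominator $\tilde D(r):=\psi^{n-1}(r)\,u^q(r)\,r^\gamma/\ell$, which satisfies $D/\tilde D\to 1$ by \eqref{hp add 1} and whose logarithmic derivative avoids $\psi''$ altogether:
\[
\frac{\tilde D'(r)}{\psi^{n-1}u^q}=\frac{r^\gamma}{\ell}\left[(n-1)\frac{\psi'}{\psi}-q\,\frac{-u'}{u}+\frac{\gamma}{r}\right]\longrightarrow n-1\,,
\]
where $r^\gamma\psi'/\psi\to\ell$ handles the first term, Lemma \ref{lem: decay 2} makes the middle term negligible with respect to $\psi'/\psi$, and $\gamma/r$ is negligible with respect to $r^{-\gamma}$ because $\gamma<1$. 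In both cases the derivative ratio tends to $1/(n-1)$.

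To invoke the $\infty/\infty$ form of L'H\^opital's rule it remains to check that $D$ (respectively $\tilde D$) diverges. In either case $D'>0$ (respectively $\tilde D'>0$) eventually, so the denominator is monotone and admits a limit $L\in(0,+\infty]$. If by contradiction $L<+\infty$, the derivative identity forces $\int^{+\infty}\psi^{n-1}u^q\,ds<+\infty$, i.e.~$N$ is bounded; then $\left(-u'(r)\right)^{p-1}=\psi^{1-n}(r)\,N(r)\lesssim \psi^{-(n-1)}(r)$, and integrating $-u'\lesssim \psi^{-(n-1)/(p-1)}$ on $(r,+\infty)$ while exploiting the (at least) exponential growth of $\psi$ recorded in Remark \ref{rem: hp add} yields $u(r)\lesssim \psi^{-(n-1)/(p-1)}(r)$, contradicting Lemma \ref{lem: no very fast decay}. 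Hence $\Lambda(r)\to 1/(n-1)$, which is equivalent to the claimed limit \eqref{21110}.

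The main obstacle lies in case \eqref{hp add 1}, where no direct information on $\psi''$ is available; the remedy is the ad hoc choice of $\tilde D$, whose derivative involves only $\psi'/\psi$, $u'/u$ and $\gamma/r$, all of which are controlled by the standing assumptions and by Lemma \ref{lem: decay 2}.
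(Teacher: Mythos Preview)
Your argument is correct and takes a genuinely different route from the paper's. The paper proceeds in two stages: it first proves that the limit in \eqref{21110} exists by a contradiction argument on a sequence of local extrema of $\Phi$ (or $\Psi$ in case \eqref{hp add 1}), using the extremality condition together with the ODE \eqref{rad eq} to force convergence along that sequence; it then computes the value of the limit by going back to \eqref{rad eq}, showing via a separate L'H\^opital step that $\tfrac{u''}{u'}\tfrac{\psi}{\psi'}\to 0$ (resp.\ $\tfrac{r^\gamma u''}{u'}\to 0$), which leaves $n-1$ as the only surviving term. Your approach bypasses both stages at once: by writing $\Lambda=N/D$ via the integrated equation and applying the $\ast/\infty$ form of L'H\^opital directly, existence and value of the limit are obtained simultaneously. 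The price you pay is the extra step verifying that the denominator actually diverges, for which you invoke Lemma~\ref{lem: no very fast decay}; the paper's argument does not need this lemma at this stage (it only enters earlier, inside the proof of Lemma~\ref{lem: decay 2}).

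One minor point: in the divergence step you assert $u(r)\lesssim \psi^{-(n-1)/(p-1)}(r)$ after integrating $-u'\lesssim\psi^{-(n-1)/(p-1)}$ on $(r,+\infty)$. Under \eqref{hp add 1} with $\gamma\in(0,1)$ this precise exponent need not survive the integration (the factor $\psi/\psi'\sim r^\gamma/\ell$ intervenes), but any strictly smaller positive exponent does, e.g.\ $\beta=\tfrac{n-1}{p-1}-1>0$, via
\[
\lim_{r\to+\infty}\frac{\int_r^{+\infty}\psi^{-(n-1)/(p-1)}\,ds}{\psi^{-\beta}(r)}=\frac{1}{\beta}\lim_{r\to+\infty}\frac{1}{\psi'(r)}=0\,,
\]
and this is all that Lemma~\ref{lem: no very fast decay} requires. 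With this adjustment the argument is complete; your substitution of $\tilde D$ in case \eqref{hp add 1} to avoid any appearance of $\psi''$ is a clean device that has no counterpart in the paper's proof.
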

\begin{proof}
We assume at first that \eqref{hp add 2} holds. To begin with, let us prove by contradiction the existence of the limit in \eqref{21110}. Should the latter not exist, then there would be a sequence $r_m \to +\infty$ of local maxima/minima points for the $C^1$ function 
$$
\Phi(r) := \frac{\left(-u'(r)\right)}{u^\frac{q}{p-1}(r)} \left(\frac{\psi'(r)}{\psi(r)}\right)^\frac{1}{p-1} 
$$
such that $\{ \Phi(r_m) \}$ does not have a limit.
By extremality, for all $ m \in \N $ we have
\[
u''(r_m) \, u(r_m) = \frac{q}{p-1} \left(u'(r_m)\right)^2 - \frac{u(r_m) \, u'(r_m)}{p-1} \, \Gamma(r_m) \, ,
\]
where $\Gamma$ is defined in \eqref{20114}. Up to multiplying \eqref{rad eq} by $u$, this identity entails
\[
\left(-u'(r_m)\right)^{p-1} = \frac{u^{q}(r_m)}{q \, \frac{u'(r_m)}{u(r_m)} - \Gamma(r_m) + (n-1) \, \frac{\psi'(r_m)}{\psi(r_m)}} \, ,
\]
whence
\[
\Phi^{p-1}(r_m) = \frac{\left(-u'(r_m)\right)^{p-1}}{u^q(r_m)} \, \frac{\psi'(r_m)}{\psi(r_m)} = \left[q \, \frac{u'(r_m)}{u(r_m)} \, \frac{\psi(r_m)}{\psi'(r_m)} - \frac{\psi(r_m)}{\psi'(r_m)} \, \Gamma(r_m) + n-1 \right]^{-1} \to \frac{1}{n-1}
\]
as $ m \to \infty $, where in the last passage we have used Lemma \ref{lem: decay 2} and assumption \eqref{hp add 2}. However, by assumption $\{\Phi(r_m)\}$ does not have a limit, thus we have reached a contradiction and the existence of the limit in \eqref{21110} is proved. Now let us compute it. By \eqref{rad eq} we have
\beq\label{24023}
(p-1)\,\frac{u''(r)}{\left(-u'(r)\right)} \frac{\psi(r)}{\psi'(r)} - (n-1) + \frac{u^q(r)}{\left(-u'(r)\right)^{p-1}}\frac{\psi(r)}{\psi'(r)} = 0 \qquad \forall r >0 \, ;
\eeq
what we have proved so far implies that the limit as $r \to +\infty$ of the first term on the left-hand side does exist as well. On the other hand, assumption \eqref{hp add 2} ensures that
\[
\lim_{r \to +\infty} \frac{u''(r)}{u'(r)} \frac{\psi(r)}{\psi'(r)} = \lim_{r \to +\infty} \left( \frac{u''(r)}{u'(r)} - \Gamma(r) \right) \frac{\psi(r)}{\psi'(r)}  =  \lim_{r \to +\infty} \frac{\left[u'(r) \, \psi(r) \left(\psi'(r)\right)^{-1} \right]'}{u'(r)} \, .
\]
This implies that the rightmost limit does exist and hence, by Lemma \ref{lem: decay 2} and L'H\^opital's rule,
\[
 0 = \lim_{r \to +\infty}\frac{u'(r)}{u(r)} \frac{\psi(r)}{\psi'(r)} =\lim_{r \to +\infty} \frac{\left[u'(r) \, \psi(r) \left(\psi'(r)\right)^{-1} \right]'}{u'(r)} = \lim_{r \to +\infty} \frac{u''(r)}{u'(r)} \frac{\psi(r)}{\psi'(r)} \, .
\] 
Taking advantage of this information in \eqref{24023}, we necessarily obtain
\[
\lim_{r \to +\infty} \frac{u^q(r)}{\left(-u'(r)\right)^{p-1}}\frac{\psi(r)}{\psi'(r)} = n-1 \, ,
\]
which is equivalent to \eqref{21110}. 

The case when \eqref{hp add 1} holds can be treated in a similar way. To prove the existence of the limit in \eqref{21110}, we argue again by contradiction: claiming that the latter does not exist is equivalent to admitting that one can pick a sequence $r_m \to +\infty$ of local maxima/minima points for the $C^1$ function
$$
\Psi(r) := \frac{\left(-u'(r)\right)}{u^\frac{q}{p-1}(r)} \, r^{-\frac{\gamma}{p-1}}
$$
such that $\{\Psi(r_m)\}$ does not have a limit. By extremality, for all $ m \in \N $ we have
\[
u''(r_m) \, u(r_m) = \frac{q}{p-1} \left(u'(r_m)\right)^2 + \frac{\gamma \, u(r_m) \, u'(r_m)}{(p-1)\, r_m} \, .
\]
From this identity, using \eqref{rad eq} multiplied by $u$, we easily deduce that
\[
\left(-u'(r_m)\right)^{p-1} = \frac{u^q(r_m)}{q \, \frac{u'(r_m)}{u(r_m)} + \frac{\gamma}{r_m} + (n-1) \, \frac{\psi'(r_m)}{\psi(r_m)}  } \, ,
\]
which (thanks to Lemma \ref{lem: decay 2}) in turn yields $\Psi^{p-1}(r_m) \to 1/[(n-1)\ell]$ as $ m \to \infty $, a contradiction. Thus, the existence of the limit of $ \Psi(r) $ as $ r \to +\infty $ is proved. In order to compute it, let us observe that still by \eqref{rad eq} we have
\beq\label{24026}
(p-1) \, \frac{ r^\gamma \, u''(r)}{\left(-u'(r)\right)} - (n-1)  \, \frac{r^\gamma \, \psi'(r)}{\psi(r)} + \frac{r^\gamma \, u^q(r)}{\left(-u'(r)\right)^{p-1}} = 0 \qquad \forall r >0 \, .
\eeq
What we have proved so far, along with \eqref{hp add 1}, ensures the existence of the limit of the first term on the left-hand side, which can be rewritten as
\[
\lim_{r \to +\infty} \frac{r^\gamma \, u''(r)}{u'(r)}  = \lim_{r \to +\infty} r^\gamma \left( \frac{u''(r)}{u'(r)} + \frac{\gamma}{r} \right)  =  \lim_{r \to +\infty} \frac{\left(r^\gamma \, u'(r)\right)'}{u'(r)} \, .
\]
Therefore, by Lemma \ref{lem: decay 2} and L'H\^opital's rule, we end up with
\[
 0 = \lim_{r \to +\infty}\frac{r^\gamma \, u'(r)}{u(r)}  =\lim_{r \to +\infty} \frac{\left(r^\gamma \, u'(r)\right)'}{u'(r)} = \lim_{r \to +\infty} \frac{r^\gamma \, u''(r)}{u'(r)} \, ,
\] 
so that \eqref{24026} yields  
\[
\lim_{r \to +\infty} \frac{r^\gamma \, u^q(r)}{\left(-u'(r)\right)^{p-1}} = (n-1) \ell \, ,
\]
which is equivalent to the desired result when assumption \eqref{hp add 1} holds.
\end{proof}

We are now in position to prove the asymptotics of solutions under either \eqref{hp add 1} or \eqref{hp add 2}.
 
\begin{proof}[Proof of Theorem \ref{thm: decay}-($i$)]
By Lemma \ref{lem: decay 3}, for every $\eps>0$ small enough there exists $r_\eps>0$ such that
\[
\left[\left(\frac{1}{n-1}-\eps\right)\frac{\psi(r)}{\psi'(r)}\right]^\frac{1}{p-1} < - \frac{u'(r)}{u^\frac{q}{p-1}(r)} < \left[\left(\frac{1}{n-1}+\eps\right)\frac{\psi(r)}{\psi'(r)}\right]^\frac{1}{p-1} \qquad \forall r > r_\eps \, ,
\]
whereas, for a possibly larger $ r_\eps $, we have
\[
(n-1-\eps) \, \Theta(r) <\frac{\psi(r)}{\psi'(r)} < (n-1+\eps) \, \Theta(r) \qquad \forall r > r_\eps
\]
in view of \eqref{asymp1}. Therefore, by combining the above estimates, we deduce that for a suitable $C>0$ (depending only on $n$) it holds 
\[
(1-C \eps) (\Theta(r))^\frac{1}{p-1} < - \frac{u'(r)}{u^\frac{q}{p-1}(r)} < (1+C \eps) (\Theta(r))^\frac{1}{p-1} \qquad \forall r>r_\eps \, .
\]
By integrating, we obtain
\[
(1-C \eps) \, \frac{q+1-p}{p-1} \int_{r_\eps}^r \Theta^\frac{1}{p-1} \, ds < u^{-\frac{q+1-p}{p-1}}(r) - u^{-\frac{q+1-p}{p-1}}(r_\eps) < (1+C \eps) \, \frac{q+1-p}{p-1} \int_{r_\eps}^r \Theta^\frac{1}{p-1} \, ds  \, ,
\]
for every $r>r_\eps$, that is
\begin{multline*}
 \left[ 1+C \eps + \frac{p-1}{q+1-p} \left( \int_{r_\eps}^r \Theta^\frac{1}{p-1} \, ds \right)^{-1} u^{-\frac{q+1-p}{p-1}} (r_\eps) \right]^{-1} 
 < \,  \frac{q+1-p}{p-1} \left( \int_{r_\eps}^{r} \Theta^{\frac{1}{p-1}} \, ds \right) u^{\frac{q+1-p}{p-1}}(r) \\
 < \, \left[ 1-C \eps + \frac{p-1}{q+1-p} \left( \int_{r_\eps}^r \Theta^\frac{1}{p-1} \, ds \right)^{-1} u^{-\frac{q+1-p}{p-1}} (r_\eps) \right]^{-1} 
\end{multline*}
for all $ r>r_\eps $. The thesis follows by letting first $ r \to +\infty $ and then $ \eps \to 0 $, using assumption \eqref{p-sc}.
\end{proof}

Finally, we show that without assuming \eqref{hp add 1} or \eqref{hp add 2} the just established asymptotic behavior fails.

\begin{proof}[Proof of Proposition \ref{prop: oscillazioni}]
As a key starting point, we claim that thanks to \eqref{hp fail} there exist $ \hat{\kappa} > 0 $ and $  \hat{r}>0 $ such that 
\beq\label{ee1}
\int_0^r \Theta^{\frac{1}{p-1}}(s) \left( \int_0^s \Theta^{\frac{1}{p-1}} \, dt \right)^{-\frac{(p-1)q}{q+1-p}-1} \int_0^s \psi^{n-1} \, dt \, ds \ge \hat{\kappa} \, \int_0^r \psi^{n-1} \, ds \left( \int_0^r \Theta^{\frac{1}{p-1}} \, ds \right)^{-\frac{(p-1)q}{q+1-p}}
\eeq
for every $r \ge \hat{r}$. Indeed, formula \eqref{hp fail} can be rewritten as
\beq \label{useful}
\liminf_{r \to +\infty}  \frac{\int_0^r \Theta^{\frac{1}{p-1}}(s) \int_0^s \psi^{n-1} \, dt \, ds }{\int_0^r \Theta^{\frac{1}{p-1}} \, ds \, \int_0^r \psi^{n-1} \, ds } > 0 \quad \iff \quad \frac{\int_0^r \Theta^{\frac{1}{p-1}}(s) \int_0^s \psi^{n-1} \, dt \, ds}{\int_0^r \Theta^{\frac{1}{p-1}} \, ds \, \int_0^r \psi^{n-1} \, ds} \ge  \hat{\kappa}
\eeq
for every $r \ge \hat{r}$,
with constants $ \hat{\kappa} , \hat{r} > 0 $ as above, from which \eqref{ee1} easily follows by monotonicity of the innermost integral involving $ \Theta $. 

The validity of \eqref{p-sc} under \eqref{hp fail} is a consequence of \eqref{useful}, since the latter is equivalent to
$$
\left[\log\left( \int_0^r \Theta^{\frac{1}{p-1}} \, ds  \right) \right]' \ge \hat{\kappa} \left[\log\left( \int_0^r \Theta^{\frac{1}{p-1}}(s) \int_0^s \psi^{n-1} \, dt \, ds \right) \right]'  \qquad \forall r \ge \hat{r} 
$$
and the monotonicity of $ \psi $ ensures that 
$$
\int_0^{r} \Theta^{\frac{1}{p-1}}(s) \int_0^s \psi^{n-1} \, dt \, ds \ge \frac{p-1}{2p-1} \, \frac{\left(\int_0^r \psi^{n-1} \, ds \right)^{\frac{2p-1}{p-1}}}{\psi^{\frac{p(n-1)}{p-1}}(r)} \qquad \forall r>0 \, .
$$
If the ratio on the right-hand side were bounded, this would imply that $ \int_0^{r_0} \psi^{n-1} \, ds = +\infty $ at some finite $ r_0>0 $, which is absurd.

In order to establish that \eqref{hp add 3} cannot hold, we can argue by contradiction. Should such an asymptotic behavior be true, then upon integrating \eqref{radial pb} at infinity we would end up with the identity 
\beq\label{ee2}
\lim_{r \to +\infty} \left( \int_0^r  \Theta^{\frac{1}{p-1}} \, ds \right)^\frac{p-1}{q+1-p} \int_r^{+\infty} \left( \frac{\int_0^s \, \psi^{n-1} \, u^q \, dt }{\psi^{n-1}(s)} \right)^{\frac{1}{p-1}} ds =  \left(\frac{p-1}{q+1-p}\right)^\frac{p-1}{q+1-p} .
\eeq
Now we observe that 
\beq\label{ee2-bis}
\lim_{r \to +\infty} \left( \int_0^r  \Theta^{\frac{1}{p-1}} \, ds \right)^\frac{p-1}{q+1-p} \, \int_r^{+\infty} \frac{1}{\psi^{\frac{n-1}{p-1}}} \, ds = 0 \, .
\eeq
This is a direct consequence of the fact that $ \psi'(r) \ge 1 $ and $ \psi(r) \ge r $:
$$
\begin{aligned}
\left( \int_0^r  \Theta^{\frac{1}{p-1}} \, ds \right)^\frac{p-1}{q+1-p} & \, \int_r^{+\infty} \frac{1}{\psi^{\frac{n-1}{p-1}}} \, ds \le \left[ \int_0^r  \left( \frac{\int_0^s \psi^{n-1} \, \psi' \, dt }{\psi^{n-1}(s)} \right)^{\frac{1}{p-1}} \, ds \right]^\frac{p-1}{q+1-p} \int_r^{+\infty} \frac{1}{\psi^{\frac{n-1}{p-1}}} \, ds \\
& \le  \, \frac{1}{n^{\frac{1}{q+1-p}}} \left( \int_0^r \psi^{\frac{1}{p-1}} \, ds \right)^\frac{p-1}{q+1-p} \int_r^{+\infty} \frac{1}{\psi^{\frac{n-1}{p-1}}} \, ds \\
& \le  \, \frac{r^{\frac{p-1}{q+1-p}}}{n^{\frac{1}{q+1-p}}} \, \psi^{\frac{1}{q+1-p}}(r) \, \int_r^{+\infty} \frac{1}{\psi^{\frac{n-1}{p-1}}} \, ds 
\le  \, \frac{r^{\frac{p-1}{q+1-p}}}{n^{\frac{1}{q+1-p}}} \, \int_r^{+\infty} \frac{1}{\psi^{\frac{n-1}{p-1} -\frac{1}{q+1-p} }} \, ds \\
& \le  \, \frac{r^{\frac{p-1}{q+1-p}}}{n^{\frac{1}{q+1-p}}} \, \int_r^{+\infty} \frac{1}{s^{\frac{n-1}{p-1} -\frac{1}{q+1-p} }} \, ds =  \, \frac{r^{\frac{p}{q+1-p} - \frac{n-p}{p-1} }}{n^{\frac{1}{q+1-p}} \left( \frac{n-1}{p-1} -\frac{1}{q+1-p} -1 \right)}  \, ,
\end{aligned}
$$
and it is readily seen that for $ p \in (1,n) $ and $ q +1 \ge p^\ast $ the power appearing in the last identity is negative. In particular, from \eqref{hp add 3}, \eqref{ee2} and \eqref{ee2-bis} we necessarily deduce that 
$$
\lim_{s \to + \infty} \int_0^s \, \psi^{n-1} \, u^q \, dt = \lim_{s \to + \infty} \int_{0}^s \, \psi^{n-1}(t) \left( \int_0^t  \Theta ^{\frac{1}{p-1}} \, d\tau \right)^{-\frac{(p-1)q}{q+1-p}} dt = +\infty \, .
$$
As a result, we can assert that \eqref{ee2} is actually equivalent to 
\beq\label{ee4}
\lim_{r \to +\infty} \left( \int_0^r  \Theta^{\frac{1}{p-1}} \, ds \right)^\frac{p-1}{q+1-p} \int_r^{+\infty} \left[ \frac{\int_{0}^s \, \psi^{n-1}(t) \left( \int_0^t  \Theta^{\frac{1}{p-1}} \, d\tau \right)^{-\frac{(p-1)q}{q+1-p}} dt }{\psi^{n-1}(s)} \right]^{\frac{1}{p-1}} ds = \, \frac{q+1-p}{p-1} \, .
\eeq
Integration by parts, along with \eqref{ee1}, yields 
$$
\begin{aligned}
 \int_{0}^s \, \psi^{n-1}(t) \left( \int_0^t  \Theta^{\frac{1}{p-1}} \, d\tau \right)^{-\frac{(p-1)q}{q+1-p}} dt & =
 \int_0^s \psi^{n-1} \, dt  \left( \int_0^s  \Theta^{\frac{1}{p-1}} \, dt \right)^{-\frac{(p-1)q}{q+1-p}}  \\
 & \quad + \frac{(p-1)q}{q+1-p} \, \int_0^s \Theta^{\frac{1}{p-1}}(t) \left( \int_0^t \Theta^{\frac{1}{p-1}} \, d\tau \right)^{-\frac{(p-1)q}{q+1-p}-1} \int_0^t \psi^{n-1} \, d\tau \, dt \\
 & \ge \left[ 1 + \frac{(p-1)q \hat{\kappa}}{q+1-p} \right] \int_0^s \psi^{n-1} \, dt  \left( \int_0^s  \Theta^{\frac{1}{p-1}} \, dt \right)^{-\frac{(p-1)q}{q+1-p}} 
\end{aligned}
$$
for all $ s \ge \hat{r} $, whence 
$$
\begin{aligned}
 \int_r^{+\infty} & \left[ \psi^{1-n}(s) \int_{0}^s \, \psi^{n-1}(t) \left( \int_0^t  \Theta^{\frac{1}{p-1}} \, d\tau \right)^{-\frac{(p-1)q}{q+1-p}} dt  \right]^{\frac{1}{p-1}} ds  \\
 & \qquad \ge  \left[ 1 + \frac{(p-1)q\hat{\kappa}}{q+1-p} \right]^{\frac{1}{p-1}} \int_r^{+\infty} \Theta^{\frac{1}{p-1}}(s) \left( \int_0^s  \Theta^{\frac{1}{p-1}} \, dt \right)^{-\frac{q}{q+1-p}}  ds \\
& \qquad =  \, \frac{q+1-p}{p-1} \left[ 1 + \frac{(p-1)q\hat{\kappa}}{q+1-p} \right]^{\frac{1}{p-1}} \left( \int_0^r  \Theta^{\frac{1}{p-1}} \, ds \right)^{-\frac{p-1}{q+1-p}} 
\end{aligned}
$$
for all $r \ge \hat{r}$, which is clearly in contradiction with \eqref{ee4}. 

If \eqref{hp add 1} holds with $ \gamma=1 $, then by L'H\^opital's rule it is readily seen that 
$$
\lim_{r \to +\infty} \frac{\Theta(r)}{r} = \frac{1}{(n-1)\ell + 1} \, ,
$$
whence 
$$
\lim_{r \to +\infty} \frac{ \int_0^r \Theta^{\frac{p}{p-1}} \, \psi^{n-1} \, ds  }{ \psi^{n-1}(r) \, \Theta(r) \int_0^r \Theta^{\frac{1}{p-1}} \, ds } =  \frac{p}{p-1} \, \lim_{r \to +\infty} \frac{ \int_0^r s^{\frac{p}{p-1}} \, \psi^{n-1}(s) \, ds  }{r^{\frac{2p-1}{p-1}}  \psi^{n-1}(r)} = \frac{p}{2p-1+(n-1)(p-1)\ell} > 0 \, ,
$$
where in the last passage we have used again L'H\^opital's rule. 

Finally, given $ \alpha>0 $, let us exhibit a $p$-stochastically complete Cartan-Hadamard model manifold, satisfying \eqref{hp fail exp}, where the limit in \eqref{hp add 3} of the solution $u$ to \eqref{radial pb} that starts from $ u(0)=\alpha $ cannot exist. We will proceed by means of a recursive construction. First of all, note that in view of the asymptotic results established above we know in particular that if $\psi $ complies with 
$$
\lim_{r \to +\infty} \frac{\psi'(r)}{\psi(r)} \in (0,+\infty)
$$
then $u$ satisfies \eqref{hp add 3}. On the contrary, by reasoning similarly to the above disproof of \eqref{hp add 3} under \eqref{hp fail}, it is not difficult to check that if $ \psi $ fulfills
$$
\lim_{r \to +\infty} \frac{\psi(r)}{r} \in [1,+\infty)
$$
then 
\beq\label{asymp-1}
\liminf_{r \to +\infty} \left( \int_0^r  \Theta^{\frac{1}{p-1}} \, ds \right)^\frac{p-1}{q+1-p} u(r) \le \left(\frac{p-1}{q+1-p}  \right)^\frac{p-1}{q+1-p} \left[ 1-\frac{pq}{n(q+1-p)} \right]^{\frac{1}{q+1-p}} .
\eeq
Our strategy strongly relies on this dichotomy. We pick an increasing sequence of radii $ \{ r_k \}_{k \in \N} \subset [0,+\infty)  $ and a corresponding sequence of smooth convex functions $ \{ \psi_k \}_{k \in \N} $, which will be carefully chosen below, such that the global function $\psi$ defined by
\beq\label{psi-recursive}
\psi(r) := \psi_k(r) \qquad \text{for every } r \in [r_k , r_{k+1} ) 
\eeq
gives rise to a $p$-stochastically complete Cartan-Hadamard model manifold that meets our purpose. For the sake of readability, along the $k$-th recursive step we will (implicitly) still let $ \psi $ denote the function that fulfills \eqref{psi-recursive} on the whole $ [r_k,+\infty) $ rather than on $ [r_k,r_{k+1} ) $. 

We start the iteration by taking $ r_0=0 $ and $ \psi_0(r) = r $. In particular, due to \eqref{asymp-1} we can pick $ r_1 \ge 1 $ so large that 
$$
\left( \int_0^{r_1}  \Theta^{\frac{1}{p-1}} \, ds \right)^\frac{p-1}{q+1-p} u(r_1) < \left(\frac{p-1}{q+1-p}  \right)^\frac{p-1}{q+1-p} \left( 1-\frac 1 n \right)^{\frac{1}{q+1-p}}  , \qquad u(r_1) < 1 \, ,
$$ 
whereas the next function $ \psi_1 $ is chosen to be smooth, convex, complying with
$$
\lim_{r \to +\infty} \frac{\psi'_1(r)}{\psi_1(r)} = 2
$$
and gluing to $ \psi_0 $ in such a way that $ \psi $ is also globally smooth and convex. Hence, thanks to Theorem \ref{thm: decay}, the solution constructed so far satisfies \eqref{hp add 3}; we are thus allowed to select $ r_2 \ge r_1 + 1 $ so large that 
$$
\left( \int_0^{r_2}  \Theta^{\frac{1}{p-1}} \, ds \right)^\frac{p-1}{q+1-p} u(r_2) > \left(\frac{p-1}{q+1-p}  \right)^\frac{p-1}{q+1-p} \left( 1-\frac {1} {2n} \right)^{\frac{1}{q+1-p}} \, , \qquad u(r_2) < \frac{1}{2} \, , \qquad  \frac{\psi_1(r_2)}{e^{ r_2}} \ge 1 \, .
$$
The subsequent (recursive) steps of the procedure go as follows. Given $ r_0$, $\ldots r_k $ and $ \psi_0$, $ \ldots \psi_{k-1} $, with $ k \ge 2 $ even, first we choose $ \psi_k $ to be smooth, convex, fulfilling
$$
\lim_{r \to +\infty} \frac{\psi_k(r)}{r} \in (1,+\infty)
$$
and gluing to $ \psi_{k-1} $ in such a way that $ \psi $ is also globally smooth and convex. Due to \eqref{asymp-1} we can then pick $ r_{k+1} \ge r_k + 1 $ so large that 
\beq \label{liminf}
\left( \int_0^{r_{k+1}}  \Theta^{\frac{1}{p-1}} \, ds \right)^\frac{p-1}{q+1-p} u(r_{k+1}) < \left(\frac{p-1}{q+1-p}  \right)^\frac{p-1}{q+1-p} \left( 1-\frac 1 n \right)^{\frac{1}{q+1-p}} , \qquad u(r_{k+1}) < \frac{1}{2^{k}} \, .
\eeq
Similarly, given $ r_0$, $ \ldots r_k $ and $ \psi_0$, $ \ldots \psi_{k-1} $, with $k \ge 3$ odd, we choose $ \psi_{k} $ to be smooth, convex, complying with 
$$
\lim_{r \to +\infty} \frac{\psi'_k(r)}{\psi_k(r)} = 2 k
$$
and gluing to $ \psi_{k-1} $ in such a way that $ \psi $ is globally smooth and convex. The solution constructed so far satisfying \eqref{hp add 3}, we can therefore select $ r_{k+1} \ge r_k + 1 $ so large that 
\beq \label{limsup}
\begin{gathered}
\left( \int_0^{r_{k+1}}  \Theta^{\frac{1}{p-1}} \, ds \right)^\frac{p-1}{q+1-p} u(r_{k+1}) > \left(\frac{p-1}{q+1-p}  \right)^\frac{p-1}{q+1-p} \left( 1-\frac {1} {2n} \right)^{\frac{1}{q+1-p}} ,  \\ u(r_{k+1}) < \frac{1}{2^k} \, , \qquad \frac{\psi_k(r_{k+1})}{e^{k r_{k+1}}} \ge 1 \, .
\end{gathered}
\eeq
By applying iteratively this procedure, we end up having constructed a Cartan-Hadamard model manifold represented by a function $ \psi $ defined as in \eqref{psi-recursive} where, by virtue of \eqref{liminf} and \eqref{limsup}, the solution $u$ to \eqref{radial pb} fulfills
\begin{multline*}
\liminf_{r \to +\infty} \left( \int_0^r  \Theta^{\frac{1}{p-1}} \, ds \right)^\frac{p-1}{q+1-p} u(r)  \le \left(\frac{p-1}{q+1-p}  \right)^\frac{p-1}{q+1-p} \left( 1-\frac 1 n \right)^{\frac{1}{q+1-p}} \\
 <  \left(\frac{p-1}{q+1-p}  \right)^\frac{p-1}{q+1-p} \left( 1-\frac {1} {2n} \right)^{\frac{1}{q+1-p}} 
\le  \, \limsup _{r \to +\infty} \left( \int_0^r  \Theta^{\frac{1}{p-1}} \, ds \right)^\frac{p-1}{q+1-p} u(r),
\end{multline*}
and $u(r) \to 0$ as $r \to +\infty$,
while
$$
\limsup_{k \to \infty} \frac{\psi(r_k)}{e^{\ell r_k}} = + \infty \qquad \forall \ell>0 \, .
$$
The thesis is therefore proved (note that in the light of Theorem \ref{thm: sc vs si}, the fact that $u\to 0$ at infinity ensures $p$-stochastic completeness). Let us point out that, in each step, the choice of $ \psi_k $ depends only upon $ \{ r_i \}_{i \in \{ 0, \ldots k \} } $ and $ \{ \psi_i \}_{i \in \{ 0 , \ldots k-1 \} } $, whereas the choice of $ r_{k+1} $ depends upon the same quantities plus the solution $ u $ constructed on $ [0,+\infty) $ with $ \psi(r) \equiv \psi_{k}(r) $ for all $ r \ge r_k $ (thus only on $ \alpha,q,p,n $), leaving however such a solution unchanged in the interval $ [0,r_k] $. 
\normalcolor
\end{proof}

\subsection{Proof of the main results for \texorpdfstring{$\boldsymbol p$}{}-stochasically \emph{incomplete} manifolds.}

Now we address the case when the Cartan-Hadamard manifold at hand is $p$-stochastically incomplete, namely \eqref{p-si} holds. 

We will first need the following elementary lemma, whose simple proof is omitted.

\begin{lemma}\label{lem tecnico}
For all $\alpha>1$ and $\eps>0$, there exists $C_\eps>0$ such that 
\[
\left(x+y\right)^\alpha \le (1+\eps) \, x^\alpha + C_\eps \, y^\alpha \qquad \forall x,y>0 \, .
\] 
\end{lemma}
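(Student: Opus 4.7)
The plan is to reduce the two-variable inequality to a one-variable one by exploiting the positive $\alpha$-homogeneity of both sides. Dividing through by $x^\alpha$ and setting $t := y/x > 0$, the claim becomes equivalent to
\[
(1+t)^\alpha \le (1+\eps) + C_\eps \, t^\alpha \qquad \text{for every } t > 0.
\]

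I would then split the analysis into a small-$t$ and a large-$t$ regime about the threshold $t_\eps := (1+\eps)^{1/\alpha} - 1$, which is strictly positive since $\alpha > 1$ and $\eps > 0$ and depends only on these two parameters. For $t \in (0, t_\eps]$, monotonicity of $s \mapsto (1+s)^\alpha$ gives $(1+t)^\alpha \le (1+t_\eps)^\alpha = 1+\eps$, so the inequality holds regardless of the value of $C_\eps$. For $t > t_\eps$, I would factor $t^\alpha$ out to write
\[
(1+t)^\alpha = t^\alpha \left( 1 + \tfrac{1}{t} \right)^{\!\alpha} \le \left( 1 + \tfrac{1}{t_\eps} \right)^{\!\alpha} t^\alpha ,
\]
so that the choice $C_\eps := \left( 1 + 1/t_\eps \right)^\alpha$ closes the second regime. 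Since this $C_\eps$ depends only on $\eps$ and $\alpha$, the two regimes combine to give the claim.

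No step in this argument is delicate; the only thing to verify is that $t_\eps$ is strictly positive, which is immediate. As an alternative route, one could use the convexity of $s \mapsto s^\alpha$ (valid for $\alpha>1$) to obtain, for every $\lambda \in (0,1)$,
\[
(x+y)^\alpha = \left[ \lambda \cdot \tfrac{x}{\lambda} + (1-\lambda) \cdot \tfrac{y}{1-\lambda} \right]^{\!\alpha} \le \lambda^{1-\alpha} \, x^\alpha + (1-\lambda)^{1-\alpha} \, y^\alpha ,
\]
and then fix $\lambda$ close enough to $1$ so that $\lambda^{1-\alpha} \le 1 + \eps$, at the price of taking $C_\eps := (1-\lambda)^{1-\alpha}$. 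Either route is elementary; the split-interval argument seems the most transparent.
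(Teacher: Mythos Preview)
Your argument is correct; both the split-interval route and the convexity alternative establish the inequality cleanly. The paper itself omits the proof entirely (``whose simple proof is omitted''), so there is nothing to compare against --- your write-up would serve well as the missing justification.
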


\begin{proof}[Proof of Theorem \ref{thm: sc vs si}-($ii$)]
As concerns the existence of infinitely-many radial solutions to \eqref{LE}, and the fact that they all satisfy \eqref{subs:1}, the same observations as in the proof of case ($i$) hold.

Let then $u$ be any solution to \eqref{radial pb} under \eqref{p-si}, and suppose by contradiction that $u(r) \to 0$ as $r \to +\infty$. The case $\psi(r) = r$ does not fulfill \eqref{p-si}, and this necessarily means that $\Mb^n \not \equiv \R^n$. Therefore, Lemma \ref{lem: P strict neg} holds and $P_u(r) \le P_u(\overline{r}) =: -C <0$ for all $ r>\overline{r} $, provided $ \overline{r} $ is large enough (here and below $C$ or analogous constants will not be relabeled). In particular, we infer that for every $r> \overline{r}$
\[
\psi^{n-1}(r) \left|u'(r)\right|^{p-2} u'(r) \, u(r) \le - C \quad \iff \quad u^\frac{1}{p-1}(r)  \, u'(r) \le - C \, \psi^\frac{1-n}{p-1}(r)
\]
where we recall that $u'<0$ and $u>0$.
By integrating on $(r,+\infty)$, since we are supposing that $u \to 0$ at infinity, we thus obtain that
\[
-\frac{p-1}{p} \, u^\frac{p}{p-1}(r) \le - C \int_r^{+\infty}  \psi^\frac{1-n}{p-1} \, ds
\]
for all large $r$ enough, which in turn gives
\beq\label{1si}
\left(\int_r^{+\infty}  \psi^\frac{1-n}{p-1} \, ds \right)^\frac{p-1}{p} \le C \, u(r) \to 0 \qquad \text{as $r \to +\infty$} \, .
\eeq
Now we claim that
\beq\label{2si}
\limsup_{r \to +\infty} \frac{u'(r)}{u(r)} \, \psi^\frac{n-1}{p-1}(r) \int_r^{+\infty} \psi^\frac{1-n}{p-1} \, ds  > - 1 \, .
\eeq
To prove the claim, we argue by contradiction and suppose that the above $\limsup$ is less than or equal to $-1$: that is, for all $\eps>0$ there exists $r_\eps>1/\eps$ such that
\[
\frac{u'(r)}{u(r)} \, \psi^\frac{n-1}{p-1}(r) \int_r^{+\infty} \psi^\frac{1-n}{p-1} \, ds \le  - 1 + \eps \qquad \forall r>r_\eps \, .
\]
By integrating we obtain
\[
\log\left(\frac{u(r)}{u(r_\eps)}\right) \le \left( 1 - \eps \right) \log \left( \frac{\int_{r}^{+\infty} \psi^\frac{1-n}{p-1} \, ds }{\int_{r_\eps}^{+\infty} \psi^\frac{1-n}{p-1} \, ds } \right) \qquad \forall r>r_\eps 
\]
(note that the integrals on the right hand side are finite as $ \psi(r) \ge r $ and $ p<n $). In turn, this implies that there exists $C_\eps>0$ such that
\[
u(r) \le C_\eps \left(\int_{r}^{+\infty} \psi^\frac{1-n}{p-1} \, ds\right)^{1- \eps}  \qquad \forall r>r_\eps \, .
\]
However, by taking $\eps<1-(p-1)/p$, a comparison with \eqref{1si} yields
\[
0<C_\eps \le \left(\int_{r}^{+\infty} \psi^\frac{1-n}{p-1} \, ds \right)^{1- \eps- \frac{p-1}{p}} \to 0 \qquad \text{as } r \to +\infty \, , 
\]
which is a contradiction. This proves the claim \eqref{2si}, and hence there exist $\delta \in (0,1)$ and a sequence $ r_m \to +\infty$ such that
\beq\label{3si}
\frac{u'(r_m)}{u(r_m)} \, \psi^\frac{n-1}{p-1}(r_m) \int_{r_m}^{+\infty} \psi^\frac{1-n}{p-1} \, ds > - 1+\delta \qquad \text{for $m$ large enough} \, .
\eeq
Now we go back to the equation solved by $u$ in \eqref{radial pb}: by integrating it on $(r_m,r)$, and using the fact that $u'<0$, we deduce that
\[
\left(-u'(r)\right)^{p-1} = \frac{\psi^{n-1}(r_m) \left(-u'(r_m)\right)^{p-1}}{\psi^{n-1}(r)} + \psi^{1-n}(r)\int_{r_m}^r \psi^{n-1} \, u^q \, ds \qquad \forall r > r_m \, .
\]
Thus
\beq\label{4si}
-u'(r) = \left[ \frac{\psi^{n-1}(r_m) \left(-u'(r_m)\right)^{p-1}}{\psi^{n-1}(r)} + \psi^{1-n}(r)\int_{r_m}^r \psi^{n-1} \, u^q \, ds \right]^\frac{1}{p-1} \qquad \forall r > r_m \, ,
\eeq
and to proceed further we distinguish between two subcases: $p \ge 2$ or $p<2$. 

If $p \ge 2$ then $1/(p-1) \le 1$, and hence $(x+y)^{1/(p-1)} \le x^{1/(p-1)} + y^{1/(p-1)}$ for all $x,y>0$. Therefore, from \eqref{4si} we infer that
$$
\begin{aligned}
-u'(r) 
\le & \, \frac{\psi^{\frac{n-1}{p-1}} (r_m) \left(-u'(r_m)\right)}{\psi^{\frac{n-1}{p-1}}(r)} + u^\frac{q}{p-1}(r_m) \left( \psi^{1-n}(r)\int_{0}^r \psi^{n-1} \, ds \right)^\frac{1}{p-1}
\end{aligned}
$$
for all $ r>r_m $, where in the last passage we have used the the monotonicity of $u$. Note that the term inside brackets is nothing but the function $\Theta(r)$ defined in \eqref{def-theta}, so that a further integration on $(r_m,r)$ gives the estimate
\beq\label{5si}
\begin{aligned}
u(r)
\ge  \, u(r_m) \left(1 + \frac{u'(r_m) }{u(r_m)} \, \psi^{\frac{n-1}{p-1}} (r_m) \int_{r_m}^{+\infty} \psi^{\frac{1-n}{p-1}} \, ds - u^{\frac{q+1-p}{p-1}}(r_m) \int_{r_m}^{+\infty} \Theta^\frac{1}{p-1} \, ds\right)
\end{aligned}
\eeq 
for all $ r>r_m $. Let us focus on the term inside brackets on the right-hand side. By \eqref{3si}, assumption \eqref{p-si} and the fact that $u(r_m) \to 0$ as $m \to \infty$, we obtain
\[
1 + \frac{u'(r_m) }{u(r_m)} \, \psi^{\frac{n-1}{p-1}} (r_m) \int_{r_m}^{+\infty} \psi^{\frac{1-n}{p-1}} \, ds - u^{\frac{q+1-p}{p-1}}(r_m) \int_{r_m}^{+\infty} \Theta^\frac{1}{p-1} \, ds \ge \delta + o(1) \, ,
\]
where $o(1) \to 0$ as $m \to \infty$. In particular, upon taking $m$ sufficiently large and going back to \eqref{5si}, we end up with the estimate $u(r) \ge \delta/2 u(r_m)$ for every $r>r_m$, 
which is in contradiction with the fact that $u(r) \to 0$ as $r \to +\infty$. This shows that, under assumption \eqref{p-si} and supposing that $p \ge 2$, we necessarily have $u(r) \to \lambda >0$ as $r \to +\infty$.

Let us now consider the case $p<2$. Since $1/(p-1)>1$, it is no more true that $(x+y)^{1/(p-1)} \le x^{1/(p-1)} + y^{1/(p-1)}$ for all $x,y>0$. However, we can exploit Lemma \ref{lem tecnico} with $\eps=\delta/2$, where $\delta$ is defined by \eqref{3si}. Thanks to this choice, identity \eqref{4si} implies that
\[
-u'(r)  \le \left(1+\frac{\delta}2\right) \frac{\psi^{\frac{n-1}{p-1}} (r_m) \left(-u'(r_m)\right)}{\psi^{\frac{n-1}{p-1}}(r)} + C_{\frac \delta 2} \, u^\frac{q}{p-1}(r_m) \left( \psi^{1-n}(r)\int_{0}^r \psi^{n-1} \, ds \right)^\frac{1}{p-1} \qquad \forall r>r_m \, .
\]
Similarly to the case $p \ge 2$, we thus deduce that 
\[
u(r) \ge u(r_m) \left[1 + \left(1+\frac{\delta}2\right) \frac{u'(r_m) }{u(r_m)} \, \psi^{\frac{n-1}{p-1}} (r_m) \int_{r_m}^{+\infty} \psi^{\frac{1-n}{p-1}} \, ds - C_{\frac \delta 2} \, u^{\frac{q+1-p}{p-1}}(r_m) \int_{r_m}^{+\infty} \Theta^\frac{1}{p-1} \, ds \right]
\]
for all $ r>r_m $. By virtue of \eqref{3si}, assumption \eqref{p-si} and using the fact that $u(r_m) \to 0$ as $m \to \infty$, the right-hand side is greater than $u(r_m) \left[1+(1+\delta/2)(-1+\delta) + o(1)\right]$, with $o(1) \to 0$ as $m \to \infty$. As a result, by taking $m$ sufficiently large we end up with the estimate $u(r) \ge \delta/4 u(r_m)$ for every $r>r_m$
which gives again a contradiction. This completes the proof also for $p <2$.
\end{proof}

Now we can proceed with the proof of Theorem \ref{thm: rig} in the $p$-stochastically incomplete case. 

\begin{proof}[Proof of Theorem \ref{thm: rig} under \eqref{p-si}]
Suppose by contradiction that there exists a radial solution $u$ to \eqref{LE}, satisfying \eqref{hp grad}, on a Cartan-Hadamard model manifold complying with \eqref{p-si}. As in the first part of the proof, we infer that $u$ is a (classical) solution to \eqref{radial pb} for some $\alpha>0$. By combining the monotonicity of $u$ with Theorem \ref{thm: sc vs si}, we have that $0<\lambda := \lim_{r \to +\infty} u(r) <u(r)<\alpha$ for all $r>0$. Upon integrating the differential equation in \eqref{radial pb}, we deduce that
\beq \label{eq-integrated}
-\psi^{n-1}(r) \left|u'(r)\right|^{p-2} u'(r) = \int_0^r \psi^{n-1} \, u^q \, ds \quad \implies \quad \left|u'(r)\right|^p = \left(\psi^{1-n}(r) \int_0^r \psi^{n-1} \, u^q \, ds \right)^\frac{p}{p-1}
\eeq
for all $r>0$, where we used again the fact that $u'<0$. Upon multiplying by $\psi^{n-1}$, integrating and exploiting the monotonicity of both $ u $ and $ \psi $, we obtain:
\beq\label{8si}
\begin{aligned}
\int_0^{r} \left|u'\right|^p \psi^{n-1} \, ds & = \int_0^{r}  \left(\psi^{1-n}(s) \int_0^s \psi^{n-1} \, u^q \, dt \right)^\frac{p}{p-1} \psi^{n-1}(s) \, ds \\
& \ge \lambda^{\frac{qp}{p-1}} \int_0^{r}  \psi^\frac{(1-n)p}{p-1}(s) \left[ \frac{p-1}{2p-1} \left(\int_0^s \psi^{n-1} \, dt \right)^\frac{2p-1}{p-1} \right]' ds \\
& \ge \lambda^{\frac{qp}{p-1}} \,  \psi^\frac{(1-n)p}{p-1}(r) \int_0^{r} \left[ \frac{p-1}{2p-1} \left(\int_0^s \psi^{n-1} \, dt \right)^\frac{2p-1}{p-1} \right]' ds \\
& \ge \lambda^{\frac{qp}{p-1}} \, \frac{p-1}{2p-1} \left(\int_0^r \psi^{n-1} \right)^\frac{2p-1}{p-1} \psi^\frac{(1-n)p}{p-1}(r)  =  C \, \frac{f^\frac{2p-1}{p-1}(r)}{\left(f'(r)\right)^\frac{p}{p-1}}
\end{aligned}
\eeq
for all $ r>0 $, where $C>0$ is a constant whose explicit value is immaterial, and $f(r) := \int_0^r \psi^{n-1} \, ds $. We claim that 
\beq\label{7si}
\limsup_{r \to +\infty} \frac{f^\frac{2p-1}{p-1}(r)}{\left(f'(r)\right)^\frac{p}{p-1}} = +\infty \, .
\eeq
If not, then the ratio is bounded, which means that for all $r>1$
\[
f'(r) \ge C \left(f(r)\right)^\frac{2p-1}{p} \quad \text{with $f(1)>0$} \qquad \implies \qquad \text{$f$ blows up at a finite $ r_0>1 $} \, ,
\]
in contradiction with the definition of $f$. This proves that \eqref{7si} holds and, recalling \eqref{8si}, we can finally infer that 
\[
\int_0^{+\infty} \left|u'\right|^p \psi^{n-1} \, ds = +\infty \, ,
\]
which is incompatible with \eqref{hp grad}. That is, also under assumption \eqref{p-si} finite-energy solutions do not exist.
\end{proof}

\begin{remark}\label{rem: radial-non-model}
By reasoning similarly to \cite[Section 2.2]{KM}, it is not difficult to check that if $ \Mb^n $ is not a model manifold but still supports a radial solution to \eqref{LE}, then the latter is in fact a solution to \eqref{radial pb} with $ \psi $ replaced by
$$
\psi_\star(r) := \left[ \frac{\sigma(\partial B_r(o))}{n\, \omega_n} \right]^{\frac{1}{n-1}} \, ,
$$
and such a function falls within the Cartan-Hadamard class. Moreover, we have
$$
\int_{\Mb^n} \left|\nabla u\right|^p d{V} = n \, \omega_n \int_0^{+\infty} \left| u' \right|^p \psi_\star^{n-1} \, dr \, ,
$$ 
so that the proof of Theorem \ref{thm: rig} that we have just carried out  applies to this case as well. 
\end{remark}

\begin{proof}[Proof of Theorem \ref{thm: decay}-($ii$)]
By the definition of limit, for every $\eps>0$ (small enough) there exists $r_\eps>0$ such that $\lambda^q-\eps<u^q(r)< \lambda^q+\eps$ for all $r>r_\eps$. Therefore, in view of \eqref{eq-integrated}, for any such $r$ we have that 
\[
\left[ \psi^{1-n}(r) \left( C_{\eps}  + (\lambda^q-\eps) \int_{r_\eps}^{r} \psi^{n-1} \, ds \right)\right]^\frac{1}{p-1}  <-u'(r) <
\left[ \psi^{1-n}(r) \left( C_{\eps}  + (\lambda^q+\eps) \int_{r_\eps}^{r} \psi^{n-1} \, ds \right)\right]^\frac{1}{p-1} ,
\]
with $C_{\eps} := \int_0^{r_\eps} \psi^{n-1} \, u^q \, ds >0$. A further integration on $(r,+\infty)$ yields, still for $ r>r_\eps $, 
\begin{multline*}
 \,\int_r^{+\infty}\left[ \psi^{1-n}(s) \left( C_{\eps}  + (\lambda^q-\eps) \int_{r_\eps}^{s} \psi^{n-1} \, dt \right)\right]^\frac{1}{p-1} \,ds \\
  <\, u(r) - \lambda 
<  \, \int_r^{+\infty}\left[ \psi^{1-n}(s) \left( C_{\eps}  + (\lambda^q+\eps) \int_{r_\eps}^{s} \psi^{n-1} \, dt \right)\right]^\frac{1}{p-1} \,ds \, .
\end{multline*}
By the arbitrariness of $\eps>0$ and L'H\^opital's rule applied to the integral terms, it is not difficult to obtain the desired asymptotic result.

Finally, in order to prove \eqref{limit-lambda}, it is enough to observe that \eqref{24021} actually holds under both \eqref{p-sc} and \eqref{p-si}, whence the universal bound just follows upon letting $ r \to +\infty $ in such estimate.
\end{proof}

\end{document}